\providecommand{\tabularnewline}{\\}
\providecommand{\algorithmname}{Algorithm}
\theoremstyle{plain}
\newtheorem{thm}{\protect\theoremname}
\theoremstyle{plain}
\newtheorem{lem}[thm]{\protect\lemmaname}
\numberwithin{thm}{section}
\numberwithin{equation}{section} 
\newcommand{\ddt}[1]{\frac{\partial}{\partial t}#1}
\newcommand{\inp}[2][]{\left(#1,\, #2\right)}
\newcommand{\gnp}[2][]{\langle#1,\, #2\rangle}
\def\Tc{\mathcal{T}}
\def\Qc{\mathcal{Q}}
\def\X{\mathbb{X}}
\def\W{\mathbb{Q}}
\def\R{\mathbb{R}}
\def\M{\mathbb{M}}
\def\S{\mathbb{S}}
\def\N{\mathbb{N}}
\def\a{\alpha}
\def\s{\sigma}
\def\t{\tau}
\def\g{\gamma}
\def\Gi{\Gamma _{i}}
\def\lH{\lambda_H}
\def\LH{\Lambda_H}
\def\LHi{\Lambda_{H,i}}
\def\lHi{\lambda_{H,i}}
\def\lHu{\lambda_{H}^u}
\def\lHp{\lambda_{H}^p}
\def\m{\mu}
\def\dvr{\operatorname{div}}  % as a word
\def\dt{\partial_t}
\def\Quh{\Qc^{u}_h}
\def\Qph{\Qc^{p}_h}
\def\Qphi{\Qc^{p}_{h,i}}
\def\Omg{\Omega}
\def\O{\Omega}
\def\Oi{\Omega_i}
\def\G{\Gamma}
\def\Gn{\Gamma_N}
\def\Gd{\Gamma_D}
\def\dO{{\partial\Omega}}
\def\dOi{{\partial\Omega_i}}
\def\K{{K^{-1}}}
\newtheorem{remark}{Remark}[section]
\newtheorem{assump}{Assumption}
\def\sss{\sigma_{h,i}^{*,n+1}}
\def\zs{z_{h,i}^{*,n+1}}
\def\ps{p_{h,i}^{*,n+1}}
\def\usd{\dot{u}_{h,i}^{*,n+1}}
\def\gsd{\dot{\g}_{h,i}^{*,n+1}}
\def\sb{\bar{\sigma}_{h,i}^{n+1}}
\def\zb{\bar{z}_{h,i}^{n+1}}
\def\pb{\bar{p}_{h,i}^{n+1}}
\def\ubd{\bar{\dot{u}}_{h,i}^{n+1}}
\def\gbd{\bar{\dot{\g}}_{h,i}^{n+1}}
\def\fz{ \phi_{z}}
\def\fs{ \phi_{\sigma}}
\def\fg{ \phi_{\gamma}}
\def\fp{ \phi_{p}}
\def\fu{ \phi_{u}}
\def\flu{\phi_{\lambda^u}}
\def\flp{\phi_{\lambda^p}}
\def\sz{ \psi_{z}}
\def\ss{ \psi_{\sigma}}
\def\sg{ \psi_{\gamma}}
\def\sp{ \psi_{p}}
\def\su{ \psi_{u}}
\def\intg{\int_{0}^{t}}
\def\Ahalf{A^{\frac{1}{2}}}
\def\half{\frac{1}{2}}
\def\sumsubd{\sum_{i=1}^{N}}
\def\AHi{\mathcal{A}_{H,i}^{n+1}}
\def\AH{\mathcal{A}_{H}^{n+1}}
\def\GH{G_{H}^{n+1}}
\def\BHi{\mathcal{\beta}_{H,i}^{k}}
\def\PHi{\mathcal{\phi}_{H,i}^{k}}
\providecommand{\lemmaname}{Lemma}
\providecommand{\theoremname}{Theorem}
\begin{document}
\begin{comment}
Use AMS align environment for better looking equations. format: block1
(Equation) block2 (=RHS) block3 (function space)  (label)

For long system of equations to make equations start from left do
ams align evn with : block1 (empty) block2 (LHS=RHS) block3 (function
space) (label).
\end{comment}

\title{Multiscale mortar mixed finite element methods for the Biot system of poroelasticity}

\author{Manu Jayadharan\thanks{Engineering Sciences and Applied Mathematics, Northwestern University, Evanston, IL 60208, USA; {\tt manu.jayadharan@northwestern.edu}.} 
  \and Ivan Yotov\thanks{Department of Mathematics, University of
    Pittsburgh, Pittsburgh, PA 15260, USA; {\tt yotov@math.pitt.edu}.
This work is partially supported by NSF grants DMS 2111129 and DMS 2410686.}
}

\maketitle

\begin{abstract}
  We develop a mixed finite element domain decomposition method on non-matching grids for the Biot system of poroelasticity. A displacement--pressure vector mortar function is introduced on the interfaces and utilized as a Lagrange multiplier to impose weakly continuity of normal stress and normal velocity. The mortar space can be on a coarse scale, resulting in a multiscale approximation. We establish existence, uniqueness, stability, and error estimates for the semi-discrete continuous-in-time formulation under a suitable condition on the richness of the mortar space. We further consider a fully-discrete method based on the backward Euler time discretization and show that the solution of the algebraic system at each time step can be reduced to solving a positive definite interface problem for the composite mortar variable. A multiscale stress--flux basis is constructed, which makes the number of subdomain solves independent of the number of iterations required for the interface problem, and weakly dependent on the number of time steps. We present numerical experiments verifying the theoretical results and illustrating the multiscale capabilities of the method for a heterogeneous benchmark problem.
\end{abstract}

\section{Introduction}

In this paper we develop and study a domain decomposition method for the quasistatic Biot system of poroelasticity \cite{biot1941general} using mixed finite element subdomain discretizations with non-matching grids along the interfaces. The Biot system models flow of viscous fluids through deformable porous media. The system has a wide rage of applications, including in the geosciences, such as earthquakes, landslides, groundwater cleanup, and hydraulic fracturing, as well as in biomedicine, such as arterial flows and biological tissues. The model consists of an equilibrium equation for the solid medium coupled with a mass balance equation for the fluid flow through the medium. Various numerical methods for the Biot system have been developed in the literature, considering two-field displacement--pressure formulations \cite{Gaspar-FD-Biot,Jan-SINUM-Biot,Zik-MINI}, three-field displacement--pressure--Darcy velocity formulations \cite{phillips2007coupling1,Yi-Biot-locking,Zik-nonconf,Lee-Biot-three-field,phillips-DG,Whe-Xue-Yot-Biot}, three-field displacement--pressure--total pressure formulations
\cite{Lee-Mardal-Winther,ORB}, and four-field stress--displacement--pressure--Darcy velocity mixed formulations \cite{Yi-Biot-mixed,Ahmed-apost-CMAME}. In this work we consider the five-field weakly symmetric stress--displacement--rotation--pressure--Darcy velocity formulation \cite{Lee-Biot-five-field,msfmfe-Biot}. The four-field and five-field formulations lead to mixed finite element (MFE) approximations, which exhibit local mass and momentum conservation, accurate normal-continuous Darcy velocity and solid stress, as well as robust and locking-free behavior for a wide range of physical parameters. An additional advantage of the five-field MFE method is that it can be reduced to a positive definite cell-centered scheme for the pressure and displacement only, as it is done in the multipoint
stress--multipoint flux MFE method developed in \cite{msfmfe-Biot}, through the use of a vertex quadrature rule and local elimination of some of the variables. We note that our analysis for the weakly symmetric formulation can be carried over to the strongly symmetric formulation found in \cite{Yi-Biot-mixed}.

Numerical methods for the Biot system of poroelasticity usually lead to
large algebraic systems, due to the coupling of unknowns, as well as the size of the domain and the wide range of scales associated with practical applications. Domain decomposition methods \cite{Toselli-Widlund,QV} are commonly used for solving large systems resulting from discretizations of partial differential equations, as they lead to parallel and efficient solution algorithms. In this work we focus on non-overlapping domain decomposition methods, where the domain is split into non-overlapping subdomains and the continuity of the solution variables at the subdomain interfaces is enforced through a suitable interface Lagrange multiplier. The global problem can be reduced to solving iteratively an interface problem, involving the solution of smaller subdomain systems at each iteration, which can be performed in parallel. Despite the extensive studies of numerical methods for the Biot system of poroelasticity, there have been relatively few results on domain decomposition methods for this problem and they have been mostly based on the two-field displacement--pressure formulation \cite{girault2011domain,HoracioNurbs1,FlorezDD,GosseletMonoDD}. To the best of our knowledge, the only paper on domain decomposition for Biot with a mixed formulation is our previous work \cite{dd-biot}, which is based on the five-field mixed formulation with weak stress symmetry. Two types of methods are developed in \cite{dd-biot}. One is a monolithic domain decomposition method, which involves solving the Biot system on each subdomain. The second is a partitioned method, which splits the Biot system into solving separate elasticity and flow equations, and applies domain decomposition for each of the equations. The developments in \cite{dd-biot} are motivated by earlier works on non-overlapping domain decomposition methods
for MFE discretizations of Darcy flow \cite{glowinski1988domain,cowsar1995balancing,arbogast2000mixed}
and elasticity \cite{eldar_elastdd}.

The domain decomposition methods in \cite{dd-biot} are limited to subdomain grids that match at the interfaces. In this paper we generalize the work in \cite{dd-biot} to enable the use of non-matching subdomain grids through the use of mortar finite elements \cite{arbogast2000mixed,APWY,Fritz-mortar-elast,Kim-elast-BDDC,pencheva2003balancing,eldar_elastdd}. This generality provides the flexibility to use different grid resolution in different subdomains, as well as a coarser mortar space, resulting in a multiscale approximation. We refer to the method as a multiscale mortar mixed finite element (MMMFE) method. The MMMFE method has been studied for mixed formulations of scalar elliptic equations in \cite{arbogast2000mixed,APWY,ganis2009implementation} and for weakly-symmetric mixed elasticity in \cite{eldar_elastdd}. Following the monolithic domain decomposition method from \cite{dd-biot}, we utilize a physically heterogeneous Lagrange multiplier vector consisting of interface displacement and pressure variables to impose weakly the continuity of the normal components of stress and velocity, respectively. In contrast to \cite{dd-biot}, we choose the Lagrange multiplier vector from a space of mortar finite elements defined on a separate interface grid, which allows for handling non-matching subdomain grids through projections from and onto the mortar finite element space. This also allows for the mortar space to be on a coarser scale $H$, see \cite{PWY,APWY,ganis2009implementation}, compared to a finer subdomain
grid size $h$. The multiscale capability adds an extra layer of flexibility over the methods from \cite{dd-biot}. 

The main contributions of this paper are as follows. We first consider the semi-discrete continuous-in-time formulation and establish existence, uniqueness, and stability of the MMMFE method for the Biot system, employing the theory of degenerate evolutionary systems of partial differential equations with monotone operators. For the solvability of the associated resolvent problem, under a condition on the richness of the mortar finite element space, we establish
an inf-sup condition for the mortar space, as well as inf-sup conditions for the stress and velocity spaces with weak interface continuity of normal components. Next, we establish a priori error estimates for the stress, displacement, rotation, pressure, and Darcy velocity, as well as the displacement and pressure mortar variables in their natural norms. We then consider a fully-discrete method based on the backward Euler time discretization. We show that the solution of the algebraic system at each time step can be reduced to solving a positive definite interface problem for the composite displacement--pressure mortar variable. Motivated by the multiscale flux basis from \cite{ganis2009implementation} and the multiscale stress basis from \cite{eldar_elastdd}, we propose the construction and use of a multiscale stress--flux basis, which makes the number of subdomain solves independent of the number of iterations required for the interface problem. Moreover, since the basis can be reused at each time step, the total number of subdomain solves depends weakly on the number of time steps. This illustrates that the multiscale basis results in a significant reduction of computational cost in the case of time-dependent problems. Finally, we present the results of several numerical tests designed to illustrate the well-posedness, stability, and accuracy of the proposed MMMFE method. We also consider a test based on data from the Society of Petroleum Engineers SPE10 benchmark, illustrating the multiscale capabilities of the method and the advantages of using a multiscale basis.

The rest of the paper is organized as follows. Section~\ref{sec:Math-Formulation-mortar}
introduces the model problem and its domain decomposition mortar mixed finite element approximation. Various interpolation and projection operators are presented in Section~\ref{sec:interp}, where discrete inf-sup stability bounds are also obtained. 
Well-posedness of the semi-discrete method is established in Section~\ref{sec:Analysis-of-MMMFE}, followed by error analysis in Section~\ref{sec:error}. In Section \ref{sec:Implementation-Mortar} we discuss the fully discrete method, the non-overlapping domain decomposition algorithm based on a reduction to an interface problem, and the construction of the multiscale stress--flux basis. Numerical results are reported in Section~\ref{sec:Numerical-results-mortar}. The paper ends with some concluding remarks in Section~\ref{sec:concl}.

\section{Formulation of the method}
\label{sec:Math-Formulation-mortar}

In this section, we introduce the mathematical model of interest and its
mixed finite element approximation. We also develop the framework for the multiscale mortar mixed finite element domain decomposition method. Finally, we introduce the weakly continuous
normal stress and velocity spaces and reformulate the MMMFE method in terms of these spaces. 

\subsection{Mathematical formulation of the model problem}

Let $\Omega\subset\mathbb{R}^{d}$, $d=2,3$ be a simply connected
domain. We use the notation
$\M$, $\S$ and $\N$ for the spaces of $\ensuremath{d\times d}$
matrices, symmetric matrices, and skew-symmetric matrices, respectively,
over the field of real numbers. Let $I\in\S$ denote the $d\times d$
identity matrix. The partial derivative operator with respect to time, $\frac{\partial
}{\partial t}$, is often abbreviated to $\partial_t$. $C$ denotes a generic positive
constant that is independent of the discretization parameters
$h$ and $H$. Throughout the paper,
the divergence operator is the usual divergence for vector fields,
which produces vector field when applied to matrix field by taking
the divergence of each row. 

For a set $G\subset\R^{d},$ the $L^{2}(G)$ inner product and norm
are denoted by $ (\cdot,\cdot)_G$ and $\|\cdot\|_G$, respectively,
for scalar, vector, or tensor valued functions. For any real number $r$,
$\|\cdot\|_{r,G}$ denotes the $H^{r}(G)$-norm.
We omit subscript $\ensuremath{G}$ if $\ensuremath{G=\Omega}.$ For
a section of the domain or element boundary $S\subset\R^{d-1}$, we
write $\gnp[\cdot]{\cdot}_{S}$ and $\|\cdot\|_{S}$ for the $L^{2}(S)$
inner product (or duality pairing) and norm, respectively. We will
also use the spaces 
\begin{align*}
 & H(\dvr;G)=\{\zeta\in L^{2}(G,\R^{d}):\dvr\zeta\in L^{2}(G)\},\\
 & H(\dvr;G,\M)=\{\t\in L^{2}(G,\M):\dvr\t\in L^{2}(G,\R^{d})\},
\end{align*}
with the norm $\|\t\|_{\dvr,G}=\left(\|\t\|_G^{2}+\|\dvr\t\|_G^{2}\right)^{1/2}.$

Given a vector field $f$ representing body
forces and a source term $g$, we consider the quasi-static Biot system of poroelasticity \cite{biot1941general}:
\begin{align}
-\dvr\s(u) & =f, & \text{in \ensuremath{\Omega \times (0,T]}},\label{biot-1}\\
\K z+\nabla p & =0, & \text{in \ensuremath{\Omega \times (0,T]}},\label{biot-2}\\
\ddt(c_{0}p+\a\dvr u)+\dvr z & =g, & \text{in \ensuremath{\Omega \times (0,T]}},\label{biot-3}
\end{align}
where $u$ is the displacement, $p$ is the fluid pressure, $z$ is the 
Darcy velocity, and $\sigma$ is the poroelastic stress, defined as
\begin{equation}\label{stress-comb}
\sigma = \sigma_e - \alpha p I.
\end{equation}
Here $0 < \alpha \le 1$ is the
Biot-Willis constant, and $\sigma_e$ is the elastic stress satisfying the
stress-strain relationship
\begin{equation}\label{stress-strain}
A\s_e = \epsilon(u), \quad \epsilon(u):=\frac{1}{2}\left(\nabla u+(\nabla u)^{T}\right),
\end{equation}
where $A$ is the compliance tensor, which is a symmetric,
bounded and uniformly positive definite linear operator acting from
$\ensuremath{\S\to\S}$, extendible to $\M\to\M$. In particular, there exist constants
$0 < a_{\min} \le a_{\max} < \infty$ such that
\begin{equation}\label{eq:coercivity-elast-1}
  \text{for a.e. } x \in \Omg, \quad a_{\min}\, \tau:\tau \le A(x)\tau:\tau \le a_{\max}\, \tau:\tau, \quad
  \forall \tau \in \M.
\end{equation}
In the special case of homogeneous and isotropic body, $A$ is given by,
\begin{equation}
A\sigma_e = \frac{1}{2\mu}\left(\sigma_e - \frac{\lambda}{2\mu + d\lambda}\operatorname{tr}(\sigma_e)I\right),\label{A-defn}
\end{equation}
where $\mu>0$ and $\lambda\ge0$
are the Lam\'e coefficients. In this case, $\sigma_{e}(u)=2\mu\epsilon(u)+\lambda\dvr u\,I$.
Finally, $c_{0} > 0$ is the mass storativity and $K$ stands for the conductivity
tensor, which equals to the permeability of the media divided by the fluid viscosity. It is
spatially dependent, symmetric, and uniformly bounded and
positive definite, i.e, for constants $0 < k_{\min} \le k_{\max}
< \infty$,
\begin{equation}\label{eq:coercivity-flow-1}
\text{for a.e. } x \in \Omg, \quad k_{\min}\,\zeta\cdot\zeta \le K(x) \zeta\cdot\zeta \le k_{\max}\,\zeta\cdot \zeta,
\quad \forall \zeta \in \R^d.
\end{equation}
To close the system, we impose the boundary conditions 
\begin{align}
u & = 0 & \text{on \ensuremath{\Gd^u \times (0,T]}} & , & \s n & =0
& \text{on \ensuremath{\Gn^\sigma  \times (0,T]}},\label{biot-bc-1}\\
p & = 0 & \text{on \ensuremath{\Gd^p \times (0,T]}} & , & z\cdot n
& =0 & \text{on \ensuremath{\Gn^z \times (0,T]}},\label{biot-bc-2}
\end{align}
where $\ensuremath{\Gd^u\cup\Gn^\sigma=\Gd^p\cup\Gn^z=\partial\Omg}$ and 
$n$ is the outward unit normal vector field on $\partial \O$,
along with the initial condition $p(x,0)=p_0(x)$ in $\Omega$.
Compatible initial data for the rest of the variables can be obtained from
($\ref{biot-1}$) and ($\ref{biot-2}$) at $t=0$. 
Well posedness analysis for this system can be found in \cite{SHOWALTER2000310}.

We consider a five-field mixed variational formulation for ($\ref{biot-1}$)--($\ref{biot-bc-2}$)
\cite{Lee-Biot-five-field,msfmfe-Biot}. It uses a rotation
Lagrange multiplier $\gamma:=\frac{1}{2}\left(\nabla u-\nabla u^{T}\right)\in\N$
to impose weakly the symmetry of the stress tensor
$\s$. The formulation reads: find $\ensuremath{(\sigma,u,\gamma,z,p):[0,T]\to\X\times V\times\mathbb{Q}\times Z\times W}$
such that $p(0)=p_{0}$ and for a.e. $t\in(0,T),$
\begin{align}
& \inp[A(\sigma + \a p I)]{\tau} + \inp[u]{\dvr{\tau}}+\inp[\gamma]{\tau}=0, & \forall\tau\in\X,\label{eq:cts1-1}\\
& \inp[\dvr{\sigma}]{v}=-\inp[f]{v}, & \forall v\in V,\label{eq:cts2-1}\\
& \inp[\sigma]{\xi}=0, & \forall\xi\in\W,\label{eq:cts3-1}\\
& \inp[\K z]{q}-\inp[p]{\dvr{q}}=0, & \forall q\in Z,\label{eq:cts4-1}\\
& c_{0}\inp[\dt{p}]{w}+\a\inp[\dt A(\sigma + \a pI)]{wI}
+\inp[\dvr{z}]{w}=\inp[g]{w}, & \forall w\in W,\label{eq:cts5-1}
\end{align}
where
\begin{align*}
&\X = \big\{ \t\in H (\dvr;\Omega,\M) : \t\,n = 0 \text{ on } \Gn^\sigma  \big\},
\quad V = L^2 (\Omega, \R^d), \quad \W = L^2 (\Omega, \N), \\
&Z = \big\{ q\in H (\dvr;\Omega) : q\cdot n = 0 \text{ on } \Gn^z  \big\},
\quad W = L^2 (\Omega).
\end{align*}
It was shown in \cite{msfmfe-Biot} that the system
(\ref{eq:cts1-1})--(\ref{eq:cts5-1}) is well posed.

\subsection{The semi-discrete multiscale mortar mixed finite element method}\label{sec:method}

Let $\Omega=\cup_{i=1}^{N}\Omega_{i}$
be a union of non-overlapping polygonal subdomains.
Let $\G_{i,j}=\dO_{i}\cap\dO_{j}$, $\G=\cup_{i,j=1}^{N}\G_{i,j}$,
and $\G_{i}=\dO_{i}\cap\G=\dO_{i}\setminus\dO$. Let $\Tc_{h,i}$ be a shape regular simplicial or rectangular finite element partition on $\Omg_i$ with maximal
element diameter $h$. The partitions are not required to match along the subdomain interfaces. 
For $1\le i\le N$, let $\mathbb{X}_{h,i}\times V_{h,i}\times\mathbb{\mathbb{Q}}_{h,i}\times Z_{h,i}\times W_{h,i} \subset \mathbb{X}_{i}\times V_{i}\times\mathbb{Q}_{i}\times Z_{i}\times W_{i}$
be a family of suitable mixed finite element spaces defined on subdomain
$\Omega_{i}$, where, for a space $U$ on $\Omg$, $U_i = U|_{\Omg_i}$. The elasticity discretizations $\X_{h,i}\times V_{h,i}\times\mathbb{Q}_{h,i}$
can be chosen from any of the stable finite element triplets for linear elasticity with weakly imposed stress symmetry. Examples of such triplets include \cite{ArnAwaQiu,arnold2007mixed,Awanou-rect-weak,cockburn2010new,BBF-reduced,msmfe-quads,lee2016towards}. These spaces satisfy the inf-sup condition
\begin{align}
\forall v \in V_{h,i}, \, \xi \in \W_{h,i}, \quad
\|v\|_{\Omg_i} + \|\xi\|_{\Omg_i} & \le C \sup_{0\ne\tau\in\mathbb{X}_{h,i}} \frac{\inp[\dvr{\tau}]{v}_{\Omg_i} + \inp[\tau]{\xi}_{\Omg_i}}{\|\tau\|_{\text{div},\Omg_i}}. \label{eq:inf-sup-elast-1}
\end{align}
The flow discretizations $Z_{h}\times W_{h}$ can be chosen from any
of the stable pressure--velocity pairs of MFE spaces such as the Raviart-Thomas
($\mathcal{RT}$) or Brezzi-Douglas-Marini ($\mathcal{BDM}$) spaces \cite{brezzi1991mixed}.
These spaces satisfy the inf-sup condition
\begin{align}
\forall w \in W_h, \quad \|w\| & \le
C\sup_{0\ne q\in Z_{h,i}}\frac{(\dvr q,w)_{\Omg_i}}{\|q\|_{\text{div},\Omg_i}}\label{eq:inf-sup-darcy-1}.
\end{align}
We define the global finite element spaces on $\Omg$ as follows: 
\[
\mathbb{X}_{h}=\bigoplus_{1\le i\le N}\mathbb{X}_{h,i},\quad V_{h}=\bigoplus_{1\le i\le N}V_{h,i},\quad\mathbb{Q}_{h}=\bigoplus_{1\le i\le N}\mathbb{\mathbb{Q}}_{h,i},\quad Z_{h}=\bigoplus_{1\le i\le N}Z_{h,i},\quad W_{h}=\bigoplus_{1\le i\le N}W_{h,i}.
\]
The spaces $V_h$, $\mathbb{Q}_{h}$, and $W_h$ are equipped with $L^2(\Omg)$-norms. The spaces $\mathbb{X}_{h}$ and $Z_h$ are equipped with the norms
$$
\|\tau\|_{\X_h}^2 := \|\tau\|^2 + \|\dvr_h \tau\|^2 \quad \mbox{and} \quad
\|\zeta\|_{Z_h}^2 := \|\zeta\|^2 + \|\dvr_h \zeta\|^2,
$$
where for simplicity we define $\dvr_h \varphi |_{\Omega_i} := \dvr (\varphi|_{\Omega_i})$. We note that functions in $\mathbb{X}_{h}$ and $Z_{h}$ do not have continuity of the normal components across the subdomain interfaces. This discontinuity is addressed using Lagrange multipliers defined
on mortar spaces on the interface $\Gamma$, which approximate the traces of the displacement vector and the pressure. The mortar spaces satisfy suitable coarseness conditions, which will be discussed in the later sections. Let $\Tc_{H,i,j}$ be a shape regular 
finite element partition of $\Gamma_{i,j}$ consisting of simplices or 
quadrilaterals in $d-1$ dimensions with maximal element diameter
$H$. Let $\Lambda_{H,i,j}^{u}\subset L^{2}(\Gamma_{i,j};\R^d)$
and $\Lambda_{H,i,j}^{p}\subset L^{2}(\Gamma_{i,j})$ be mortar finite
element spaces on $\Gamma_{i,j}$ representing the displacement and
pressure Lagrange multipliers, respectively. We assume that these
mortar spaces contain either continuous or discontinuous piecewise polynomials on $\Tc_{H,i,j}$. The global mortar finite element space on the union of subdomain interfaces $\Gamma$ is defined as
\[
\Lambda_{H}^{u}=\bigoplus_{1\le i<j\le N}\Lambda_{H,i,j}^{u}, \quad \Lambda_{H}^{p}=\bigoplus_{1\le i<j\le N}\Lambda_{H,i,j}^{p}.
\]
 
The semi-discrete multiscale mortar mixed finite element method for the 
Biot problem \eqref{eq:cts1-1}--\eqref{eq:cts5-1} is obtained by testing the equations on each subdomain and integrating by parts, which results in interface terms involving the displacement and pressure Lagrange multipliers. The method reads as follows: find $(\sigma_{h},u_{h},\g_{h},z_{h},p_{h},\lambda_{H}^u,\lambda_H^p):[0,T]\to\X_{h}\times V_{h}\times\mathbb{Q}_{h}\times Z_{h}\times W_{h}\times\Lambda_{H}^u\times\Lambda_H^p$
such that for a.e. $t\in(0,T)$,
\begin{align}
  & \inp[A\left(\sigma_{h}+\a p_{h}I\right)]{\tau} + \inp[u_{h}]{\dvr_h{\tau}} + \inp[\gamma_{h}]{\tau}
  - \sum_{i=1}^N \gnp[\lambda_{H}^{u}]{\t\,n_{i}}_{\Gamma_{i}} = 0, &  & \forall\tau\in\X_{h},\label{eq:monodd-mmmfe1}\\
& \inp[\dvr_h{\sigma_{h}}]{v} = -\inp[f]{v}, &  & \forall v\in V_{h},\label{eq:monodd-mmmfe2}\\
& \inp[\sigma_{h}]{\xi} = 0, &  & \forall\xi\in\mathbb{Q}_{h},\label{eq:monodd-mmmfe3}\\
& \inp[\K z_{h}]{\zeta} -\inp[p_{h}]{\dvr_h{\zeta}} +
\sum_{i=1}^N \gnp[\lambda_{H}^{p}]{\zeta\cdot n_{i}}_{\Gamma_{i}} = 0, &  & \forall\zeta\in Z_{h},\label{eq:monodd-mmmfe4}\\
& c_{0}\inp[\dt p_{h}]{w} + \a\inp[\dt A\left(\sigma_{h} + \a p_{h}I\right)]{wI} + \inp[\dvr_h{z_{h}}]{w} = \inp[g]{w}, &  & \forall w \in W_{h},\label{eq:monodd-mmmfe5}\\
& \sum_{i=1}^{N}\gnp[\sigma_{h,i}\,n_{i}]{\mu^{u}}_{\G_{i}}=0, &  & \forall\mu^{u}\in\Lambda_{H}^{u},\label{eq:monodd-mmmfe6}\\
& \sum_{i=1}^{N}\gnp[z_{h,i}\cdot n_{i}]{\mu^{p}}_{\G_{i}}=0, &  & \forall\mu^{p}\in\Lambda_{H}^{p},\label{eq:monodd-mmmfe7}
\end{align}
where $n_{i}$ is the outward unit normal vector field on $\Omega_{i}$.
Note that equations (\ref{eq:monodd-mmmfe6})$\--$(\ref{eq:monodd-mmmfe7})
enforce weak continuity of the normal components of the stress
tensor and velocity vector, respectively, across the interface $\Gamma$.

\begin{remark}
The method requires discrete initial data $p_{h,0}$ and $\sigma_{h,0}$, which is obtained from the continuous initial data using an elliptic projection. Details are provided in Section~\ref{sec:well-posed}.
\end{remark}

\subsection{Weakly continuous normal stress and velocity formulation}

For the purpose of the analysis, we consider an equivalent formulation of \eqref{eq:monodd-mmmfe1}--\eqref{eq:monodd-mmmfe7} in the spaces of stress and velocity with weakly continuous normal components. Let
\[
\X_{h,0}=\left\{ \tau\in\X_{h}:\sum_{i=1}^{N}\gnp[\tau n_{i}]{\mu^{u}}_{\G_{i}}=0,\,\,\,\forall\mu^{u}\in\Lambda_{H}^{u}\right\} 
\]
and 
\[
Z_{h,0}=\left\{ \zeta\in Z_{h}:\sum_{i=1}^{N}\gnp[\zeta\cdot n_{i}]{\mu^{p}}_{\G_{i}}=0,\,\,\,\forall\mu^{p}\in\Lambda_{H}^{p}\right\} .
\]
Then \eqref{eq:monodd-mmmfe1}--\eqref{eq:monodd-mmmfe7} can be restated as follows:
find $\ensuremath{(\sigma_{h},u_{h},\gamma_{h},z_{h},p_{h})}:[0,T]\to \left(\X_{h,0},V_h,\mathbb{Q}_{h},Z_{h,0},W_h \right)$
such that
\begin{align}
& \inp[A\left(\sigma_{h}+\a p_{h}I\right)]{\tau}+\inp[u_{h}]{\dvr_h{\tau}}_{\Omega_{i}}+\inp[\gamma_{h}]{\tau}=0, & \forall\tau\in\X_{h,0},\label{eq:fe-mono-weak-1}\\
& \inp[\dvr_h{\sigma_{h}}]{v}_{\Omega_{i}}=-\inp[f]{v}{}, & \forall v\in V_{h},\label{eq:fe-mono-weak-2}\\
& \inp[\sigma_{h}]{\xi}=0, & \forall\xi\in\mathbb{Q}_{h},\label{eq:fe-mono-weak-3}\\
& \inp[\K z_{h}]{\zeta}-\inp[p_{h}]{\dvr_h{\zeta}}_{\Omega_{i}}=0, & \forall\zeta\in Z_{h,0},\label{eq:fe-mono-weak-4}\\
& c_{0}\inp[\dt p_{h}]{w}+\a\inp[\dt A\left(\sigma_{h}+\a p_{h}I\right)]{wI}+\inp[\dvr_h{z_{h}}]{w}{}_{\Omega_{i}}=\inp[g]{w}, & \forall w\in W_{h}.\label{eq:fe-mono-weak-5}
\end{align}
Note that constructing basis functions for the spaces $\X_{h,0}$
and $Z_{h,0}$ is difficult and we use the above formulation only
for the sake of analysis.

\section{Interpolation and projection operators and discrete inf-sup conditions}
\label{sec:interp}

In this section we discussing various interpolation and projection operators useful in the analysis. We then prove inf-sup stability bounds for the interface jump operators and the weakly continuous stress $\X_{h,0}$ and velocity $Z_{h,0}$ spaces under an appropriate condition on the mortar space $\LH$.

\subsection{Interpolation operators}

Let $\mathcal{Q}_{h,i}^{u}: L^{2}(\dOi,\R^d)to\X_{h,i}n_{i}$
and $\mathcal{Q}_{h,i}^{p}:L^{2}(\dOi)\to Z_{h,i}\cdot n_{i}$
be the $L^2$-projection operators such that for any $\phi_{u}\in L^{2}(\dOi,\R^d)$
and $\phi_{p}\in L^{2}(\dOi)$,

\begin{align}
& \left<\phi_{u}-\mathcal{Q}_{h,i}^{u}\phi_{u},\,\tau n_{i}\right>_{\dOi}=0, & \forall\tau\in\X_{h,i},\label{eq:motor-project-1}\\
& \left<\phi_{p}-\mathcal{Q}_{h,i}^{p}\phi_{p},\,\zeta\cdot n_{i}\right>_{\dOi}=0, & \forall\zeta\in Z_{h,i}.\label{eq:motor-project-2}
\end{align}
For any inf-sup stable pair of finite element spaces $\X_{h,i}\times V_{h,i}$
with $\text{div\,}\X_{h,i}=V_{h,i}$, there exists a mixed canonical
interpolant \cite{brezzi1991mixed}, $\Pi_{i}^{\s}:H^{\epsilon}(\Omega_{i},\M)\cap\X_{i}\to\X_{h,i}$,
for any $\epsilon>0$, such that for any $\tau\in H^{\epsilon}(\Omega_{i},\M)\cap\X_{h,i}$,
\begin{align}
& \left(\text{div\,}(\Pi_{i}^{\s}\tau-\tau),\,v\right)_{\Oi}=0, & \forall v\in V_{h,i},\label{eq:elast-project-1}\\
& \left<(\Pi_{i}^{\s}\tau-\tau)n_{i},\,\hat{\tau} n_{i}\right>_{\Gi}=0, & \forall\hat{\tau}\in\X_{h,i},\label{eq:elast-project-2}\\
& \|\Pi_{i}^{\s}\tau\|_{\dvr,\Oi}\le C\left(\|\tau\|_{H^{\epsilon}(\Oi)}+\|\dvr\tau\|_{\Oi}\right).
\end{align}
Similarly for any inf-sup stable pair $Z_{h,i}\times W_{h,i}$ with
$\text{div }Z_{h,i}=W_{h,i}$, there exists a mixed canonical interpolant
$\Pi_{i}^{z}: H^{\epsilon}(\Omega_{i},\R^d) \cap Z_{i}\to Z_{h,i}$
such that for any $\zeta\in H^{\epsilon}(\Omega_{i},\R^d)\cap Z_{i}$,
\begin{align}
& \left(\text{div\,}(\Pi_{i}^{z}\zeta-\zeta),\,w\right)_{\Oi}=0, & \forall w\in W_{h,i},\label{eq:darcy-project-1}\\
& \langle(\Pi_i^{z}\zeta-\zeta)\cdot n_i,\,\hat{\zeta}\cdot n_i\rangle_{\Gi}=0, & \forall\hat{\zeta}\in Z_{h,i},\label{eq:darcy-project-2}\\
& \|\Pi_i^{z}\zeta\|_{\dvr,\Oi}\le C\left(\|\zeta\|_{H^{\epsilon}(\Oi)}+\|\dvr\zeta\|_{\Oi}\right).\label{eq:darcy-project-3}
\end{align}
Let ${\cal P}_{h,i}^{p}:L^{2}(\Oi)\to W_{h,i}$ denote the $L^2$-orthogonal projection
such that for any $w\in L^{2}(\Oi)$,
\begin{align}
& \left({\cal P}_{h,i}^{p}w-w,\,\hat{w}\right)_{\Oi}=0, & \forall\hat{w}\in W_{h,i}.\label{eq:pressure-project}
\end{align}
Let ${\cal P}_{h,i}^{u}: L^{2}(\Oi,\R^d)\to V_{h,i}$ denote the $L^2$-orthogonal projection such that for any $v\in L^{2}(\Oi,\R^d)$, 
\begin{align}
& \left({\cal P}_{h,i}^{u}v-v,\,\hat{v}\right)_{\Oi}=0, & \forall\hat{v}\in V_{h,i}.\label{eq:disp-project-1}
\end{align}
Let ${\cal R}_{h,i}:L^{2}(\Oi,\mathbb{N})\to\mathbb{Q}_{h,i}$ denote the $L^2$-orthogonal projection such that for any $\xi\in L^{2}(\Oi,\mathbb{N})$,
\begin{align}
& \left({\cal \cal R}_{h,i}\xi-\xi,\,\hat{\xi}\right)_{\Oi}=0, & \forall\hat{\xi}\in\mathbb{Q}_{h,i}.\label{eq:disp-project-1-1}
\end{align}
We will use an elliptic projection operator onto $\X_{h,i}$ \cite{eldar_elastdd}. Define $\hat{\Pi}_{i}^{\s}:H^{\epsilon}(\Omega_{i},\M)\cap\X_{i}\to\X_{h,i}$ as the
operator that takes $\s\in H^{\epsilon}(\Omega_{i},\M)\cap\X_{i}$ to its finite element
approximation $\hat{\sigma}$ via the solution of the following Neumann problem: for any $\sigma\in H^{\epsilon}(\Omega_{i},\M)$, find $(\hat{\sigma},\hat{u},\hat{\g})\in\X_{h,i}\times V_{h,i}\times\mathbb{Q}_{h,i}$
such that 
\begin{align}
  & \inp[\hat{\sigma}]{\tau}_{\Omega_{i}}+\inp[\hat{u}]{\dvr{\tau}}_{\Omega_{i}}
  + \inp[\hat{\gamma}]{\tau}_{\Omega_{i}}=\inp[\s]{\tau}_{\Oi}, &  & \forall\tau\in\X^0_{h,i},
 \label{ell-proj1} \\
& \inp[\dvr{\hat{\sigma}}]{v}_{\Omega_{i}}=\inp[\dvr{\s}]{v}_{\Omega_{i}}, &  & \forall v\in V_{h,i}, \label{ell-proj2} \\
& \inp[\hat{\sigma}]{\xi}_{\Omega_{i}}=\inp[\sigma]{\xi}_{\Omega_{i}}, &  & \forall\xi\in\mathbb{Q}_{h,i}, \label{ell-proj3} \\
& \hat{\sigma}n_{i}=(\Pi_{i}^{\s}\sigma)n_{i}\,\,\,\,\text{on \ensuremath{\dO_{i}}}, \label{ell-proj4} 
\end{align}
where $\X^0_{h,i} = \left\{\tau\in \X_{h,i}: \tau n_i = 0 \mbox{ on } \dO_{i}\right\}$. More details on the well-posedness and properties of $\hat{\Pi}_{i}^{\s}$ can be found in \cite{eldar_elastdd}. In particular, the following bounds hold:
\begin{align*}
& \|\s-\hat{\Pi}_{i}^{\s}\s\|_{\Oi}\le C\|\s-\Pi_{i}\s\|_{\Oi}, & \s\in H^{1}\left(\Oi,\mathbb{M}\right),\\
& \|\hat{\Pi}_{i}^{\s}\s\|_{\dvr,\Oi}\le C\left(\|\s\|_{H^{\epsilon}(\Oi)}+\|\dvr\s\|_{\Oi}\right). & \s\in H^{\epsilon}(\Oi,\mathbb{M})\cap\X_{i},\,0<\epsilon\le1.
\end{align*}

We also use the Scott-Zhang interpolants (see \cite{Scott-Zhang}) ${\cal I}_{H}^{u}: H^1(\Gamma) \to \Lambda_{H}^{u} \cap C(\Gamma)$ and ${\cal I}_{H}^{p}: H^1(\Gamma) \to \Lambda_{H}^{p} \cap C(\Gamma)$, defined to preserve the trace on $\partial \Gamma$ for functions that are zero on $\partial \Gamma$. 

Let the finite element spaces $\mathbb{X}_{h,i},\,V_{h,i},\,\mathbb{Q}_{h,i},\,Z_{h,i}$,
$W_{h,i}$, and $\Lambda_{H,i,j}$ contain polynomials of degree less than or equal to
$k\ge1,\,l\ge0,\,j\ge0,\,r\ge0$, $s\ge0$, and $m \ge 0$, respectively.
The operators defined above satisfy the following approximation bounds:
\begin{align}
  \|\psi-{\cal I}_{H}^{u}\psi\|_{t,\Gamma_{i,j}} & \le CH^{\hat{m}-t}\|\psi\|_{\hat{m},\Gamma_{i,j}}, &
  0\le t\le1,  
  \,\,
t\le\hat{m}\le m+1,
  \label{eq:inter-proj-1}\\
  \|\psi-{\cal I}_{H}^{p}\psi\|_{t,\Gamma_{i,j}} & \le CH^{\hat{m}-t}\|\psi\|_{\hat{m},\Gamma_{i,j}}, &
  0\le t\le1,  
  \,\,
t\le\hat{m}\le m+1,
  \label{eq:inter-proj-1a}\\
\|v-{\cal P}_{h,i}^{u}v\|_{\Omega_{i}} & \le Ch^{\hat{l}}\|v\|_{\hat{l},\Oi}, & 0\le\hat{l}\le l+1,\label{eq:inter-proj-3}\\
\|\zeta-{\cal P}_{h,i}^{p}\zeta\|_{\Omega_{i}} & \le Ch^{\hat{s}}\|\zeta\|_{\hat{s},\Oi}, & 0\le\hat{s}\le s+1,\label{eq:inter-proj-4}\\
\|\xi-{\cal R}_{h,i}\xi\|_{\Oi} & \le Ch^{\hat{j}}\|\xi\|_{\hat{j},\Oi}, & 0\le\hat{j}\le j+1,\label{eq:inter-proj-6}\\
\|\psi-{\cal Q}_{h,i}^{u}\psi\|_{\Gamma_{i,j}} & \le Ch^{\hat{k}+t}\|\psi\|_{\hat{k},\Gamma_{i,j}}, & 0\le\hat{k}\le k+1,
\label{eq:inter-proj-8}\\
\|\psi-{\cal Q}_{h,i}^{p}\psi\|_{\Gamma_{i,j}} & \le Ch^{\hat{r}+t}\|\psi\|_{\hat{r},\Gamma_{i,j}}, & 0\le\hat{r}\le r+1,
\label{eq:inter-proj-9}\\
\|\tau-\hat{\Pi}_{i}^{\s}\tau\|_{\Omega_{i}} & \le Ch^{\hat{k}}\|\tau\|_{\hat{k},\Oi}, & 0<\hat{k}\le k+1,\label{eq:inter-proj-14}\\
\|\zeta-\Pi_{i}^{z}\zeta\|_{\Omega_{i}} & \le Ch^{\hat{r}}\|\zeta\|_{\hat{r},\Oi}, & 0<\hat{r}\le r+1,\label{eq:inter-proj-15}\\
\|\text{div}(\tau-\hat{\Pi}_{i}^{\s}\tau)\|_{\Omega_{i}} & \le Ch^{\hat{l}}\|\text{div\,}\tau\|_{\hat{l},\Oi}, & 0\le\hat{l}\le l+1,\label{eq:inter-proj-5}\\
\|\text{div}(\zeta-\Pi_{i}^{z}\zeta)\|_{\Omega_{i}} & \le Ch^{\hat{s}}\|\text{div\,}\tau\|_{\hat{s},\Oi}, & 0\le\hat{s}\le s+1,\label{eq:inter-proj-11}
\end{align}
where the functions $\psi,\,v,\,\zeta,\,\tau,$ and $\xi$ are taken
from the domains of the operators acting on them. Bound (\ref{eq:inter-proj-1})
can be found in \cite{Scott-Zhang}, bounds (\ref{eq:inter-proj-3})$\--$(\ref{eq:inter-proj-9}) and (\ref{eq:inter-proj-5})$\--$(\ref{eq:inter-proj-11}) are standard $L^2$-projection approximation bounds \cite{ciarlet2002finite},
and bounds (\ref{eq:inter-proj-14})$\--$(\ref{eq:inter-proj-15})
can be found in \cite{brezzi1991mixed,roberts1991mixed,eldar_elastdd}.

We will also use the trace inequalities
\begin{align}
& \|\psi\|_{t,\Gamma_{i,j}} \le C\|\psi\|_{t+\frac{1}{2},\Oi}, \quad t>0,\label{eq:trace-1}\\
& \left\langle \psi,\tau n\right\rangle _{\dOi} \le C\|\psi\|_{\frac{1}{2},\dOi}\|\tau\|_{H(\text{div};\Oi)},
\quad 
\left\langle \psi,\zeta\cdot n\right\rangle _{\dOi} \le C\|\psi\|_{\frac{1}{2},\dOi}\|\zeta\|_{H(\text{div};\Oi)},
\label{eq:trace-2}
\end{align}
which can be found in \cite{grisvard2011elliptic} and \cite{brezzi1991mixed,roberts1991mixed},
respectively. 

Finally, define the projection operators $\hat{\Pi}^{\s},\,\Pi^{z},\,{\cal P}_{h}^{p},\,{\cal P}_{h}^{u},\,{\cal \,R}_{h},{\cal Q}^u_h$, and ${\cal Q}^p_h$
on the respective spaces defined in the global domain $\Omg$ to be the
piece-wise application of $\hat{\Pi}_{i}^{\s},\,\Pi_{i}^{z},\,{\cal P}_{h,i}^{p},\,{\cal P}_{h,i}^{p},\,{\cal R}_{h,i},\,{\cal Q}^u_{h,i}$, and ${\cal Q}^p_{h,i}$,
respectively, on subdomains $\Omega_{i}$ for $i=1,\dots,N$. 

\subsection{Discrete inf-sup conditions \label{subsec:Existence-of-inf-sup}}

In this subsection we give inf-sup stability bounds for the mortar space $\LH$ and the weakly continuous stress $\X_{h,0}$ and velocity $Z_{h,0}$ spaces under a coarseness 
condition on the mortar space $\LH$.

\begin{assump} 
	The mortar space $\Lambda_{H}$ is chosen so that there exists a positive
	constant $C$ independent of $H$ and $h$ such that
	\begin{equation}
	\|\mu^\star\|_{\Gamma_{i,j}}\le C\left(\|\mathcal{Q}_{h,i}^\star\mu^\star\|_{\Gamma_{i,j}} + \|\mathcal{Q}_{h,j}^\star\mu^\star\|_{\Gamma_{i,j}}\right),\quad\forall\mu^\star\in\Lambda_{H}^\star,\quad1\le i<j\le n, \ \ \star \in \{p,u\}.\label{eq:mortar_assumption}
	\end{equation}	
\end{assump}

\begin{remark}\label{rem:mortar_assum} 
Assumption \eqref{eq:mortar_assumption}, which was first introduced in \cite{APWY},
implies that the mortar space $\Lambda_{H}$ cannot be too rich compared to the normal traces of the subdomain stress/velocity spaces. In practice, this condition can be satisfied by taking a coarser mortar mesh, see \cite{APWY,arbogast2000mixed,pencheva2003balancing}.	
\end{remark}

\begin{lem}[Pressure mortar inf-sup condition]\label{lem:mortar-press}
	Under assumption \eqref{eq:mortar_assumption}, there exists a
	constant $\beta_{D}>0$, independent of $h$ and $H$ such that for
	any $\mu^{p}\in\Lambda_{H}^{p}$, 
	\begin{equation}
	\|\mu^{p}\|_{\G}\le\beta_{D}\sup_{0\ne\zeta\in Z_{h}}\frac{\sum_{i=1}^{N}\gnp[\zeta\cdot n_{i}]{\mu^{p}}_{\G_{i}}}{\|\zeta\|_{Z_{h}}}.\label{eq:mortar-p-inf-sup}
	\end{equation}
\end{lem}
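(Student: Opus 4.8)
The plan is to construct, for a given mortar pressure $\mu^p \in \Lambda_H^p$, a velocity field $\zeta \in Z_h$ that makes the supremum in \eqref{eq:mortar-p-inf-sup} bounded below by $C\|\mu^p\|_\G$, using Assumption \eqref{eq:mortar_assumption} to convert between mortar norms and the norms of the projected traces $\Qc_{h,i}^p \mu^p$. First I would recall from the standard theory of mixed finite element methods — specifically the inf-sup condition \eqref{eq:inf-sup-darcy-1} together with surjectivity of the normal-trace map — that for each subdomain $\Omega_i$ and each prescribed boundary datum $\phi \in Z_{h,i}\cdot n_i$ on $\dOi$ (extended by zero outside $\Gamma_i$), there exists $\zeta_i \in Z_{h,i}$ with $\zeta_i \cdot n_i = \phi$ on $\dOi$ and $\|\zeta_i\|_{Z_h,\Omega_i} \le C\|\phi\|_{\Gamma_i}$; this is a local construction requiring no compatibility across interfaces. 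I would apply this on each subdomain adjacent to an interface $\Gamma_{i,j}$ with the datum $\phi = \Qc_{h,i}^p \mu^p$ on $\Gamma_{i,j}$ (summed over all neighbors $j$, with zero on $\partial \Omega_i \setminus \Gamma_i$), obtaining a global $\zeta = \bigoplus_i \zeta_i \in Z_h$.

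Next I would evaluate the interface form against this $\zeta$. Using the definition \eqref{eq:motor-project-2} of the $L^2$-projection $\Qc_{h,i}^p$ and the fact that $\zeta_i \cdot n_i \in Z_{h,i}\cdot n_i$, we have $\gnp[\zeta_i \cdot n_i]{\mu^p}_{\Gamma_{i,j}} = \gnp[\zeta_i \cdot n_i]{\Qc_{h,i}^p\mu^p}_{\Gamma_{i,j}} = \|\Qc_{h,i}^p\mu^p\|_{\Gamma_{i,j}}^2$ by our choice of the boundary datum. Summing over all subdomains and interface pieces gives
\[
\sum_{i=1}^N \gnp[\zeta\cdot n_i]{\mu^p}_{\Gamma_i} = \sum_{1\le i<j\le N}\left(\|\Qc_{h,i}^p\mu^p\|_{\Gamma_{i,j}}^2 + \|\Qc_{h,j}^p\mu^p\|_{\Gamma_{i,j}}^2\right) \ge C\sum_{1\le i<j\le N}\|\mu^p\|_{\Gamma_{i,j}}^2 = C\|\mu^p\|_\G^2,
\]
where the inequality is exactly Assumption \eqref{eq:mortar_assumption} (squared and rearranged). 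For the denominator, the stability bound from the local constructions yields $\|\zeta\|_{Z_h}^2 = \sum_i \|\zeta_i\|_{Z_h,\Omega_i}^2 \le C\sum_{i<j}\left(\|\Qc_{h,i}^p\mu^p\|_{\Gamma_{i,j}}^2 + \|\Qc_{h,j}^p\mu^p\|_{\Gamma_{i,j}}^2\right) \le C\|\mu^p\|_\G^2$, using the $L^2$-stability of the projections $\Qc_{h,i}^p$. Combining, the supremum is at least $C\|\mu^p\|_\G^2 / (C\|\mu^p\|_\G) = C\|\mu^p\|_\G$, which gives \eqref{eq:mortar-p-inf-sup} with $\beta_D$ the reciprocal of the resulting constant.

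The main obstacle is the local lifting step: producing $\zeta_i \in Z_{h,i}$ with a prescribed discrete normal trace on $\dOi$ and a norm controlled by the trace norm. This requires care because the inf-sup condition \eqref{eq:inf-sup-darcy-1} as stated controls only the interior divergence pairing, not the boundary trace. The standard remedy is to use the extended (Neumann-type) inf-sup property of the pair $Z_{h,i}\times W_{h,i}$ on the subdomain — equivalently, to invoke surjectivity of the operator $\zeta \mapsto (\dvr\zeta, \zeta\cdot n_i|_{\dOi})$ from $Z_{h,i}$ onto $W_{h,i}\times (Z_{h,i}\cdot n_i)$ — which holds for $\RT$ and $\BDM$ spaces and is a well-documented consequence of their construction via degrees of freedom; see \cite{brezzi1991mixed}. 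An alternative that avoids any appeal beyond what is stated is to note that one may first solve a local Dirichlet problem to produce a field with the right normal trace and then correct the divergence, but this is more laborious; I would instead cite the standard extended inf-sup result. A second, more technical point is that the datum $\Qc_{h,i}^p\mu^p$ must be extended by zero onto $\partial\Omega_i \cap \partial\Omega$ (where the essential condition $\zeta\cdot n = 0$ lives) — this is compatible because the mortar variable is only defined on $\Gamma_i$ and the zero extension lies in the trace space $Z_{h,i}\cdot n_i$ precisely when the spaces are built to allow inhomogeneous Neumann data on all of $\dOi$, which $\RT$ and $\BDM$ do. With these ingredients in place the remaining algebra is routine.
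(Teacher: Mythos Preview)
Your proposal is correct and follows essentially the same strategy as the paper: construct on each subdomain a velocity $\zeta_i\in Z_{h,i}$ whose normal trace on $\Gamma_i$ equals $\Qc_{h,i}^p\mu^p$, then use Assumption~\eqref{eq:mortar_assumption} to bound the numerator below by $C\|\mu^p\|_\Gamma^2$ and the $L^2$-stability of $\Qc_{h,i}^p$ to bound the denominator above.

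The only substantive difference is in how the discrete trace lifting with $\|\zeta_i\|_{\dvr,\Omega_i}\le C\|\Qc_{h,i}^p\mu^p\|_{\Gamma_i}$ is obtained. You invoke it as a known black-box property of $\RT$/$\BDM$ spaces; the paper constructs it explicitly by solving an auxiliary Neumann problem $\Delta\phi_i=\overline{\Qc_{h,i}^p\mu^p}$ in $\Omega_i$, $\nabla\phi_i\cdot n_i=\Qc_{h,i}^p\mu^p$ on $\partial\Omega_i$, invoking $H^{3/2}$ elliptic regularity, and then setting $\zeta_i=\Pi_i^z\nabla\phi_i$. This is in fact the standard way to establish the lifting you cite---note that your ``degrees of freedom'' argument alone gives surjectivity but not the $h$-uniform $H(\dvr)$ bound, since a naive extension by boundary DOFs yields $\|\dvr\zeta_i\|\sim h^{-1/2}\|\phi\|_{\Gamma_i}$. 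The paper's route buys a self-contained proof; yours is more concise once the lifting is taken as known.
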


\begin{proof}
	We start with any $\mu^{p}\in\Lambda_{H}^{p}$ and extend it by zero
	on $\dO$. Let $\phi_{i}$ be the solution to the following auxiliary
	problem 
	\begin{align}
	& \dvr\nabla\phi_{i}=\overline{\Qphi\mu^{p}}, & \text{in }\Oi,\label{eq:m-inf-sup-1}\\
	& \nabla\phi_{i}\cdot n_{i}=\Qphi\mu^{p}, & \text{on }\dOi,\label{eq:m-inf-sup-2}
	\end{align}
	where $\overline{\Qphi\mu^{p}}$ denotes the mean value of $\Qphi\mu^{p}$
	on $\dOi$. Let $\psi_{i}=\nabla\phi_{i}$. The elliptic
	problem \eqref{eq:m-inf-sup-1}--\eqref{eq:m-inf-sup-2} is well-posed and its solution satisfies the elliptic regularity bound \cite{grisvard2011elliptic}
\begin{equation}
	\|\psi_{i}\|_{1/2,\Oi}+\|\dvr\psi\|_{\Oi}\le C\|\Qphi\mu^{p}\|_{\dOi}.\label{eq:m-inf-sup-3}
\end{equation}
Take $\zeta_{h,i}=\Pi_i^{z}\psi_{i}\in Z_{h,i}$. Using \eqref{eq:darcy-project-2},
\eqref{eq:m-inf-sup-2}, and \eqref{eq:motor-project-2}, we obtain
\begin{align}
  \gnp[\zeta_{h,i}\cdot n_{i}]{\mu^{p}}_{\G_{i}} & = \gnp[\Pi^{z}\psi_{i}\cdot n_{i}]{\mu^{p}}_{\G_{i}}
  = \gnp[\Pi^{z}\psi_{i}\cdot n_{i}]{\Qphi\mu^{p}}_{\G_{i}}\nonumber \\
  & 
= \gnp[\psi_{i}\cdot n_{i}]{\Qphi\mu^{p}}_{\G_{i}}
  =\gnp[\Qphi\mu^{p}]{\Qphi\mu^{p}}_{\G_{i}}\ge C \|\mu^p\|^2_{\Gamma_{i}},\label{eq:m-inf-sup-4}
\end{align}
where we have used the mortar coarseness assumption \eqref{eq:mortar_assumption}. 
Next, we note that 
\begin{equation}
\|\zeta_{h,i}\|_{\dvr,\Omg_i}\le C\|\mu^p\|_{\Gamma_{i}},\label{eq:m-inf-sup-5}
\end{equation}
which follows from the stability of $\Pi_i^{z}$
(\ref{eq:darcy-project-3}) with $\epsilon=1/2$, (\ref{eq:m-inf-sup-3}),
and the stability of $\Qphi$. 
	
Finally, combining (\ref{eq:m-inf-sup-4}) with (\ref{eq:m-inf-sup-5})
and defining $\zeta:=\zeta_{h,i}$ on $\Oi$ completes the proof.
\end{proof}

\begin{lem}[Displacement mortar inf-sup condition]
Under assumption \eqref{eq:mortar_assumption}, there exists a
	constant $\beta_{E}>0$, independent of $h$ and $H$ such that for
	any $\mu^{u}\in\Lambda_{H}^{u}$, the following bound holds 
	\begin{equation}
	\|\mu^{u}\|_{\G}\le\beta_{E}\sup_{0\ne\tau\in\X_{h}}\frac{\sum_{i=1}^{N}\gnp[\tau n_{i}]{\mu^{u}}_{\G_{i}}}{\|\tau\|_{\X_{h}}}.\label{eq:mortar-u-inf-sup}
	\end{equation}
\end{lem}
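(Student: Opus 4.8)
The plan is to transcribe the proof of Lemma~\ref{lem:mortar-press} to the stress setting, replacing scalar quantities by vectors, vector fields by matrix fields, and the Raviart--Thomas interpolant $\Pi_i^{z}$ by the elasticity canonical interpolant $\Pi_{i}^{\s}$. Given $\mu^{u}\in\Lambda_{H}^{u}$, extend it by zero on $\dO$. On each subdomain $\Oi$ let $\phi_{i}$ solve the vector Neumann problem
\begin{align*}
\dvr\nabla\phi_{i} & = \overline{\Qc_{h,i}^{u}\mu^{u}} & \text{in }\Oi,\\
\nabla\phi_{i}\cdot n_{i} & = \Qc_{h,i}^{u}\mu^{u} & \text{on }\dOi,
\end{align*}
where $\overline{\Qc_{h,i}^{u}\mu^{u}}$ is the constant vector fixed by the solvability condition, and set $\psi_{i}=\nabla\phi_{i}$, a matrix field on $\Oi$. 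Then $\psi_{i}n_{i}=\Qc_{h,i}^{u}\mu^{u}$ on $\dOi$, $\dvr\psi_{i}=\overline{\Qc_{h,i}^{u}\mu^{u}}$ is constant, and the scalar elliptic regularity estimate behind \eqref{eq:m-inf-sup-3}, applied componentwise, gives
\[
\|\psi_{i}\|_{1/2,\Oi}+\|\dvr\psi_{i}\|_{\Oi}\le C\|\Qc_{h,i}^{u}\mu^{u}\|_{\dOi}.
\]
Since $\mu^{u}$ was extended by zero and the normal trace space $\X_{h,i}n_{i}$ decouples over the faces of $\dOi$, $\Qc_{h,i}^{u}\mu^{u}$ vanishes on $\dO\cap\dOi$; hence $\psi_{i}\in H^{1/2}(\Oi,\M)\cap\X_{i}$ and $\Pi_{i}^{\s}\psi_{i}\in\X_{h,i}$ is well defined, with the homogeneous boundary condition on $\Gn^{\sigma}$ built into $\X_{h,i}$.

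Next I would set $\tau|_{\Oi}=\Pi_{i}^{\s}\psi_{i}$, which defines $\tau\in\X_{h}$. Because $\Qc_{h,i}^{u}\mu^{u}\in\X_{h,i}n_{i}$ and $\mu^{u}$ is supported on $\G_{i}$, the mortar projection property \eqref{eq:motor-project-1} followed by the interpolant property \eqref{eq:elast-project-2} gives, on each $\G_{i}$,
\[
\gnp[\tau n_{i}]{\mu^{u}}_{\Gi}=\gnp[\Pi_{i}^{\s}\psi_{i}\,n_{i}]{\Qc_{h,i}^{u}\mu^{u}}_{\Gi}=\gnp[\psi_{i}n_{i}]{\Qc_{h,i}^{u}\mu^{u}}_{\Gi}=\|\Qc_{h,i}^{u}\mu^{u}\|^{2}_{\Gi},
\]
exactly as in \eqref{eq:m-inf-sup-4}. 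Summing over $i$, pairing the two contributions on each $\Gamma_{i,j}$, and invoking the mortar coarseness assumption \eqref{eq:mortar_assumption} with $\star=u$ yields $\sum_{i=1}^{N}\gnp[\tau n_{i}]{\mu^{u}}_{\Gi}\ge C\|\mu^{u}\|^{2}_{\G}$.

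For the denominator, the stability bound for $\Pi_{i}^{\s}$ (with $\epsilon=1/2$), the elliptic regularity estimate above, and the $L^{2}$-stability of $\Qc_{h,i}^{u}$ together with $\|\Qc_{h,i}^{u}\mu^{u}\|_{\dOi}=\|\Qc_{h,i}^{u}\mu^{u}\|_{\Gi}\le\|\mu^{u}\|_{\Gi}$ give $\|\tau\|_{\X_{h}}\le C\|\mu^{u}\|_{\G}$, in analogy with \eqref{eq:m-inf-sup-5}. Dividing the two estimates and taking the supremum over $0\ne\tau\in\X_{h}$ then establishes \eqref{eq:mortar-u-inf-sup} with $\beta_{E}=1/C$.

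I do not expect a serious obstacle here: the argument is a direct analogue of Lemma~\ref{lem:mortar-press}. The only points that deserve a word of care are the elliptic regularity for the \emph{vector} Neumann problem on a possibly non-convex polygonal subdomain --- which reduces componentwise to the scalar estimate already used --- and the fact that $\Qc_{h,i}^{u}$ acts face by face, so the zero extension of $\mu^{u}$ is preserved and the homogeneous normal-trace condition built into $\X_{h,i}$ is respected; this is standard for the mixed stress spaces with $\dvr\X_{h,i}=V_{h,i}$.
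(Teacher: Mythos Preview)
Your proposal is correct and follows exactly the approach the paper indicates: the paper's own proof simply states that it follows similar arguments as in the proof of Lemma~\ref{lem:mortar-press}, and you have carefully carried out that transcription to the vector/matrix setting, including the componentwise reduction for elliptic regularity and the check that the zero extension of $\mu^{u}$ respects the homogeneous normal-trace condition in $\X_{h,i}$.
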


\begin{proof}
The proof follows similar arguments as in the proof of Lemma~\ref{lem:mortar-press}.
\end{proof}
\begin{lem}
  \label{lem:operator_elast}
  Under assumption \eqref{eq:mortar_assumption},
	there exists a linear operator $\Pi_{0}^{\s}:H^{\frac{1}{2}+\epsilon}(\Omega,\mathbb{M})\cap\X\to\X_{h,0}$
	for any $\epsilon>0$, such that for any $\tau\in H^{\frac{1}{2}+\epsilon}(\Omega,\mathbb{M})\cap\X$,
	\begin{align}
& \left({\rm div} (\Pi_{0}^{\s}\tau-\tau),\,v\right)_{\Omega_{i}}=0, \quad 1 \le i \le N, & \forall v\in V_{h,i},\label{eq:pi-weak-1}\\
	& \left(\Pi_{0}^{\s}\tau-\tau,\,\xi\right)=0, & \forall\xi\in\mathbb{Q}_{h},\label{eq:pi-weak-2}\\
& \|\Pi_{0}^{\s}\tau\|\le C\left(\|\tau\|_{\frac{1}{2}+\epsilon}+\|\text{\emph{div\,}}\tau\|\right),\label{eq:pi-weak-3}\\
& \|\Pi_{0}^{\s}\tau-\tau\|\le C\left(\sum_{i=1}^{N} h^{\tilde{t}}\|\tau\|_{\tilde{t},\Omega_i} + h^{\tilde{k}}H^{\frac{1}{2}}\|\tau\|_{\tilde{k}+\frac{1}{2}}\right), & 0 < \tilde{t}\le k+1,\,0<\tilde{k}\le k+1,\label{eq:pi-weak-5}\\
& \|{\rm div}(\Pi_{0}^{\s}\tau-\tau)\|_{\Omega_{i}}\le
Ch^{\tilde{l}}\|{\rm div}\,\tau\|_{\tilde{l},\Oi}, \quad 1 \le i \le N, & 0\le\tilde{l}\le l+1.         
\label{eq:pi-weak-div}
\end{align}
\end{lem}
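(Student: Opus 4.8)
The plan is to obtain $\Pi_{0}^{\s}$ as a small interface correction of the piecewise elliptic projection $\hat{\Pi}^{\s}$ from \eqref{ell-proj1}--\eqref{ell-proj4}, following the strategy used for scalar elliptic problems in \cite{APWY,ganis2009implementation} and for weakly symmetric elasticity in \cite{eldar_elastdd}. Concretely, set $\Pi_{0}^{\s}\tau := \hat{\Pi}^{\s}\tau + \delta$ with $\delta\in\X_{h}$ a correction to be determined. The field $\hat{\Pi}^{\s}\tau$ already supplies most of what is needed: by \eqref{ell-proj2} it satisfies the divergence relation \eqref{eq:pi-weak-1}, by \eqref{ell-proj3} the rotation orthogonality \eqref{eq:pi-weak-2}, and by \eqref{eq:inter-proj-14}, \eqref{eq:inter-proj-5}, together with the stability bound for $\hat{\Pi}^{\s}$ recalled just above, it provides the $\sum_{i}h^{\tilde t}\|\tau\|_{\tilde t,\Oi}$ term in \eqref{eq:pi-weak-5}, the estimate \eqref{eq:pi-weak-div}, and \eqref{eq:pi-weak-3} up to an interface contribution. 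Its only defect is that it generally fails to lie in $\X_{h,0}$. Hence $\delta$ must (i) carry the interface jump of $\hat{\Pi}^{\s}\tau$, i.e. $\sum_{i}\gnp[\delta\,n_{i}]{\mu^{u}}_{\G_{i}}=\sum_{i}\gnp[\hat{\Pi}^{\s}\tau\,n_{i}]{\mu^{u}}_{\G_{i}}$ for all $\mu^{u}\in\Lambda_{H}^{u}$; (ii) satisfy $\inp[\dvr\delta]{v}_{\Oi}=0$ for all $v\in V_{h,i}$; (iii) satisfy $\inp[\delta]{\xi}=0$ for all $\xi\in\mathbb{Q}_{h}$; and (iv) be controlled in $L^{2}$ by the size of that jump.

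First I would quantify the jump. Since $\hat{\Pi}_{i}^{\s}\tau\,n_{i}=(\Pi_{i}^{\s}\tau)n_{i}$ on $\G_{i}$ by \eqref{ell-proj4}, $(\Pi_{i}^{\s}\tau)n_{i}=\mathcal{Q}_{h,i}^{u}(\tau n_{i})$ on $\G_{i}$ by \eqref{eq:elast-project-2} and the definition of $\mathcal{Q}_{h,i}^{u}$, and $\tau\in\X$ has continuous normal components across the interfaces, the self-adjointness of the $L^{2}$-projections yields
\[
\sum_{i}\gnp[\hat{\Pi}^{\s}\tau\,n_{i}]{\mu^{u}}_{\G_{i}}=\sum_{1\le i<j\le N}\gnp[\tau n_{i}]{(\mathcal{Q}_{h,i}^{u}-\mathcal{Q}_{h,j}^{u})\mu^{u}}_{\G_{i,j}} = \sum_{1\le i<j\le N}\Big(\gnp[(I-\mathcal{Q}_{h,i}^{u})\tau n_{i}]{(I-\mathcal{Q}_{h,i}^{u})\mu^{u}}_{\G_{i,j}}-\gnp[(I-\mathcal{Q}_{h,j}^{u})\tau n_{i}]{(I-\mathcal{Q}_{h,j}^{u})\mu^{u}}_{\G_{i,j}}\Big).
\]
Estimating each term by Cauchy--Schwarz, using \eqref{eq:inter-proj-8} and a trace inequality to bound $\|(I-\mathcal{Q}_{h,i}^{u})\tau n_{i}\|_{\G_{i,j}}$, and a scaling argument on the coarse mortar grid together with Assumption \eqref{eq:mortar_assumption} for the factor involving $\mu^{u}$, gives a bound of the form $\big|\sum_{i}\gnp[\hat{\Pi}^{\s}\tau\,n_{i}]{\mu^{u}}_{\G_{i}}\big|\le C h^{\tilde k}H^{1/2}\|\tau\|_{\tilde k+1/2}\|\mu^{u}\|_{\G}$, which is the origin of the multiscale factor $h^{\tilde k}H^{1/2}$ in \eqref{eq:pi-weak-5}.

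Next I would construct $\delta$. Using the displacement mortar inf-sup condition \eqref{eq:mortar-u-inf-sup}, select a mortar function $\mu_{*}^{u}\in\Lambda_{H}^{u}$ representing the jump functional from the previous step, with $\|\mu_{*}^{u}\|_{\G}$ controlled by its norm. On each subdomain independently solve a mixed weakly symmetric elasticity problem in $\X_{h,i}\times V_{h,i}\times\mathbb{Q}_{h,i}$, with data arranged so that the resulting $\delta_{i}\in\X_{h,i}$ satisfies $\inp[\dvr\delta_{i}]{v}_{\Oi}=0$ for all $v\in V_{h,i}$, $\inp[\delta_{i}]{\xi}_{\Oi}=0$ for all $\xi\in\mathbb{Q}_{h,i}$, and whose normal trace on $\G_{i}$ reproduces $\mathcal{Q}_{h,i}^{u}\mu_{*}^{u}$ (and vanishes on $\partial\Omega$); well-posedness and the bound $\|\delta_{i}\|_{\dvr,\Oi}\le C\|\mu_{*}^{u}\|_{\G_{i}}$ follow from the elasticity inf-sup \eqref{eq:inf-sup-elast-1} and a trace estimate. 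By construction $\sum_{i}\gnp[\delta\,n_{i}]{\mu^{u}}_{\G_{i}}=\sum_{i}\gnp[\hat{\Pi}^{\s}\tau\,n_{i}]{\mu^{u}}_{\G_{i}}$, so $\Pi_{0}^{\s}\tau\in\X_{h,0}$. Then \eqref{eq:pi-weak-1}, \eqref{eq:pi-weak-2}, \eqref{eq:pi-weak-div} hold because $\delta$ contributes nothing to the first two and has vanishing $V_{h,i}$-projected divergence, while \eqref{eq:pi-weak-3} and \eqref{eq:pi-weak-5} follow by the triangle inequality from the bounds on $\hat{\Pi}^{\s}\tau$ together with $\|\delta\|\le C\|\mu_{*}^{u}\|_{\G}$, estimated via the jump bound above for \eqref{eq:pi-weak-5} and via a cruder trace estimate $\|\delta\|\le C\|\tau\|_{1/2+\epsilon}$ for \eqref{eq:pi-weak-3}.

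The main obstacle is the simultaneous realization of (i)--(iv) for $\delta$: unlike the scalar case, the rotation variable forces the local correction problems to be genuine weakly symmetric elasticity saddle-point problems, so the full triplet inf-sup \eqref{eq:inf-sup-elast-1} (covering both $V_{h,i}$ and $\mathbb{Q}_{h,i}$) is indispensable, and this is precisely why the rotation-orthogonal elliptic projection $\hat{\Pi}^{\s}$, rather than the canonical interpolant $\Pi_{i}^{\s}$, is taken as the base field; one must also arrange the prescribed interface data to be consistent with the divergence constraint on each subdomain. The second delicate point is extracting the sharp $h^{\tilde k}H^{1/2}$ power in \eqref{eq:pi-weak-5}, which requires trading a factor of $H^{-1/2}$ from a local trace/scaling inequality against the approximation gain from \eqref{eq:inter-proj-8} and invoking Assumption \eqref{eq:mortar_assumption} at exactly the right place, rather than crudely bounding the mortar projection error; the scalar prototype of this estimate is carried out in \cite{APWY,ganis2009implementation} and the elasticity version in \cite{eldar_elastdd}.
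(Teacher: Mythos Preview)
Your proposal is essentially correct and follows the same strategy as the paper, which simply points to \cite[Lemma~4.6]{eldar_elastdd} for \eqref{eq:pi-weak-1}--\eqref{eq:pi-weak-5} and notes that \eqref{eq:pi-weak-div} is immediate from \eqref{eq:pi-weak-1} and $L^{2}$-projection approximation. The only cosmetic difference is that the paper (and \cite{eldar_elastdd}) write the operator as $\Pi_{0}^{\s}\tau|_{\Oi}=\hat{\Pi}_{i}^{\s}(\tau+\delta\tau_{i})$, i.e.\ correct the \emph{argument} of $\hat{\Pi}^{\s}$ by a continuous field $\delta\tau_{i}$ and then project, whereas you correct the \emph{output} by a discrete field $\delta\in\X_{h}$; since $\hat{\Pi}^{\s}$ is linear with the boundary condition \eqref{ell-proj4}, the two viewpoints are interchangeable. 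Your jump computation and the identification of the $h^{\tilde k}H^{1/2}$ mechanism are exactly the ones used in \cite{eldar_elastdd}. One small imprecision: the displacement mortar inf-sup \eqref{eq:mortar-u-inf-sup} yields a $\tau\in\X_{h}$ controlling a given $\mu^{u}$, not a mortar function $\mu_{*}^{u}$ representing the jump; in practice one constructs $\delta_{i}$ directly from the (distributed) jump data via the local auxiliary problems, as you describe, without passing through an intermediate $\mu_{*}^{u}$.
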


\begin{proof}
	%The proof depends on the construction of a correction term to enforce
	%the solution $\sigma_{h}$ to (\ref{eq:monodd-mmmfe1})$\--$(\ref{eq:monodd-mmmfe3})
	%to be in the weakly continuous spaces and also making use of (\ref{eq:inf-sup-elast})
	%for each sub-domain $\Omega_{i}$.
The proof is based on constructing $ \Pi_{0}^{\s}\tau|_\dOi = \hat{\Pi}^\s_i (\tau + \delta \tau_i)$, where the correction $\delta \tau_i$ is designed to give weak continuity of the normal components.
The proof of \eqref{eq:pi-weak-1}--\eqref{eq:pi-weak-5}
is given in \cite[Lemma 4.6]{eldar_elastdd}. Bound \eqref{eq:pi-weak-div} follows from \eqref{eq:pi-weak-1} and the approximation properties of the $L^2$-projection \cite{ciarlet2002finite}.
\end{proof}
\begin{lem}\label{lem:operator_darcy}
  Under assumption \eqref{eq:mortar_assumption}, there exists a linear operator $\Pi_{0}^{z}: H^{\frac{1}{2}+\epsilon}(\Omega,\R^d)\cap Z\to Z_{h,0}$
such that for any $\zeta\in H^{\frac{1}{2}+\epsilon}(\Omega,\R^d) \cap Z$,
\begin{align}
  & \left(\text{\emph{div}}\left(\Pi_{0}^{z}\zeta-\zeta\right),\,w\right)_{\Omega_{i}}=0,
  \quad 1 \le i \le N, & \forall w\in W_{h,i},\label{eq:pi-weak-d-1}\\
& \|\Pi_{0}^{z}\zeta\|_{Z_{h}}\le C\left(\|\zeta\|_{\frac{1}{2}+\epsilon} + \|\dvr\zeta\|\right),\label{eq:pi-weak-d-2}\\
& \|\Pi_{0}^{z}\zeta-\zeta\| \le C \left( \sum_{i=1}^{N} h^{\tilde{t}}\|\zeta\|_{\tilde{t},\Omega_{i}} + h^{\tilde{r}}H^{\frac{1}{2}}\|\zeta\|_{\tilde{r}+\frac{1}{2}}\right), &
  0 < \tilde{t}\le r+1,\, 0<\tilde{r}\le r+1,\label{eq:pi-weak-d-4}\\
&  \|{\rm div}(\Pi_{0}^{z}\zeta-\zeta)\|_{\Omega_{i}} \le
Ch^{\tilde{s}}\|{\rm div}\,\zeta\|_{\tilde{s},\Oi}, \quad 1 \le i \le N, & 0\le\tilde{s}\le s+1.         
\label{eq:pi-weak-d-div} 
\end{align}
\end{lem}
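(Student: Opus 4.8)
The plan is to mirror the proof of Lemma~\ref{lem:operator_elast} (which is the elasticity analogue), replacing the elliptic stress projection $\hat\Pi^\s_i$ with the canonical Darcy interpolant $\Pi^z_i$ and using the \emph{pressure} mortar inf-sup condition of Lemma~\ref{lem:mortar-press} in place of the displacement one. First I would set $\Pi_0^z\zeta|_{\Oi} := \Pi_i^z(\zeta + \delta\zeta_i)$, where $\delta\zeta_i \in Z_{h,i}$ is a correction chosen so that the normal components of $\Pi_0^z\zeta$ are weakly continuous against $\Lambda_H^p$. The correction is built as follows: the jump $\sum_i \langle (\Pi_i^z\zeta)\cdot n_i, \mu^p\rangle_{\Gi}$ defines a functional on $\Lambda_H^p$; by Lemma~\ref{lem:mortar-press} (equivalently, by the surjectivity of the associated interface operator) there is $\zeta^\star \in Z_h$ with $\sum_i\langle \zeta^\star\cdot n_i,\mu^p\rangle_{\Gi}$ equal to that jump and $\|\zeta^\star\|_{Z_h}$ controlled by $\beta_D$ times the dual norm of the jump; then I distribute $-\zeta^\star$ among the subdomains (e.g.\ split equally on shared interfaces) and apply $\Pi_i^z$ to absorb it, using that $\Pi_i^z$ preserves normal moments \eqref{eq:darcy-project-2} so that no further jump is introduced. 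This gives \eqref{eq:pi-weak-d-1} directly from the commuting property \eqref{eq:darcy-project-1} of $\Pi_i^z$ on each subdomain.

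Next, for the stability bound \eqref{eq:pi-weak-d-2} and the approximation bound \eqref{eq:pi-weak-d-4}, I would estimate the two pieces separately: the $\Pi_i^z\zeta$ term is handled by the stability \eqref{eq:darcy-project-3} with $\epsilon = 1/2$ and the standard interpolation bound \eqref{eq:inter-proj-15}, while the correction term requires bounding $\|\zeta^\star\|_{Z_h}$ by the norm of the jump functional. The key estimate there is that for $\zeta$ smooth enough one has $\sum_i \langle(\Pi_i^z\zeta - \zeta)\cdot n_i,\mu^p\rangle_{\Gi}$ small: using \eqref{eq:darcy-project-2} this is $\sum_i\langle(\Pi_i^z\zeta - \zeta)\cdot n_i, \mu^p - \mathcal{Q}^p_{h,i}\mu^p\rangle_{\Gi}$, which one bounds via a scaling/trace argument by $h^{\tilde r}H^{1/2}\|\zeta\|_{\tilde r + 1/2}$-type terms after summing over interface elements and invoking \eqref{eq:inter-proj-9}, \eqref{eq:mortar_assumption}, and a discrete trace inequality; the $\zeta$ being globally $H(\dvr)$-conforming means $\sum_i\langle\zeta\cdot n_i,\mu^p\rangle = 0$, so only the interpolation error contributes. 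Combining, $\|\zeta^\star\| \le C(\sum_i h^{\tilde t}\|\zeta\|_{\tilde t,\Oi} + h^{\tilde r}H^{1/2}\|\zeta\|_{\tilde r + 1/2})$, and a triangle inequality with \eqref{eq:inter-proj-15} yields \eqref{eq:pi-weak-d-4}; \eqref{eq:pi-weak-d-2} follows similarly but more crudely. Finally, \eqref{eq:pi-weak-d-div} is immediate: by \eqref{eq:pi-weak-d-1}, $\dvr(\Pi_0^z\zeta - \zeta)|_{\Oi}$ equals the $L^2(\Oi)$-orthogonal projection error of $\dvr\zeta$ onto $W_{h,i}$, so the bound is the standard $L^2$-projection estimate \eqref{eq:inter-proj-11} (noting the approximation order is governed by the pressure space degree $s$).

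Actually, since this is verbatim the argument in \cite[Lemma~4.6]{eldar_elastdd} with the elasticity spaces $(\X_{h,i},V_{h,i},\Qc_{h,i})$ replaced by the Darcy pair $(Z_{h,i},W_{h,i})$ — and the latter is strictly simpler because there is no rotation variable and no need for the elliptic projection $\hat\Pi^\s$ — the cleanest exposition is to cite that lemma and only indicate the substitutions. The main obstacle, such as it is, is the bound on the correction term's norm, i.e.\ showing the jump functional $\mu^p \mapsto \sum_i\langle(\Pi_i^z\zeta - \zeta)\cdot n_i,\mu^p\rangle_{\Gi}$ has dual norm bounded by the right-hand side of \eqref{eq:pi-weak-d-4}; this is where the mortar coarseness Assumption~\eqref{eq:mortar_assumption} enters, allowing the $\mu^p$-norm on $\Gamma_{i,j}$ to be replaced by the norms of its $\mathcal{Q}^p_{h,i}$- and $\mathcal{Q}^p_{h,j}$-projections before applying local trace and inverse inequalities. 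Everything else is bookkeeping that parallels the $\X_{h,0}$ case line for line.

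\begin{proof}
The proof follows the argument of Lemma~\ref{lem:operator_elast} (see \cite[Lemma~4.6]{eldar_elastdd}), with the elasticity triplet $\X_{h,i}\times V_{h,i}\times\Qc_{h,i}$ replaced by the Darcy pair $Z_{h,i}\times W_{h,i}$ and the canonical interpolant $\Pi_i^z$ in place of $\hat\Pi_i^\s$; the pressure mortar inf-sup condition of Lemma~\ref{lem:mortar-press} is used in place of \eqref{eq:mortar-u-inf-sup}. We set $\Pi_0^z\zeta|_{\Oi} = \Pi_i^z(\zeta + \delta\zeta_i)$, where $\delta\zeta_i \in Z_{h,i}$ is a correction constructed via Lemma~\ref{lem:mortar-press} so that the normal components of $\Pi_0^z\zeta$ are weakly continuous against $\Lambda_H^p$, i.e.\ $\Pi_0^z\zeta \in Z_{h,0}$. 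Property \eqref{eq:pi-weak-d-1} then follows from \eqref{eq:darcy-project-1} on each subdomain, and \eqref{eq:pi-weak-d-div} follows from \eqref{eq:pi-weak-d-1}, which implies $\dvr(\Pi_0^z\zeta-\zeta)|_{\Oi}$ is the $L^2(\Oi)$-projection error of $\dvr\zeta$ onto $W_{h,i}$, combined with \eqref{eq:inter-proj-11}. Bounds \eqref{eq:pi-weak-d-2} and \eqref{eq:pi-weak-d-4} follow from the stability \eqref{eq:darcy-project-3} and approximation \eqref{eq:inter-proj-15} of $\Pi_i^z$ together with the estimate on the correction, which in turn uses \eqref{eq:darcy-project-2}, a trace/scaling argument, \eqref{eq:inter-proj-9}, and the mortar coarseness assumption \eqref{eq:mortar_assumption}, exactly as in \cite[Lemma~4.6]{eldar_elastdd}.
\end{proof}
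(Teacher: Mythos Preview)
Your proposal is correct and follows essentially the same approach as the paper: both defer to the established correction-based construction $\Pi_0^z\zeta|_{\Oi} = \Pi_i^z(\zeta + \delta\zeta_i)$ and cite existing literature rather than spelling out the details. The only difference is the reference chosen: you route through the elasticity analogue \cite[Lemma~4.6]{eldar_elastdd} and specialize back to Darcy, whereas the paper cites the original Darcy mortar constructions in \cite[Section~3]{arbogast2000mixed} and \cite[Section~3]{APWY} directly---which is arguably cleaner, since those are the sources that \cite{eldar_elastdd} itself adapted.
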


\begin{proof}
The proof follows from the arguments given in \cite[Section 3]{arbogast2000mixed} and \cite[Section 3]{APWY}. 
\end{proof}

Lemmas \ref{lem:operator_elast} and \ref{lem:operator_darcy} can be used to show
inf-sup stability for to the weakly continuous stress and velocity spaces.

\begin{lem}\label{thm:inf-sup-weak}
Under assumption \eqref{eq:mortar_assumption}, there exist positive constants $C_{E}$ and $C_{D}$ independent of the discretization parameters $h$ and $H$ such that 
\begin{align}
  \forall \, v\in V_{h}, \, \xi\in\mathbb{Q}_{h}, \quad \|v\|+\|\xi\| & \le C_{E}\sup_{0\ne\tau\in\mathbb{X}_{h,0}}\frac{\inp[\dvr_h \tau]{v}
    + \inp[\tau]{\xi}}{\|\tau\|_{\X_{h}}},\label{eq:inf-sup-elast-weak} \\
\forall \, w \in W_h, \quad
  \|w\| & \le C_{D}\sup_{0\ne\zeta\in Z_{h,0}}\frac{\left(\dvr_h{\zeta},\,w\right)}
        {\|\zeta\|_{Z_{h}}}.\label{eq:inf-sup-darcy-weak}
\end{align}

\end{lem}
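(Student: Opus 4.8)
The plan is to prove each bound by exhibiting, for given discrete data, a test function in the weakly continuous space that realizes the supremum, obtained from a continuous auxiliary problem on $\Omega$ composed with the weakly continuous projection operators $\Pi_0^{z}$ and $\Pi_0^{\s}$ of Lemmas~\ref{lem:operator_darcy} and \ref{lem:operator_elast}. For the velocity bound \eqref{eq:inf-sup-darcy-weak} this is short: given $w\in W_h$, I would solve the scalar elliptic problem $-\dvr\nabla\phi = w$ in $\Omega$ with $\nabla\phi\cdot n = 0$ on $\Gn^z$ and $\phi = 0$ on $\Gd^p$, set $\psi := \nabla\phi$, so that $\dvr\psi = -w$ and $\psi\cdot n = 0$ on $\Gn^z$, and use elliptic regularity on the polygonal/polyhedral domain to get $\psi\in H^{\frac{1}{2}+\epsilon}(\Omega,\R^d)\cap Z$ with $\|\psi\|_{\frac{1}{2}+\epsilon}+\|\dvr\psi\|\le C\|w\|$. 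Taking $\zeta := -\Pi_0^{z}\psi\in Z_{h,0}$, property \eqref{eq:pi-weak-d-1} together with $w\in W_h$ gives $\dvr_h\zeta|_{\Oi} = \mathcal{P}_{h,i}^{p}(w|_{\Oi}) = w|_{\Oi}$, so $(\dvr_h\zeta,w) = \|w\|^2$, while \eqref{eq:pi-weak-d-2} gives $\|\zeta\|_{Z_h}\le C\|w\|$; dividing yields \eqref{eq:inf-sup-darcy-weak}.

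For the stress bound \eqref{eq:inf-sup-elast-weak} I would take $\tau := \Pi_0^{\s}\tau_v + M\tau^{\g}$ with $M>0$ large, where $\tau_v$ resolves the divergence and $\tau^{\g}$ the rotation. For $\tau_v$: given $v\in V_h$, solve $\Delta\phi = v$ in $\Omega$ with $\phi = 0$ on $\Gd^u$ and $\nabla\phi\cdot n = 0$ on $\Gn^\sigma$, and set $\tau_v := \nabla\phi\in H^{\frac{1}{2}+\epsilon}(\Omega,\M)\cap\X$ with $\|\tau_v\|_{\frac{1}{2}+\epsilon}+\|\dvr\tau_v\|\le C\|v\|$; then $\Pi_0^{\s}\tau_v\in\X_{h,0}$, and \eqref{eq:pi-weak-1}--\eqref{eq:pi-weak-3} with $v\in V_h$ give $\dvr_h(\Pi_0^{\s}\tau_v)=v$, $(\Pi_0^{\s}\tau_v,\xi)=(\tau_v,\xi)$ for all $\xi\in\mathbb{Q}_h$, and $\|\Pi_0^{\s}\tau_v\|_{\X_h}\le C\|v\|$. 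For $\tau^{\g}$ one needs $\tau^{\g}\in\X_{h,0}$ with $\dvr_h\tau^{\g}=0$, $(\tau^{\g},\xi)\ge c\|\xi\|^2$ and $\|\tau^{\g}\|\le C\|\xi\|$; I would obtain this by splitting $\xi$ into its subdomain averages and the complementary mean-zero-per-subdomain part, controlling the latter via the subdomain inf-sup \eqref{eq:inf-sup-elast-1} restricted to divergence-free stresses in $\X^0_{h,i}$ (which lie in $\X_{h,0}$ automatically), and the averages by a constant divergence-free generator of the rotation plus an interface correction built from the displacement mortar inf-sup \eqref{eq:mortar-u-inf-sup}, chosen to restore weak normal continuity while keeping the stress divergence-free and orthogonal to $\mathbb{Q}_h$ (so that no cross terms with $\xi$ survive). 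With this $\tau$, since $v\in V_h$, $(\dvr_h\tau,v)+(\tau,\xi) = \|v\|^2+(\tau_v,\xi)+M(\tau^{\g},\xi)\ge\|v\|^2 - C\|v\|\,\|\xi\| + Mc\|\xi\|^2$, which for $M$ large enough and a Young inequality is bounded below by $\tfrac{1}{2}(\|v\|^2+\|\xi\|^2)$, while $\|\tau\|_{\X_h}\le C(\|v\|+\|\xi\|)$; this gives \eqref{eq:inf-sup-elast-weak}. This argument follows \cite{eldar_elastdd}.

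The step I expect to be the main obstacle is the construction of $\tau^{\g}$. Unlike the divergence term, and unlike the pure Darcy case, the skew part of $\xi\in\mathbb{Q}_h$ is not resolved by any smooth, hence $\Pi_0^{\s}$-admissible, auxiliary field, so $\tau^{\g}$ cannot come from a single continuous construction; it has to combine a subdomain-local discrete inf-sup with an interface correction through the coarse mortar space, and one must verify that this correction simultaneously fixes the interface jump, stays divergence-free, remains orthogonal to $\mathbb{Q}_h$, and obeys a bound independent of $h$ and $H$. The remaining ingredients — the elliptic regularity of the auxiliary solutions, the commuting and stability properties of $\Pi_0^{\s}$ and $\Pi_0^{z}$, and the final Young-inequality bookkeeping — are routine.
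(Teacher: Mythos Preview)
Your argument for \eqref{eq:inf-sup-darcy-weak} is correct and is essentially a constructive instance of Fortin's lemma; the paper proceeds the same way (stated for the elasticity case and declared ``similar'' for Darcy).

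For \eqref{eq:inf-sup-elast-weak}, however, you are working much harder than necessary, and the piece you correctly identify as the obstacle---the $\tau^{\g}$ construction---is in fact avoidable. The paper's proof is a one-step Fortin argument: it invokes the continuous inf-sup condition
\[
\|v\|+\|\xi\| \le \tilde C_E \sup_{0\ne\tau\in H^1(\Omega,\M)}\frac{(\dvr\tau,v)+(\tau,\xi)}{\|\tau\|_1},
\]
and then observes that $\Pi_0^{\s}$ is a Fortin operator for \emph{both} constraints simultaneously: by \eqref{eq:pi-weak-1}--\eqref{eq:pi-weak-2} one has $(\dvr_h\Pi_0^{\s}\tau,v)=(\dvr\tau,v)$ and $(\Pi_0^{\s}\tau,\xi)=(\tau,\xi)$ for discrete $v,\xi$, while \eqref{eq:pi-weak-3} together with \eqref{eq:pi-weak-1} gives $\|\Pi_0^{\s}\tau\|_{\X_h}\le C\|\tau\|_1$. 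Restricting the discrete supremum to $\Pi_0^{\s}(H^1(\Omega,\M))\subset\X_{h,0}$ then transfers the continuous inf-sup directly.

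The key point you overlooked is \eqref{eq:pi-weak-2}: $\Pi_0^{\s}$ already preserves the skew pairing, so no separate $\tau^{\g}$ is needed. By choosing $\tau_v=\nabla\phi$ to hit only the divergence, you discard this property and are forced into a hand-built rotation corrector. Your sketch for $\tau^{\g}$ is also not obviously completable as stated: the mortar inf-sup \eqref{eq:mortar-u-inf-sup} bounds $\mu^u$ by a jump functional, but does not by itself furnish a $\tau$ with prescribed interface jump that is in addition divergence-free, $\mathbb{Q}_h$-orthogonal, and uniformly bounded---those extra constraints are exactly what makes the construction of $\Pi_0^{\s}$ in Lemma~\ref{lem:operator_elast} nontrivial, and you would essentially be reproving that lemma. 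The paper simply uses it.
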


\begin{proof}
The proof follows the argument in Fortin's Lemma \cite[Proposition~2.8]{brezzi1991mixed}. We present the proof of \eqref{eq:inf-sup-elast-weak}. The proof of \eqref{eq:inf-sup-darcy-weak} is similar. We first note that the following continuous inf-sup condition holds \cite[Section~2.4.3]{Gatica}:
\begin{align}
\forall v \in V, \, \xi \in \W, \quad
\|v\| + \|\xi\| & \le \tilde C_E \sup_{0\ne\tau\in H^1(\Omega, \mathbb{M})} \frac{\inp[\dvr{\tau}]{v} + \inp[\tau]{\xi}}
      {\|\tau\|_{1}}. \label{eq:inf-sup-elast-cont}
\end{align}
Using Lemma~\ref{lem:operator_elast} and \eqref{eq:inf-sup-elast-cont}, we have, $\forall v \in V_h, \, \xi \in \W_h$,
\begin{align*}
& \sup_{0\ne\tau\in\mathbb{X}_{h,0}}\frac{\inp[\dvr_h \tau]{v}
    + \inp[\tau]{\xi}}{\|\tau\|_{\X_{h}}} \\
& \qquad \ge
\sup_{0\ne\tau\in H^1(\Omega, \mathbb{M})} \frac{\inp[\dvr_h \Pi_0^\sigma\tau]{v}
  + \inp[\Pi_0^\sigma\tau]{\xi}}{\|\Pi_0^\sigma\tau\|_{\X_{h}}}
= \sup_{0\ne\tau\in H^1(\Omega, \mathbb{M})} \frac{\inp[\dvr \tau]{v}
  + \inp[\tau]{\xi}}{\|\Pi_0^\sigma\tau\|_{\X_{h}}} \\
& \qquad \ge \frac{1}{C}
\sup_{0\ne\tau\in H^1(\Omega, \mathbb{M})} \frac{\inp[\dvr \tau]{v}
  + \inp[\tau]{\xi}}{\|\tau\|_1} \ge \frac{1}{C\tilde C_E}(\|v\| + \|\xi\|),
\end{align*}
which implies \eqref{eq:inf-sup-elast-weak} with $C_{E} = C\tilde C_E$.
\end{proof}

\section{Well-posedness of the semi-discrete multiscale mortar MFE method} \label{sec:Analysis-of-MMMFE}

In this section we present the well-posedness analysis of the method developed in Section~\ref{sec:method}. We show that the method has a unique solution and establish 
stability bounds.

\subsection{Existence and uniqueness of a solution}\label{sec:well-posed}

We next show the existence of a unique solution
to the system of equations (\ref{eq:monodd-mmmfe1})$\--$(\ref{eq:monodd-mmmfe7})
under the assumption (\ref{eq:mortar_assumption}). We follow closely
the proof for the well-posedness of the multipoint flux method for
the Biot system given in \cite{msfmfe-Biot}. We base our proof on
the theory for showing the existence of solution to a degenerate parabolic
system \cite{Showalter}. In particular, we use \cite[IV, Theorem 6.1(b)]{Showalter}
which is stated as follows.
\begin{thm}
	\label{thm:algebraic-pde}Let the linear, symmetric, and monotone
	operator $\mathcal{N}$ be given for the real vector space $E$ to
	its algebraic dual $E^{*}$, and let $E_{b}^{'}$ be the Hilbert space
	which is the dual of $E$ with the seminorm $|x|_{b}=\sqrt{\mathcal{N}x(x)}$
	for $x\in E.$ Let $\mathcal{M}\subset E\times E_{b}^{'}$ be a relation
	with the domain $D=\left\{ x\in E:\mathcal{M}(x)\ne\emptyset\right\} .$
	Assume that ${\cal M}$ is monotone and $Range({\cal N+M})=E_{b}^{'}$.
	Then for each $x_{0}\in D$ and for each ${\cal F}\in W^{1,1}\left(0,T;E_{b}^{'}\right)$,
	there is a solution $x$ of 
	\begin{eqnarray*}
	\frac{d}{dt}\mathcal{N}x(t)+{\cal M}x(t)\ni{\cal F}(t), &  & \quad a.e.\,\,\,0<t<T,
	\end{eqnarray*}
	with 
	\[
	{\cal N}x\in W^{1,\infty}\left(0,T;E_{b}^{'}\right),x(t)\in D,\,\text{for all }0\le t\le T,\text{ and }{\cal N}x(0)={\cal N}x_{0}.
	\]
\end{thm}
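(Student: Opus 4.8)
Since this statement is quoted verbatim from \cite{Showalter}, the paper does not need an independent proof; but one can outline how it would be established from the theory of maximal monotone operators. The plan is to reduce the degenerate evolution inclusion to a standard (non-degenerate) one in a Hilbert space and then invoke the classical Komura--Brezis existence theorem.

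First I would build the Hilbert space $E_b$ underlying the seminorm $|\cdot|_b$. The bilinear form $(x,y)\mapsto \mathcal{N}x(y)$ is symmetric and positive semidefinite by the hypotheses on $\mathcal{N}$, so quotienting $E$ by $\ker|\cdot|_b$ and completing yields a Hilbert space $E_b$ whose dual is the space $E_b'$ appearing in the statement. For each $x\in E$ the functional $\mathcal{N}x$ is $|\cdot|_b$-bounded, with $\|\mathcal{N}x\|_{E_b'}=|x|_b$, so $\mathcal{N}$ is an isometry $E\to E_b'$ that extends by density to the Riesz isomorphism $\widetilde{\mathcal{N}}:E_b\to E_b'$. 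Moreover the $E_b'$--$E_b$ duality pairing restricts on $E_b'\times E$ to the original pairing of $E^{*}$ with $E$, which is what makes the monotonicity of $\mathcal{M}$ usable in the Hilbert-space setting.

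Next I would change the unknown to $v(t):=\widetilde{\mathcal{N}}x(t)\in E_b'$, so that $\frac{d}{dt}\mathcal{N}x+\mathcal{M}x\ni\mathcal{F}$ becomes $v'+\mathcal{B}v\ni\mathcal{F}$ in $E_b'$, with $\mathcal{B}:=\mathcal{M}\circ\widetilde{\mathcal{N}}^{-1}$ and $D(\mathcal{B})=\widetilde{\mathcal{N}}(D)$. The heart of the argument is that $\mathcal{B}$ is maximal monotone on the Hilbert space $E_b'$. Monotonicity is inherited from $\mathcal{M}$: for $w_i\in\mathcal{B}v_i$ set $x_i=\widetilde{\mathcal{N}}^{-1}v_i\in D$; then $w_i\in\mathcal{M}x_i$ and $(w_1-w_2,\,v_1-v_2)_{E_b'}=(w_1-w_2)(x_1-x_2)\ge 0$ by the duality identification. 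Maximality is precisely the hypothesis $\mathrm{Range}(\mathcal{N}+\mathcal{M})=E_b'$, since solving $(I+\mathcal{B})v\ni f$ in $E_b'$ is the same as finding $x\in D$ with $(\mathcal{N}+\mathcal{M})x\ni f$.

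Finally I would apply the classical result on evolution inclusions governed by maximal monotone operators in Hilbert space: with $\mathcal{B}$ maximal monotone on $E_b'$, $v_0:=\mathcal{N}x_0\in D(\mathcal{B})$, and $\mathcal{F}\in W^{1,1}(0,T;E_b')$, there is a (unique) $v\in W^{1,\infty}(0,T;E_b')$ with $v(0)=v_0$, $v(t)\in D(\mathcal{B})$ for all $t$, and $v'+\mathcal{B}v\ni\mathcal{F}$ a.e.; setting $x(t):=\widetilde{\mathcal{N}}^{-1}v(t)$ returns the asserted solution, with $x(t)\in D$, $\mathcal{N}x\in W^{1,\infty}(0,T;E_b')$, and $\mathcal{N}x(0)=\mathcal{N}x_0$. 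I expect the delicate points to be the Hilbert-space realization itself --- showing $\mathcal{N}$ genuinely extends to an isomorphism onto all of $E_b'$ and keeping the domain bookkeeping consistent, in particular that $\widetilde{\mathcal{N}}^{-1}v(t)$ lands in $E$ (not merely in $E_b$) whenever $v(t)\in D(\mathcal{B})$ --- together with invoking the precise version of the Komura--Brezis theorem valid for merely $W^{1,1}$ data. All of these are carried out in \cite[IV, Theorem 6.1]{Showalter}, which is why the paper simply cites the result.
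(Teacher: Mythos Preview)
Your assessment is correct: the paper does not provide a proof of this theorem but simply quotes it from \cite[IV, Theorem 6.1(b)]{Showalter} as a tool for the subsequent well-posedness analysis. Your outline of how the result is established --- realizing $E_b'$ as a Hilbert space via the seminorm induced by $\mathcal{N}$, transferring $\mathcal{M}$ to a maximal monotone operator $\mathcal{B}$ on $E_b'$ via the range condition, and invoking the Komura--Brezis theorem for the resulting non-degenerate evolution inclusion --- is faithful to Showalter's argument, and your identification of the delicate bookkeeping points is accurate.
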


Using the above theorem, we now prove that the semi-discrete system
(\ref{eq:monodd-mmmfe1})--(\ref{eq:monodd-mmmfe7}) is well-posed. We start by reformulating it 
to fit the setting of Theorem \ref{thm:algebraic-pde}. For this purpose, we define operators 
\begin{align*}
& \left(A_{\s\s}\s_{h},\,\tau\right)=\left(A\s_{h},\,\tau\right),\quad\left(A_{\s p}\s_{h},\,w\right)=\alpha\left(A\s_{h},\,wI\right),\quad\left(A_{\s u}\s_{h},\,v\right)=\left(\text{div}_h \, \s_{h},\,v\right),\\
& \left(A_{\s\gamma}\s_{h},\,\xi\right)=\left(\s_{h},\,\xi\right),\quad\left(A_{\s\lambda}\s_{h},\,\mu^{u}\right)=\sum_{i=1}^{N}\left\langle \s_{h}n_{i},\,\mu^{u}\right\rangle _{\G_{i}},\quad\left(A_{zz}z_{h},\,\zeta\right)=\left(K^{-1}z_{h},\,\zeta\right),\\
& \left(A_{zp}z_{h},\,w\right)=-\left(\text{div}_h \, z_{h},\,w\right),\quad\left(A_{z\lambda}z_{h},\,\mu^{p}\right)=\sum_{i=1}^{N}\left\langle z_{h}\cdot n_{i},\,\mu^{p}\right\rangle _{\G_{i}},\\
& \left(A_{pp}p_{h},\,w\right)=c_{0}\left(p_{h},\,w\right)+\alpha^{2}\left(Ap_{h}I,\,wI\right).
\end{align*}

In order to fit in the structure of Theorem~\ref{thm:algebraic-pde}, we consider a modified problem where \eqref{eq:monodd-mmmfe1} is differentiated in time. Introducing the new variables $\dot{u}_h$, $\dot{\gamma}_h$, and $\dot{\lambda}^u_H$ representing $\dt u_h$, $\dt \gamma_h$, and $\dt \lambda^u_H$, respectively, we differentiate (\ref{eq:monodd-mmmfe1}) in time to get
\begin{align}
& \inp[\dt A\left(\sigma_{h} + \a p_{h}I\right)]{\tau} + \inp[\dot{u}_h]{\dvr_h{\tau}} + \inp[\dot{\gamma}_h]{\tau} - \sum_{i=1}^{N}\inp[\dot{\lambda}^u_H]{\tau n_{i}}_{\G_{i}} = 0, & \forall\tau\in\X_{h}.\label{eq:fe-mono-weak-diff}
\end{align}
Using the above definitions of operators we can write the differentiated system \eqref{eq:fe-mono-weak-diff}, \eqref{eq:monodd-mmmfe2}--\eqref{eq:monodd-mmmfe7}
as 
\begin{eqnarray}
\frac{d}{dt}\mathcal{N}\dot{x}(t)+{\cal M}\dot{x}(t) = {\cal F}(t), & 0<t<T,\label{eq:eq:mod_matrix_form}
\end{eqnarray}
where 
\begin{gather*}
\dot{x}=\left(\begin{array}{c}
\s_{h}\\
\dot{u}_h\\
\dot{\gamma}_h\\
z_{h}\\
p_{h}\\
\dot{\lambda}^u_H\\
\lHp
\end{array}\right), \ N=\left(\begin{array}{ccccccc}
	A_{\s\s} & 0 & 0 & 0 & A_{\s p}^{T} & 0 & 0\\
	0 & 0 & 0 & 0 & 0 & 0 & 0\\
	0 & 0 & 0 & 0 & 0 & 0 & 0\\
	0 & 0 & 0 & 0 & 0 & 0 & 0\\
	A_{\s p} & 0 & 0 & 0 & A_{pp} & 0 & 0\\
	0 & 0 & 0 & 0 & 0 & 0 & 0\\
	0 & 0 & 0 & 0 & 0 & 0 & 0
	\end{array}\right), \\
	M=\left(\begin{array}{ccccccc}
	0 & A_{\s u}^{T} & A_{\s\gamma}^{T} & 0 & 0 & -A_{\s\lambda}^{T} & 0\\
	-A_{\s u} & 0 & 0 & 0 & 0 & 0 & 0\\
	-A_{\s\gamma} & 0 & 0 & 0 & 0 & 0 & 0\\
	0 & 0 & 0 & A_{zz} & A_{zp}^{T} & 0 & A_{z\lambda}^{T}\\
	0 & 0 & 0 & -A_{zp} & 0 & 0 & 0\\
	A_{\s\lambda} & 0 & 0 & 0 & 0 & 0 & 0\\
	0 & 0 & 0 & A_{z\lambda} & 0 & 0 & 0
  \end{array}\right), \
  \mathcal{F}=\left(\begin{array}{c}
	0\\
	-f\\
	0\\
	0\\
	g\\
	0\\
	0
	\end{array}\right).
\end{gather*}
The space $E$ is $\X_{h}\times V_{h}\times\mathbb{Q}_{h}\times Z_{h}\times W_{h}\times\Lambda_{H}^u\times\Lambda_H^p$. The dual space $E_{b}^{'}$ is given by $L^{2}(\Omega,\mathbb{M})\times0\times0\times0\times L^{2}(\Omega)\times0\times0$
	and the condition ${\cal F}\in W^{1,1}\left(0,T;E_{b}^{'}\right)$
	implies that non-zero source terms can appear only in equations with
	time derivatives. This means we have to take $f=0$ in our case. We
	can fix this issue by considering an auxiliary problem that, for each
	$t\in(0,T]$, solves the system 
	\begin{equation}
	\left(\begin{array}{cccc}
	A_{\s\s} & A_{\s u}^{T} & A_{\s\gamma}^{T} & -A_{\s\lambda}^{T}\\
	-A_{\s u} & 0 & 0 & 0\\
	-A_{\s\gamma} & 0 & 0 & 0\\
	A_{\s\lambda} & 0 & 0 & 0
	\end{array}\right)\left(\begin{array}{c}
	\s_{h}^{f}\\
	\dt u_{h}^{f}\\
	\dt\gamma_{h}^{f}\\
	\dt\lambda_{H}^{u,f}
	\end{array}\right)=\left(\begin{array}{c}
	0\\
	-f\\
	0\\
	0
	\end{array}\right).\label{eq:aux-elast-wellpos}
	\end{equation}
	Such an auxiliary system (\ref{eq:aux-elast-wellpos}) is well-posed
	and the proof can be found in \cite{eldar_elastdd}. Now we can subtract
	the solution to (\ref{eq:aux-elast-wellpos}) from the original system
	of equations (\ref{eq:monodd-mmmfe1})$\--$(\ref{eq:monodd-mmmfe7})
	to obtain the modified right hand side ${\cal F}=\left(A_{\s\s}\left(\s_{h}^{f}-\dt\s_{h}^{f}\right),0,0,0,q-A_{\s p}\dt\s_{h}^{f},0,0\right)^{T}$. Thus, it is enough to analyze \eqref{eq:eq:mod_matrix_form} with $f=0$.

In order to apply Theorem \ref{thm:algebraic-pde} for system \eqref{eq:eq:mod_matrix_form}, we need to prove the range condition $Range({\cal N+M}) = E_{b}^{'}$ and construct compatible initial data $\dot{x}_{0}\in D$, i.e., ${\cal M}\dot{x}_{0}\in E_{b}^{'}$. This is done in the following two lemmas.

\begin{lem}\label{lem:range} If assumption \eqref{eq:mortar_assumption} holds, for the system \eqref{eq:eq:mod_matrix_form} it holds that 
$Range({\cal N+M})=E_{b}^{'}$.
\end{lem}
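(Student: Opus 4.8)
The plan is to show surjectivity of $\mathcal{N}+\mathcal{M}$ by fixing an arbitrary right-hand side $(g_\sigma,0,0,0,g_p,0,0)^T \in E_b'$ and constructing a preimage $\dot{x}=(\sigma_h,\dot u_h,\dot\gamma_h,z_h,p_h,\dot\lambda^u_H,\lambda^p_H)^T \in E$. Written out, the system $(\mathcal{N}+\mathcal{M})\dot x = \mathcal{F}$ reads: $\As\sigma_h + \Asp^T p_h + \Asu^T\dot u_h + \Asg^T\dot\gamma_h - \Asl^T\dot\lambda^u_H = g_\sigma$; $-\Asu\sigma_h = 0$; $-\Asg\sigma_h = 0$; $\Az z_h + \Azp^T p_h + A_{z\lambda}^T\lambda^p_H = 0$; $\Asp\sigma_h + \Ap p_h - \Azp z_h = g_p$; $\Asl\sigma_h = 0$; $A_{z\lambda} z_h = 0$. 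The second, third, and sixth equations say $\sigma_h\in\X_{h,0}$ with $\dvr_h\sigma_h=0$ and $(\sigma_h,\xi)=0$ for all $\xi\in\mathbb{Q}_h$; the last says $z_h\in Z_{h,0}$; and the fourth says $\dvr_h z_h \in W_h$ is determined (the $\Azp$ block) consistently. So the natural route is: first solve a coupled elasticity-type saddle point system in $\X_{h,0}\times V_h\times\mathbb{Q}_h$ together with a Darcy-type system in $Z_{h,0}\times W_h$, using the inf-sup conditions of Lemma~\ref{thm:inf-sup-weak} (equations \eqref{eq:inf-sup-elast-weak}, \eqref{eq:inf-sup-darcy-weak}) and the mortar inf-sup conditions \eqref{eq:mortar-p-inf-sup}, \eqref{eq:mortar-u-inf-sup} to recover $\dot\lambda^u_H$ and $\lambda^p_H$.

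Concretely, I would reduce to a square linear system on the finite-dimensional space $E$ and prove injectivity — i.e., that $\mathcal{F}=0$ forces $\dot x=0$ — which by dimension count gives surjectivity. Testing the homogeneous system with $\dot x$ itself: the $\Asu^T/\Asu$, $\Asg^T/\Asg$, $\Asl^T/\Asl$, $\Azp^T/\Azp$, $A_{z\lambda}^T/A_{z\lambda}$ blocks cancel in pairs (antisymmetric coupling), leaving $(\As\sigma_h,\sigma_h) + 2(\Asp\sigma_h,p_h) + (\Ap p_h,p_h) + (\Az z_h,z_h) = 0$. The first three terms form the quadratic form $(\mathcal{N}\dot x,\dot x)$, which is the positive semidefinite form $\big(A(\sigma_h+\alpha p_h I),\sigma_h+\alpha p_h I\big) + c_0(p_h,p_h)$ — nonnegative by \eqref{eq:coercivity-elast-1} — and $(\Az z_h,z_h)=(K^{-1}z_h,z_h)\ge 0$ by \eqref{eq:coercivity-flow-1}. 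Hence $z_h=0$, and $\sigma_h + \alpha p_h I = 0$ combined with $c_0(p_h,p_h)=0$ (using $c_0>0$) gives $p_h=0$ and then $\sigma_h=0$. Feeding $\sigma_h=0$, $z_h=0$, $p_h=0$ back into the first and fourth equations of the homogeneous system gives $\Asu^T\dot u_h+\Asg^T\dot\gamma_h-\Asl^T\dot\lambda^u_H=0$ and $A_{z\lambda}^T\lambda^p_H=0$; the weakly-continuous-space inf-sup condition \eqref{eq:inf-sup-elast-weak} applied over $\X_{h,0}$ forces $\dot u_h=0$, $\dot\gamma_h=0$ (since $\tau$ ranges over $\X_{h,0}$ where $\Asl^T\dot\lambda^u_H$ acts trivially), then the displacement mortar inf-sup \eqref{eq:mortar-u-inf-sup} over all of $\X_h$ forces $\dot\lambda^u_H=0$, and symmetrically the pressure mortar inf-sup \eqref{eq:mortar-p-inf-sup} forces $\lambda^p_H=0$. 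This proves injectivity, hence the range condition on the finite-dimensional space $E$.

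The main obstacle is the bookkeeping of which test space to use at each stage so that the Lagrange-multiplier couplings genuinely decouple: one must first test against $\tau\in\X_{h,0}$ (where the $\Asl^T$ term vanishes by definition of $\X_{h,0}$) to kill $\dot u_h,\dot\gamma_h$ via \eqref{eq:inf-sup-elast-weak}, and only afterward test against general $\tau\in\X_h$ to extract $\dot\lambda^u_H$ via \eqref{eq:mortar-u-inf-sup} — and likewise the Darcy block must be handled in the order $p_h,z_h$ then $\lambda^p_H$. A secondary subtlety is the degeneracy of $\mathcal{N}$ when $c_0$ is small or the $\sigma+\alpha pI$ form is merely semidefinite on a subspace: here $c_0>0$ is assumed, so this does not bite, but the argument should note that it is exactly the $c_0>0$ hypothesis and the coercivity \eqref{eq:coercivity-elast-1} that separate $p_h$ and $\sigma_h$. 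I expect the write-up to mirror the corresponding argument in \cite{msfmfe-Biot} and \cite{eldar_elastdd}, adapted to the mortar setting by invoking Lemma~\ref{thm:inf-sup-weak} and the mortar inf-sup lemmas in place of their non-mortar counterparts.
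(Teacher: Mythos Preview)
Your proposal is correct and follows essentially the same route as the paper: reduce to injectivity of $\mathcal{N}+\mathcal{M}$ on the finite-dimensional space $E$, test the homogeneous system against $\dot x$ to obtain $\|A^{1/2}(\sigma_h+\alpha p_h I)\|^2 + c_0\|p_h\|^2 + \|K^{-1/2}z_h\|^2 = 0$, use coercivity and $c_0>0$ to kill $\sigma_h,p_h,z_h$, then apply the weakly-continuous inf-sup \eqref{eq:inf-sup-elast-weak} over $\X_{h,0}$ to kill $\dot u_h,\dot\gamma_h$, and finally the mortar inf-sup conditions \eqref{eq:mortar-u-inf-sup}, \eqref{eq:mortar-p-inf-sup} over the full $\X_h$, $Z_h$ to kill $\dot\lambda^u_H,\lambda^p_H$. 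Your explicit articulation of the two-stage testing order (first $\X_{h,0}$, then $\X_h$) and of the role of $c_0>0$ is a useful addition, but the argument is the paper's.
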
  

\begin{proof}
The statement of the lemma can be established by proving that the following homogeneous system has only the zero solution: $\ensuremath{(\hat{\sigma}_{h},\hat{u}_{h},\hat{\gamma}_{h},\hat{z}_{h},\hat{p}_{h},\hat{\lambda}_{H})}\in\X_{h}\times V_{h}\times\mathbb{Q}_{h}\times Z_{h}\times W_{h}\times\Lambda_{H}$ such that
\begin{align}
	& \inp[A\left(\hat{\sigma}_{h}+\a\hat{p}_{h}I\right)]{\tau} + \inp[\hat{u}_{h}]{\dvr_h{\tau}} + \inp[\hat{\gamma}_{h}]{\tau} -\sum_{i=1}^{N}\gnp[\hat{\lambda}_{H}^{u}]{\t\,n_{i}}_{\Gamma_{i}}=0, & \forall\tau\in\X_{h},\label{eq:well-pose-aux-1}\\
	& \inp[\dvr_h{\hat{\sigma}_{h}}]{v} = 0, & \forall v\in V_{h}, \\
	& \inp[\hat{\sigma}_{h}]{\xi}=0, & \forall\xi\in\mathbb{Q}_{h}, \\
	& \inp[\K\hat{z}_{h}]{\zeta} - \inp[\hat{p}_{h}]{\dvr_h{\zeta}} + \sum_{i=1}^{N}\gnp[\hat{\lambda}_{H}^{p}]{\zeta\cdot n_{i}}_{\Gamma_{i}}=0, & \forall\zeta\in Z_{h},\label{eq:well-pose-aux-4}\\
	& c_{0}\inp[\dt{\hat{p}_{h}}]{w} + \a\inp[ A\left(\hat{\sigma}_{h} + \a\hat{p}_{h}I\right)]{wI} + \inp[\dvr_h{\hat{z}_{h}}]{w} = 0, & \forall w\in W_{h}, \\
	& \sum_{i=1}^{N}\gnp[\hat{\sigma}_{h}n_{i}]{\mu^{u}}_{\G_{i}}=0, & \forall\mu^{u}\in\Lambda_{H}^{u}, \\
	& \sum_{i=1}^{N}\gnp[\hat{z}_{h}\cdot n_{i}]{\mu^{p}}_{\G_{i}}=0, & \forall\mu^{p}\in\Lambda_{H}^{p}.
\end{align}
Taking test functions $(\tau,v,\xi,\zeta,w,\mu^{u},\mu^{p})=(\hat{\sigma}_{h},\hat{u}_{h},\hat{\gamma}_{h},\hat{z}_{h},\hat{p}_{h},\hat{\lambda}_{H}^{u},\hat{\lambda}_{H}^{p})$
in the above system and adding the equations together gives $\|A^{\frac{1}{2}}\left(\hat{\sigma}_{h}+\a\hat{p}_{h}I\right)\|^{2}+c_{0}\|\hat{p}_{h}\|^{2}+\|K^{-\frac{1}{2}}\hat{z}_{h}\|^{2}=0$.
The coercivity of $A$, (\ref{eq:coercivity-elast-1}), and $K$, (\ref{eq:coercivity-flow-1}),
give $\hat{\sigma}_{h} = 0$, $\hat{p}_{h}=0$, and $\hat{z}_{h}=0$.
The inf-sup condition with respect to the weakly continuous space $\X_{h,0}$ \eqref{eq:inf-sup-elast-weak} along with \eqref{eq:well-pose-aux-1} implies $\hat{u}_{h}=0$ and $\hat{\gamma}_{h}=0$. Finally, \eqref{eq:mortar-p-inf-sup}
combined with (\ref{eq:well-pose-aux-1}) implies $\hat{\lambda}_{H}^{u}=0$,
and (\ref{eq:mortar-u-inf-sup}) combined with (\ref{eq:well-pose-aux-4})
implies $\hat{\lambda}_{H}^{p}=0$.
\end{proof}

\begin{lem}\label{lem:IC}
Let the assumption \eqref{eq:mortar_assumption} hold. Given initial data $p_0 \in H^1(\Omega)$ with $K\nabla p_0 \in H(\dvr;\Omega)$, there exists initial data $\dot{x}_{0}$ for the system \eqref{eq:eq:mod_matrix_form} such that ${\cal M}\dot{x}_{0}\in E_{b}^{'}$.
\end{lem}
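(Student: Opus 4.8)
The plan is to construct $\dot{x}_{0}=(\s_{h,0},\dot{u}_{h,0},\dot{\g}_{h,0},z_{h,0},p_{h,0},\dot{\lambda}^{u}_{H,0},\lambda^{p}_{H,0})$ from two auxiliary discrete saddle-point problems and then read off $\mathcal{M}\dot{x}_{0}\in E_{b}'$ from the block structure of $M$. Since the seminorm $|\cdot|_{b}$ only sees the $\s_{h}$ and $p_{h}$ slots, $\mathcal{M}\dot{x}_{0}\in E_{b}'$ is equivalent to the vanishing of the second, third, fourth, sixth, and seventh components of $M\dot{x}_{0}$; the first and fifth components automatically represent elements of $L^{2}(\Omega,\mathbb{M})$ and $L^{2}(\Omega)$, respectively, because the subdomain spaces are finite-dimensional. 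These five conditions decouple: rows two, three, and six constrain only $\s_{h,0}$, rows four and seven constrain only $(z_{h,0},p_{h,0},\lambda^{p}_{H,0})$, and $\dot{u}_{h,0},\dot{\g}_{h,0},\dot{\lambda}^{u}_{H,0}$ are unconstrained, so I would set the latter three to zero.

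For the flow data I would take $(z_{h,0},p_{h,0},\lambda^{p}_{H,0})\in Z_{h}\times W_{h}\times\Lambda^{p}_{H}$ to be the multiscale mortar mixed finite element approximation of the Darcy problem with Darcy law $\K z_{0}+\nabla p_{0}=0$ in $\Omega$ and boundary conditions \eqref{biot-bc-2}; that is, \eqref{eq:monodd-mmmfe4} and \eqref{eq:monodd-mmmfe7} with $(z_{h},p_{h},\lambda^{p}_{H})$ replaced by $(z_{h,0},p_{h,0},\lambda^{p}_{H,0})$, together with the steady mass balance $\inp[\dvr_h z_{h,0}]{w}=-\inp[\dvr(K\nabla p_{0})]{w}$ for all $w\in W_{h}$. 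The right-hand side of the mass balance lies in $L^{2}(\Omega)$ precisely because of the hypothesis $K\nabla p_{0}\in H(\dvr;\Omega)$. This problem is well-posed: coercivity of $\K$ from \eqref{eq:coercivity-flow-1}, the weakly-continuous inf-sup condition \eqref{eq:inf-sup-darcy-weak} for $Z_{h,0}\times W_{h}$, and the pressure mortar inf-sup condition \eqref{eq:mortar-p-inf-sup} give unique solvability by standard saddle-point arguments (cf.\ \cite{arbogast2000mixed}). The equations \eqref{eq:monodd-mmmfe4} and \eqref{eq:monodd-mmmfe7} are exactly rows four and seven of $M\dot{x}_{0}$, which therefore vanish; the mass balance is the unconstrained fifth row and is harmless.

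For the stress data I would take $(\s_{h,0},u_{h,0},\g_{h,0},\lambda^{u}_{H,0})$ to be the mortar mixed finite element solution of the linear elasticity problem at $t=0$ with body force $f(0)$ and pressure load $\a p_{h,0}I$, i.e.\ \eqref{eq:monodd-mmmfe1}--\eqref{eq:monodd-mmmfe3} and \eqref{eq:monodd-mmmfe6} with $p_{h}$ replaced by $p_{h,0}$; this is well-posed by coercivity of $A$ from \eqref{eq:coercivity-elast-1}, the inf-sup condition \eqref{eq:inf-sup-elast-weak}, and the displacement mortar inf-sup condition \eqref{eq:mortar-u-inf-sup}. The relations $\inp[\s_{h,0}]{\xi}=0$ and $\sum_{i}\gnp[\s_{h,0}n_{i}]{\mu^{u}}_{\Gamma_{i}}=0$ kill rows three and six, while $\inp[\dvr_h\s_{h,0}]{v}=-\inp[f(0)]{v}$ handles row two after the static contribution from \eqref{eq:aux-elast-wellpos} at $t=0$ is subtracted; equivalently, working directly with the reduced ($f\equiv0$) system, one invokes \eqref{eq:inf-sup-elast-weak} to produce $\s_{h,0}\in\X_{h,0}$ with $\dvr_h\s_{h,0}$ orthogonal to $V_{h}$ and $\s_{h,0}$ orthogonal to $\mathbb{Q}_{h}$. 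Setting $\dot{u}_{h,0}=\dot{\g}_{h,0}=\dot{\lambda}^{u}_{H,0}=0$ completes the construction of $\dot{x}_{0}$. Since the fifth block of $\mathcal{N}$ is $A_{\s p}\s_{h,0}+A_{pp}p_{h,0}$ with $A_{pp}$ coercive, $\mathcal{N}\dot{x}_{0}$ carries exactly the prescribed discrete initial pressure, as needed when passing from \eqref{eq:eq:mod_matrix_form} back to \eqref{eq:monodd-mmmfe1}--\eqref{eq:monodd-mmmfe7}.

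The hard part is not the construction but the bookkeeping of the $f$-reduction: one has to ensure the momentum row of $M\dot{x}_{0}$ truly vanishes for the reduced system to which Theorem~\ref{thm:algebraic-pde} is applied, which means carefully tracking which static problem \eqref{eq:aux-elast-wellpos} is subtracted and with which sign. A secondary technical point is the precise identification of $E_{b}'$ in this degenerate setting---in particular, checking that the apparent freedom in $\dot{u}_{h,0},\dot{\g}_{h,0},\dot{\lambda}^{u}_{H,0}$ is genuine and that the first component of $M\dot{x}_{0}$ indeed lands in the $L^{2}(\Omega,\mathbb{M})$ slot---which rests on $|\cdot|_{b}$ annihilating those components together with the finite-dimensionality of $\X_{h}$.
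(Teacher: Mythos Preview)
Your proof is correct for Lemma~\ref{lem:IC} but takes a genuinely different route from the paper. The paper first builds compatible \emph{continuous} initial data $(\sigma_0,u_0,\gamma_0,z_0,p_0)$ from $p_0$ (solving the continuous elasticity equations with $p=p_0$ and setting $z_0=-K\nabla p_0$), forms $\tilde{x}_0$ by appending the traces $\lambda_0^u=u_0|_\Gamma$, $\lambda_0^p=p_0|_\Gamma$, and then defines the discrete data $x_0$ by the single coupled elliptic projection $(\mathcal{N}+\mathcal{M})x_0=(\mathcal{N}+\mathcal{M})\tilde{x}_0$, whose unique solvability is precisely the range condition already proved in Lemma~\ref{lem:range}. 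The conclusion $\mathcal{M}x_0\in E_b'$ then follows from the one-line identity $\mathcal{M}x_0=(\mathcal{N}+\mathcal{M})\tilde{x}_0-\mathcal{N}x_0$, since the continuous data make $(\mathcal{N}+\mathcal{M})\tilde{x}_0$ land in $E_b'$ (after the $f$-reduction). Finally $\dot{x}_0=(\sigma_{h,0},0,0,z_{h,0},p_{h,0},0,\lambda_{H,0}^p)$ inherits $\mathcal{M}\dot{x}_0\in E_b'$.

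You instead decouple the construction into two independent discrete saddle-point problems---a mortar Darcy solve for $(z_{h,0},p_{h,0},\lambda_{H,0}^p)$ and a mortar elasticity solve for $\sigma_{h,0}$---and verify the five block conditions of $\mathcal{M}\dot{x}_0\in E_b'$ row by row. This is more explicit and avoids passing through continuous data, but it requires separately invoking well-posedness of the two mortar sub-problems rather than reusing Lemma~\ref{lem:range}. The paper's route is shorter for the lemma itself and, more importantly, its specific elliptic-projection definition of $x_0$ is later reused verbatim in the stability bound \eqref{eq:stab-bnd-13} and the initial-error estimate \eqref{eq:err-proof-41}, where a steady-state version of the same resolvent argument controls the discrete initial data by the continuous data. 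Your decoupled construction would yield different (though equally valid) initial data, and at those later points you would need to supply the corresponding stability and approximation bounds from the separate Darcy and elasticity mortar analyses instead.
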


\begin{proof}
We first construct compatible initial data $\ensuremath{(\sigma_{0},u_{0},\gamma_{0},z_{0},p_{0})}$ to the continuous system (\ref{eq:cts1-1})--(\ref{eq:cts5-1}) from the initial data $p_{0}$ as follows:
\begin{enumerate}
\item Solve equations (\ref{eq:cts1-1})$\--$(\ref{eq:cts3-1}) using $p=p_{0}$
as given data to obtain $\sigma_{0},u_{0},\gamma_{0}$.
\item Set $z_0=-K\nabla p_0$ and use integration by parts to show that \eqref{eq:cts4-1} holds.
\end{enumerate}
Next, define $\tilde{x}_{0}=\ensuremath{(\sigma_{0},u_{0},\gamma_{0},z_{0},p_{0},\lambda_{0}^{u},\lambda_{0}^{p})}$, where $\lambda_{0}^{u}=u_{0}|_{\G}$ and $\lambda_{0}^{p}=p_{0}|_{\G}$.
Take the initial data $x_{0} = (\sigma_{h,0},u_{h,0},\gamma_{h,0},z_{h,0},p_{h,0},\lambda_{H,0}^{u},\lambda_{H,0}^{p})$ for the system (\ref{eq:monodd-mmmfe1})$\--$(\ref{eq:monodd-mmmfe7}) to be the elliptic projection of $\tilde{x}_{0}$:
\begin{equation}
  \left(\mathcal{N+M}\right)x_{0}=\left(\mathcal{N+M}\right)\tilde{x}_{0},
  \label{eq:well-pose-initial-data}
\end{equation}
The above problem has a unique solution, due to the argument in the proof of Lemma~\ref{lem:range}.
With the reduction of the problem to the case with $f=0$, we have $\left(\mathcal{N+M}\right)\tilde{x}_{0}\in E_{b}^{'}$ and, due to \eqref{eq:well-pose-initial-data},
${\cal M}x_{0}=\left(\mathcal{N+M}\right)\tilde{x}_{0}-{\cal N}x_{0}\in E_{b}^{'}$. For the differentiated system (\ref{eq:eq:mod_matrix_form}) we
take the initial data $\dot{x}_{0}$ to be $(\sigma_{h,0},0,0,z_{h,0},p_{h,0},0,\lambda_{H,0}^p)$,
which also satisfies ${\cal M}\dot{x}_{0}\in E_{b}^{'}$. We note that
the initial data $u_{h,0}$, $\gamma_{h,0}$, and $\lambda_{H,0}^{u}$ are
not required for solving (\ref{eq:eq:mod_matrix_form}), but are used to recover the solution to the original problem.
\end{proof}

We are now ready to establish that the system \eqref{eq:eq:mod_matrix_form} has a solution using Theorem~\ref{thm:algebraic-pde}.

\begin{lem}\label{lem:well-posed}
Let assumption \eqref{eq:mortar_assumption} hold. For each $(f,g)\in W^{1,\infty}\left(0,T;L^{2}(\Omega;{\mathbb R}^d)\right)\times W^{1,\infty}\left(0,T;L^{2}(\Omega)\right)$ and $p_0 \in H^1(\Omega)$ with $K\nabla p_0 \in H(\dvr;\Omega)$, the system \eqref{eq:eq:mod_matrix_form} has a solution such that $\s_{h}(0)=\s_{h,0}$ and $p_{h}(0)=p_{h,0}$, with the initial data constructed in Lemma~\ref{lem:IC}. In addition, $z_{h}(0)=z_{h,0}$ and $\lambda_H^p(0) = \lambda_{H,0}^p$.
\end{lem}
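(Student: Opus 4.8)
The plan is to verify the hypotheses of Theorem~\ref{thm:algebraic-pde} for the (reduced, $f=0$) system \eqref{eq:eq:mod_matrix_form} and then read off the stated initial conditions from the conclusion of that theorem. The three structural hypotheses on the operators are handled by direct computation. The operator $\mathcal{N}$ is linear, and the placement of the symmetric blocks $A_{\sigma\sigma}$, $A_{pp}$ together with $A_{\sigma p}$ and $A_{\sigma p}^{T}$ makes it symmetric; pairing $\mathcal{N}\dot x$ with $\dot x$ gives
\[
\mathcal{N}\dot x(\dot x)=\|A^{\frac{1}{2}}(\sigma_h+\alpha p_h I)\|^{2}+c_0\|p_h\|^{2}\ge 0,
\]
using \eqref{eq:coercivity-elast-1}, so $\mathcal{N}$ is monotone. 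For $\mathcal{M}$, I would pair $\mathcal{M}\dot x$ with $\dot x$: the couplings involving $A_{\sigma u}$, $A_{\sigma\gamma}$, $A_{\sigma\lambda}$ and $A_{zp}$ all cancel because these blocks are placed skew-symmetrically within $M$, while the $A_{z\lambda}$ coupling contributes $2\sum_{i}\langle z_h\cdot n_i,\lambda_H^{p}\rangle_{\Gamma_i}$, which vanishes once we restrict to the domain $D$, since $\dot x\in D$ forces the seventh component of $\mathcal{M}\dot x$ to be zero, i.e. $z_h\in Z_{h,0}$. Hence $\mathcal{M}\dot x(\dot x)=\|K^{-\frac{1}{2}}z_h\|^{2}\ge 0$ by \eqref{eq:coercivity-flow-1}, so $\mathcal{M}$ is monotone. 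The remaining two hypotheses of Theorem~\ref{thm:algebraic-pde} — the range condition $\mathrm{Range}(\mathcal{N}+\mathcal{M})=E_b'$ and the existence of compatible initial data $\dot x_0\in D$ with $\mathcal{M}\dot x_0\in E_b'$ — are precisely Lemma~\ref{lem:range} and Lemma~\ref{lem:IC}. Finally, I would check $\mathcal{F}\in W^{1,1}(0,T;E_b')$: after the reduction $\mathcal{F}=(A_{\sigma\sigma}(\sigma_h^{f}-\partial_t\sigma_h^{f}),0,0,0,g-A_{\sigma p}\partial_t\sigma_h^{f},0,0)^{T}$ is nonzero only in the slots that $E_b'$ sees, and its time regularity follows from that of $g$ and of $\sigma_h^{f}$, which inherits the time regularity of $f$ through the bounded solution operator of the auxiliary problem \eqref{eq:aux-elast-wellpos}.

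Once these hypotheses are in place, Theorem~\ref{thm:algebraic-pde} yields a solution $\dot x$ of \eqref{eq:eq:mod_matrix_form} with $\mathcal{N}\dot x\in W^{1,\infty}(0,T;E_b')$, $\dot x(t)\in D$ for all $t\in[0,T]$, and $\mathcal{N}\dot x(0)=\mathcal{N}\dot x_0$, where $\dot x_0=(\sigma_{h,0},0,0,z_{h,0},p_{h,0},0,\lambda_{H,0}^{p})$ is the initial datum from Lemma~\ref{lem:IC}. Written out component by component, $\mathcal{N}\dot x(0)=\mathcal{N}\dot x_0$ reads $A_{\sigma\sigma}\sigma_h(0)+A_{\sigma p}^{T}p_h(0)=A_{\sigma\sigma}\sigma_{h,0}+A_{\sigma p}^{T}p_{h,0}$ and $A_{\sigma p}\sigma_h(0)+A_{pp}p_h(0)=A_{\sigma p}\sigma_{h,0}+A_{pp}p_{h,0}$; since the quadratic form of the associated block equals $\mathcal{N}\dot x(\dot x)$ restricted to the $(\sigma_h,p_h)$ components and vanishes only for $\sigma_h=0$, $p_h=0$, the block is injective, so $\sigma_h(0)=\sigma_{h,0}$ and $p_h(0)=p_{h,0}$. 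For the statements $z_h(0)=z_{h,0}$ and $\lambda_H^{p}(0)=\lambda_{H,0}^{p}$, which are not seen by $\mathcal{N}$, I would use that $\dot x(0)\in D$ forces the fourth and seventh components of $\mathcal{M}\dot x(0)$ to vanish, i.e. $(K^{-1}z_h(0),\zeta)-(p_h(0),\dvr_h\zeta)+\sum_{i}\langle\lambda_H^{p}(0),\zeta\cdot n_i\rangle_{\Gamma_i}=0$ for all $\zeta\in Z_h$ and $z_h(0)\in Z_{h,0}$; with $p_h(0)=p_{h,0}$ this is the elliptic Darcy subsystem for $(z_h(0),\lambda_H^{p}(0))$, uniquely solvable by the coercivity of $K^{-1}$ on $Z_{h,0}$ and the pressure-mortar inf-sup condition \eqref{eq:mortar-p-inf-sup}. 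The data $(z_{h,0},\lambda_{H,0}^{p})$ from Lemma~\ref{lem:IC} satisfies the same subsystem (checked by integration by parts, using $z_0=-K\nabla p_0$ and $\lambda_0^{p}=p_0|_{\Gamma}$), so uniqueness gives the two equalities.

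The step I expect to demand the most care is the recovery of the initial values of $z_h$ and $\lambda_H^{p}$: because these unknowns do not enter the degenerate operator $\mathcal{N}$, the identity $\mathcal{N}\dot x(0)=\mathcal{N}\dot x_0$ says nothing about them directly, and one must instead exploit the constraints built into $\dot x(0)\in D$ together with the unique solvability of the elliptic Darcy problem at $t=0$ and the consistency of the constructed initial data. A secondary point to get right is the exact time regularity that the source terms $f$ and $g$ must carry in order for the modified right-hand side $\mathcal{F}$ to lie in $W^{1,1}(0,T;E_b')$, which is the form of the hypothesis required by Theorem~\ref{thm:algebraic-pde}.
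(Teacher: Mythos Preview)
Your proposal is correct and follows essentially the same route as the paper: verify the hypotheses of Theorem~\ref{thm:algebraic-pde} (monotonicity of $\mathcal{N}$ and $\mathcal{M}$, the range condition from Lemma~\ref{lem:range}, compatible initial data from Lemma~\ref{lem:IC}, regularity of $\mathcal{F}$), apply the theorem, and then recover the remaining initial values from the constraint equations at $t=0$. Your treatment is in fact more explicit than the paper's in two places: you spell out why the non-skew $A_{z\lambda}$ block does not obstruct monotonicity of $\mathcal{M}$ (via $z_h\in Z_{h,0}$ on the domain $D$), and you phrase the recovery of $z_h(0)$ and $\lambda_H^p(0)$ as unique solvability of the Darcy subsystem at $t=0$, whereas the paper simply says ``take $t\to 0$'' in \eqref{eq:fe-mono-weak-4} and \eqref{eq:monodd-mmmfe4} and invokes \eqref{eq:mortar-p-inf-sup}; these are the same argument.
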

  
\begin{proof}
  We first note that the arguments in the proof of Lemma~\ref{lem:range} can be used to show that ${\cal N}$ and ${\cal M}$ are non-negative and therefore, due to linearity, monotone. Using Lemmas~\ref{lem:range} and \ref{lem:IC}, an application of Theorem~\ref{thm:algebraic-pde} implies the existence of a solution $\dot{x}=\ensuremath{(\sigma_{h},\dot{u}_{h},\dot{\gamma}_{h},z_{h},p_{h},\dot\lambda_H^u,\lambda_H^p)}$ to \eqref{eq:eq:mod_matrix_form} such that $\s_{h}(0)=\s_{h,0}$ and $p_{h}(0)=p_{h,0}$. Next, it
is easy to see that $z_{h}(0)=z_{h,0}$ by taking $t\to0$ in (\ref{eq:fe-mono-weak-4})
and using the fact that $z_{h,0}$ and $p_{h,0}$ satisfy (\ref{eq:fe-mono-weak-4}). Finally, taking $t\to0$ in \eqref{eq:monodd-mmmfe4}, using that $z_{h,0}$, $p_{h,0}$, and $\lambda_{H,0}^p$ satisfy \eqref{eq:monodd-mmmfe4}, and employing the inf-sup condition \eqref{eq:mortar-p-inf-sup}, we conclude that $\lambda_H^p(0) = \lambda_{H,0}^p$.
\end{proof}

Next, we prove the solvability of the original system \eqref{eq:monodd-mmmfe1}--\eqref{eq:monodd-mmmfe7}.

\begin{thm}\label{thm:well_posedness-mono-mortar}
  Let assumption \eqref{eq:mortar_assumption} hold. For each
  $(f,g)\in W^{1,\infty}\left(0,T;L^{2}(\Omega;{\mathbb R}^d)\right)\times W^{1,\infty}\left(0,T;L^{2}(\Omega)\right)$
    and $p_0 \in H^1(\Omega)$ with $K\nabla p_0 \in H(\dvr;\Omega)$, the system \eqref{eq:monodd-mmmfe1}--\eqref{eq:monodd-mmmfe7}
has a unique solution such that $\s_{h}(0)=\s_{h,0}$ and $p_{h}(0)=p_{h,0}$, where the initial data is constructed in Lemma~\ref{lem:IC}. In addition, $u_h(0) = u_{h,0}$, $\gamma_h(0) = \gamma_{h,0}$,
$z_{h}(0)=z_{h,0}$, $\lambda_H^u(0) = \lambda_{H,0}^u$, and $\lambda_H^p(0) = \lambda_{H,0}^p$.
\end{thm}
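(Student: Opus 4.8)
The plan is to deduce Theorem~\ref{thm:well_posedness-mono-mortar} from the already-established Lemma~\ref{lem:well-posed}, which produces a solution to the differentiated system \eqref{eq:eq:mod_matrix_form}. The key observation is that \eqref{eq:eq:mod_matrix_form} is the time-differentiated version of \eqref{eq:monodd-mmmfe1}--\eqref{eq:monodd-mmmfe7}, so we must integrate equation \eqref{eq:fe-mono-weak-diff} back in time and check that the resulting function satisfies the undifferentiated equation \eqref{eq:monodd-mmmfe1}, using the compatibility of the initial data built in Lemma~\ref{lem:IC}. First I would take the solution $\dot x = (\sigma_h, \dot u_h, \dot\gamma_h, z_h, p_h, \dot\lambda_H^u, \lambda_H^p)$ from Lemma~\ref{lem:well-posed} and define
\[
u_h(t) = u_{h,0} + \int_0^t \dot u_h(s)\,ds, \quad
\gamma_h(t) = \gamma_{h,0} + \int_0^t \dot\gamma_h(s)\,ds, \quad
\lambda_H^u(t) = \lambda_{H,0}^u + \int_0^t \dot\lambda_H^u(s)\,ds,
\]
so that $u_h, \gamma_h, \lambda_H^u$ are absolutely continuous in time with the prescribed initial values, and the remaining components are taken directly from $\dot x$.

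Next I would verify that $(\sigma_h, u_h, \gamma_h, z_h, p_h, \lambda_H^u, \lambda_H^p)$ solves \eqref{eq:monodd-mmmfe1}--\eqref{eq:monodd-mmmfe7}. Equations \eqref{eq:monodd-mmmfe2}, \eqref{eq:monodd-mmmfe3}, \eqref{eq:monodd-mmmfe5}, \eqref{eq:monodd-mmmfe6}, \eqref{eq:monodd-mmmfe7} are components of \eqref{eq:eq:mod_matrix_form} itself (none of them was differentiated) and \eqref{eq:monodd-mmmfe4} holds as a component of the system as well, so these are immediate. The only equation requiring work is \eqref{eq:monodd-mmmfe1}. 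Define, for each $\tau \in \X_h$, the scalar function
\[
R_\tau(t) := \inp[A(\sigma_h(t) + \alpha p_h(t) I)]{\tau} + \inp[u_h(t)]{\dvr_h \tau} + \inp[\gamma_h(t)]{\tau} - \sum_{i=1}^N \gnp[\lambda_H^u(t)]{\tau n_i}_{\Gamma_i}.
\]
By \eqref{eq:fe-mono-weak-diff}, which is the first block row of \eqref{eq:eq:mod_matrix_form}, we have $R_\tau'(t) = 0$ for a.e.\ $t$, and since $R_\tau$ is absolutely continuous, $R_\tau(t) = R_\tau(0)$ for all $t$. Thus it remains to show $R_\tau(0) = 0$ for all $\tau \in \X_h$, i.e.\ that the initial data satisfies \eqref{eq:monodd-mmmfe1}. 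This is exactly the statement that $x_{h,0} = (\sigma_{h,0}, u_{h,0}, \gamma_{h,0}, z_{h,0}, p_{h,0}, \lambda_{H,0}^u, \lambda_{H,0}^p)$, the elliptic projection defined by \eqref{eq:well-pose-initial-data}, satisfies the first component equation of the undifferentiated system; this follows from the construction in Lemma~\ref{lem:IC}, since $(\mathcal{N+M})x_{h,0} = (\mathcal{N+M})\tilde x_0$ and the first component of $(\mathcal{N+M})\tilde x_0$ encodes \eqref{eq:monodd-mmmfe1} at $t=0$ (the compatible continuous data $\tilde x_0$ satisfies \eqref{eq:cts1-1} and projections preserve the relevant pairings). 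Hence $R_\tau \equiv 0$ and \eqref{eq:monodd-mmmfe1} holds for all $t$.

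For uniqueness, suppose two solutions exist; their difference $(\sigma_h, u_h, \gamma_h, z_h, p_h, \lambda_H^u, \lambda_H^p)$ solves the homogeneous version of \eqref{eq:monodd-mmmfe1}--\eqref{eq:monodd-mmmfe7} with zero data and zero initial conditions for $\sigma_h$ and $p_h$. Differentiating \eqref{eq:monodd-mmmfe1} in time, testing with $(\sigma_h, \dot u_h, \dot\gamma_h, z_h, p_h, \dot\lambda_H^u, \lambda_H^p)$, and summing the equations (the same manipulation as in the proof of Lemma~\ref{lem:range}) yields a differential inequality of the form
\[
\frac12 \frac{d}{dt}\left( \|A^{1/2}(\sigma_h + \alpha p_h I)\|^2 + c_0 \|p_h\|^2 \right) + \|K^{-1/2} z_h\|^2 = 0.
\]
Integrating from $0$ to $t$ and using the zero initial data gives $\|A^{1/2}(\sigma_h + \alpha p_h I)\|^2 + c_0\|p_h\|^2 = 0$ and $\int_0^t \|K^{-1/2} z_h\|^2 = 0$, whence $p_h \equiv 0$, $\sigma_h \equiv 0$, and $z_h \equiv 0$ (by coercivity of $A$ and $K$). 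Then \eqref{eq:inf-sup-elast-weak} applied to \eqref{eq:monodd-mmmfe1} gives $u_h \equiv 0$ and $\gamma_h \equiv 0$, and the mortar inf-sup conditions \eqref{eq:mortar-u-inf-sup} and \eqref{eq:mortar-p-inf-sup} applied to \eqref{eq:monodd-mmmfe1} and \eqref{eq:monodd-mmmfe4} give $\lambda_H^u \equiv 0$ and $\lambda_H^p \equiv 0$. Finally, the attainment of initial conditions $u_h(0) = u_{h,0}$, $\gamma_h(0) = \gamma_{h,0}$, $\lambda_H^u(0) = \lambda_{H,0}^u$ is built into the definition via the integral representation, while $z_h(0) = z_{h,0}$ and $\lambda_H^p(0) = \lambda_{H,0}^p$ are inherited from Lemma~\ref{lem:well-posed}. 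I expect the main obstacle to be a careful bookkeeping argument that the elliptic projection data $x_{h,0}$ genuinely satisfies \eqref{eq:monodd-mmmfe1} at $t = 0$ — that is, unwinding which block row of \eqref{eq:well-pose-initial-data} corresponds to \eqref{eq:monodd-mmmfe1} and confirming that $\tilde x_0$ is compatible — rather than any analytic difficulty; the time-integration argument itself is routine once that is pinned down.
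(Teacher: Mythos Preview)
Your proposal is correct and follows essentially the same approach as the paper: take the solution of the differentiated system from Lemma~\ref{lem:well-posed}, define $u_h$, $\gamma_h$, $\lambda_H^u$ by integrating their dotted counterparts from the initial data, and recover \eqref{eq:monodd-mmmfe1} by integrating \eqref{eq:fe-mono-weak-diff} and using that the elliptic-projection initial data satisfies \eqref{eq:monodd-mmmfe1} at $t=0$. The only minor difference is that the paper defers uniqueness to the stability bound in Theorem~\ref{thm:stability-mono-mortar}, whereas you give the energy identity directly; your self-contained argument is equivalent and perfectly acceptable.
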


\begin{proof}
Let $(\sigma_{h},\dot u_{h},\dot\gamma_{h},z_{h},p_{h},\dot\lambda_{H}^{u},\lambda_{H}^{p})$ be a solution to the differentiated system \eqref{eq:fe-mono-weak-diff}, \eqref{eq:monodd-mmmfe2}--\eqref{eq:monodd-mmmfe7} obtained in Lemma~\ref{lem:well-posed}.
For each $t\in[0,T]$, define 
\begin{equation}\label{orig-var}
u_{h}(t)=u_{h,0}+\int_{0}^{t}\dot{u}_{h}(s)ds, \quad
\gamma_{h}(t)=\gamma_{h,0}+\int_{0}^{t}\dot{\gamma}_{h}(s)ds, \quad
\lHu(t)=\lambda_{H,0}^{u}+\int_{0}^{t}\dot{\lambda}_{H}^{u}(s)ds.
\end{equation}
Consider $x = (\sigma_{h},u_{h},\gamma_{h},z_{h},p_{h},\lambda_{H}^{u},\lambda_{H}^{p})$.
Since equations \eqref{eq:monodd-mmmfe2}--\eqref{eq:monodd-mmmfe7} do not involve $u_h$, $\gamma_h$ or $\lambda_{H}^{u}$, they still hold for $x$. It remains to show that \eqref{eq:monodd-mmmfe1} holds. This follows by integrating (\ref{eq:fe-mono-weak-diff}) with respect to time from $0$
to any $t\in(0,T]$ and using \eqref{orig-var} and the fact that $\s_{h,0},\,u_{h,0},\,\gamma_{h,0}$, and $\lambda_{H,0}^{u}$ are constructed to satisfy (\ref{eq:monodd-mmmfe1}). This completes the proof that the system \eqref{eq:monodd-mmmfe1}--\eqref{eq:monodd-mmmfe7} has a solution. Uniqueness of the solution follows from the stability bound presented in the next section. Finally, Lemma~\ref{lem:well-posed} gives that $\s_{h}(0)=\s_{h,0}$, $p_{h}(0)=p_{h,0}$, $z_{h}(0)=z_{h,0}$, and $\lambda_H^p(0) = \lambda_{H,0}^p$, while \eqref{orig-var} gives that $u_h(0) = u_{h,0}$, $\gamma_h(0) = \gamma_{h,0}$, and $\lambda_H^u(0) = \lambda_{H,0}^u$.
\end{proof}

\subsection{Stability analysis}

In this subsection we give a stability bound for the system
\eqref{eq:monodd-mmmfe1}--\eqref{eq:monodd-mmmfe7}. 

\begin{thm}
\label{thm:stability-mono-mortar} Under the assumption \eqref{eq:mortar_assumption}, there exists a constant $C>0$, independent of $c_{0}$ and the discretization parameters $h$ and $H$, such that for the solution of \eqref{eq:monodd-mmmfe1}--\eqref{eq:monodd-mmmfe7}, 
\begin{align}
& \|\sigma_{h}\|_{L^{\infty}\left(0,T;\X_h\right)}+\|u_{h}\|_{L^{\infty}\left(0,T;L^{2}\left(\Omega\right)\right)}+\|\gamma_{h}\|_{L^{\infty}\left(0,T;L^{2}\left(\Omega\right)\right)}+\|z_{h}\|_{L^{\infty}\left(0,T;L^{2}\left(\Omega\right)\right)}+\|p_{h}\|_{L^{\infty}\left(0,T;L^{2}\left(\Omega\right)\right)} \nonumber \\[1ex]
& \qquad +\|\lHu\|_{L^{\infty}\left(0,T;L^{2}\left(\G\right)\right)}+\|\lHp\|_{L^{\infty}\left(0,T;L^{2}\left(\G\right)\right)}+\|\sigma_{h}\|_{L^{2}\left(0,T;\X_{h}\right)}+\|u_{h}\|_{L^{2}\left(0,T;L^{2}\left(\Omega\right)\right)}+\|\gamma_{h}\|_{L^{2}\left(0,T;L^{2}\left(\Omega\right)\right)} \nonumber \\[1ex]
& \qquad +\|z_{h}\|_{L^{2}\left(0,T;Z_h\right)}+\|p_{h}\|_{L^{2}\left(0,T;L^{2}\left(\Omega\right)\right)}+\|\lHu\|_{L^{2}\left(0,T;L^{2}\left(\G\right)\right)}+\|\lHp\|_{L^{2}\left(0,T;L^{2}\left(\G\right)\right)} \nonumber \\[1ex]
&  \quad \le C\big(\|f\|_{H^{1}\left(0,T;L^{2}\left(\Omega\right)\right)}+\|g\|_{H^{1}\left(0,T;L^{2}\left(\Omega\right)\right)}+\|p_{0}\|_{H^{1}(\Omega)}+\|\nabla Kp_{0}\|_{H\left(\text{\emph{div}};\Omega\right)}\big). \label{eq:stability-mono-mortar}
\end{align}
\end{thm}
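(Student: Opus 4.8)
\emph{Setup and overview.} Write $s_h:=\s_h+\a p_h I$ for the combined stress--pressure variable and $\mathcal{D}$ for the sum of data norms on the right of \eqref{eq:stability-mono-mortar}. The plan has three stages: a basic energy inequality controlling $s_h$, $p_h$ and $z_h$; recovery of $u_h$, $\g_h$, $\lambda_H^u$, $\lambda_H^p$ and $\dvr_h\s_h$ from the discrete inf-sup conditions of Section~\ref{sec:interp} and from \eqref{eq:monodd-mmmfe2}; and an upgrade of the remaining $L^\infty$-in-time bounds using the time regularity supplied by the existence theory together with the mixed a priori bound for the Darcy block.

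\emph{Basic energy estimate.} Differentiate \eqref{eq:monodd-mmmfe1} in time as in \eqref{eq:fe-mono-weak-diff}. Testing the differentiated \eqref{eq:monodd-mmmfe1} with $\tau=\s_h$, \eqref{eq:monodd-mmmfe2} with $v=\dot u_h$, \eqref{eq:monodd-mmmfe4} with $\zeta=z_h$, and \eqref{eq:monodd-mmmfe5} with $w=p_h$, and using \eqref{eq:monodd-mmmfe3} to annihilate $(\dot\g_h,\s_h)$ and the weak normal continuity \eqref{eq:monodd-mmmfe6}--\eqref{eq:monodd-mmmfe7} to annihilate the interface pairings $\sum_i\gnp[\dot\lambda_H^u]{\s_h n_i}_{\G_i}$ and $\sum_i\gnp[\lambda_H^p]{z_h\cdot n_i}_{\G_i}$, the stress--displacement and pressure--velocity couplings cancel, leaving
\[
\tfrac{1}{2}\tfrac{d}{dt}\bigl(\|\Ahalf s_h\|^{2}+c_{0}\|p_h\|^{2}\bigr)+\|K^{-1/2}z_h\|^{2}=(f,\dot u_h)+(g,p_h).
\]
I would integrate over $(0,t)$ and integrate $(f,\dot u_h)$ by parts in time, producing $(f(t),u_h(t))-(f(0),u_{h,0})-\int_0^t(\dot f,u_h)$. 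Two devices, both pointwise in $t$, close the argument: testing \eqref{eq:monodd-mmmfe1} with $\tau\in\X_{h,0}$ (so the mortar term drops) and invoking \eqref{eq:inf-sup-elast-weak} gives $\|u_h\|+\|\g_h\|\le C\|s_h\|$; testing \eqref{eq:monodd-mmmfe4} with $\zeta\in Z_{h,0}$ and invoking \eqref{eq:inf-sup-darcy-weak} gives $\|p_h\|\le C\|z_h\|$. With these and Young's inequality, $(f(t),u_h(t))$ is absorbed into a small multiple of $\|\Ahalf s_h(t)\|^2$ on the left, $\int_0^t(g,p_h)$ is absorbed into $\int_0^t\|K^{-1/2}z_h\|^2$ on the left (the Young parameter being fixed by the inf-sup and coercivity constants, hence independent of $h,H,c_0$), and the remainder becomes a multiple of $\mathcal{D}^2$ plus $C\int_0^t\|\Ahalf s_h\|^2$; the initial quantities $\|\Ahalf s_h(0)\|^2+c_0\|p_h(0)\|^2$ and $\|u_{h,0}\|$ are bounded by $C\mathcal{D}^2$ through the stability of the elliptic projections defining the discrete initial data (Lemma~\ref{lem:IC}). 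Gronwall's lemma then yields the $L^\infty(0,T;L^2)$ bound on $s_h$, the $L^\infty(0,T;L^2)$ bound on $\sqrt{c_0}\,p_h$, and the $L^2(0,T;L^2)$ bound on $z_h$, all with constant independent of $c_0$, $h$, $H$.

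\emph{Recovering the remaining norms.} Equation \eqref{eq:monodd-mmmfe2} gives $\|\dvr_h\s_h\|\le\|f\|$ in both $L^\infty$ and $L^2$ in time. The pointwise bounds $\|u_h\|+\|\g_h\|\le C\|s_h\|$ and $\|p_h\|\le C\|z_h\|$ give the $L^2(0,T;L^2)$ bounds on $u_h,\g_h,p_h$; the mortar inf-sup conditions \eqref{eq:mortar-u-inf-sup} and \eqref{eq:mortar-p-inf-sup} applied to \eqref{eq:monodd-mmmfe1} and \eqref{eq:monodd-mmmfe4} give $\|\lambda_H^u\|_\G\le C(\|s_h\|+\|u_h\|+\|\g_h\|)$ and $\|\lambda_H^p\|_\G\le C(\|z_h\|+\|p_h\|)$. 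For the $L^\infty(0,T;L^2)$ bounds on $p_h$ and $z_h$ (hence on $\lambda_H^p$ and, via $\s_h=s_h-\a p_h I$, on $\s_h$) and the $L^2(0,T;L^2)$ bound on $\dvr_h z_h$, I would use that the solution of the differentiated system \eqref{eq:eq:mod_matrix_form} satisfies $\mathcal{N}\dot x\in W^{1,\infty}(0,T;E_b')$ by Theorem~\ref{thm:algebraic-pde}: since the nonzero components of $\mathcal{N}\dot x$ are $A s_h$ and $c_0p_h+\a\tr(As_h)$, a data-quantified form of this regularity (equivalently, a second, differentiated-in-time energy estimate) gives $\|\dot s_h\|_{L^\infty(0,T;L^2)}+\|c_0\dot p_h\|_{L^\infty(0,T;L^2)}\le C\mathcal{D}$. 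Restricting \eqref{eq:monodd-mmmfe4}--\eqref{eq:monodd-mmmfe5} to $Z_{h,0}\times W_h$ (the interface term in \eqref{eq:monodd-mmmfe4} drops) yields a standard mixed Darcy problem with datum $g-c_0\dot p_h-\a\tr(A\dot s_h)$, whose a priori bound — a consequence of the coercivity of $K^{-1}$ and the inf-sup \eqref{eq:inf-sup-darcy-weak} — reads $\|z_h\|+\|\dvr_h z_h\|+\|p_h\|\le C\bigl(\|g\|+c_0\|\dot p_h\|+\|\dot s_h\|\bigr)$ pointwise in $t$, supplying the missing norms. Uniqueness of the solution then follows by applying the resulting bound to the difference of two solutions.

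\emph{Main obstacle.} The crux is the term $(f,\dot u_h)$: the energy identity does not control the time derivative of the displacement, so one must integrate by parts in time and appeal to $\|u_h\|\le C\|s_h\|$, and bound the boundary value $u_{h,0}$ from the initial-data construction. A related difficulty is keeping every constant independent of $c_0$, which forces the $L^2$- and $L^\infty$-in-time control of $p_h$ to pass through the Darcy inf-sup and the Darcy mixed a priori estimate rather than through the degenerate energy term $c_0\|p_h\|^2$; and, since only $f,g\in W^{1,\infty}(0,T;L^2)$ is available, the upgrade to $L^\infty$-in-time bounds for the flow variables must draw on the time regularity provided by the degenerate-parabolic existence theory (a naive differentiated energy estimate would need $\ddot f$).
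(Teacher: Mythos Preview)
Your first two stages --- the basic energy identity, integration of $(f,\dot u_h)$ by parts in time, and the recovery of $u_h,\g_h,p_h,\dvr_h\s_h,\lambda_H^u,\lambda_H^p$ via the inf-sup conditions \eqref{eq:inf-sup-elast-weak}, \eqref{eq:inf-sup-darcy-weak}, \eqref{eq:mortar-u-inf-sup}, \eqref{eq:mortar-p-inf-sup} --- are correct and match the paper's argument (your use of Gronwall is a harmless variant of the paper's direct chaining $\|u_h\|\le C\|s_h\|\le C(\|p_h\|+\|f\|)\le C(\|z_h\|+\|f\|)$, which allows absorbing $\epsilon\int_0^t\|u_h\|^2$ into $\int_0^t\|K^{-1/2}z_h\|^2$ without a Gronwall term).

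The gap is in your third stage. You need $\|z_h\|_{L^\infty(L^2)}$, $\|p_h\|_{L^\infty(L^2)}$ (and hence $\|\s_h\|_{L^\infty(L^2)}$, $\|\lambda_H^p\|_{L^\infty(L^2)}$), and $\|\dvr_h z_h\|_{L^2(L^2)}$, and you propose to obtain these from the pointwise mixed Darcy bound once you have $\|\dot s_h\|_{L^\infty(L^2)}+\|c_0\dot p_h\|_{L^\infty(L^2)}\le C\mathcal{D}$. But you do not actually establish this $L^\infty$-in-time control of the time derivatives: Theorem~\ref{thm:algebraic-pde} only asserts $\mathcal{N}\dot x\in W^{1,\infty}(0,T;E_b')$ qualitatively, with no data-dependent constant, and (as you yourself note) the ``naive'' differentiated energy estimate --- repeating Stage~1 on the time-differentiated system --- would require $\ddot f$, which is not in $\mathcal{D}$. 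So neither of your two routes to that bound is available.

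The paper closes this gap with a specific choice of test functions that you did not hit on: differentiate \eqref{eq:fe-mono-weak-1}--\eqref{eq:fe-mono-weak-4} in time but leave \eqref{eq:fe-mono-weak-5} undifferentiated, and test with $(\tau,v,\xi,\zeta,w)=(\dt\s_h,\dt u_h,\dt\g_h,\,z_h,\,\dt p_h)$. The key is the pairing of the \emph{differentiated} Darcy law with $\zeta=z_h$ (not $\dt z_h$) together with the \emph{undifferentiated} mass balance with $w=\dt p_h$; the $(\dt p_h,\dvr_h z_h)$ terms cancel and one obtains
\[
\|\dt A^{1/2}s_h\|^2+c_0\|\dt p_h\|^2+\tfrac12\dt\|K^{-1/2}z_h\|^2=(\dt f,\dt u_h)+(g,\dt p_h).
\]
Now $(g,\dt p_h)$ is integrated by parts in time (needing only $\dt g$), and $(\dt f,\dt u_h)$ is absorbed via $\|\dt u_h\|\le C\|\dt s_h\|$ from the inf-sup applied to the differentiated \eqref{eq:fe-mono-weak-1}. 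Integrating in time yields \emph{simultaneously} the $L^2(0,T)$ bounds on $\dt s_h$ and $\sqrt{c_0}\,\dt p_h$ (which feed into $\|\dvr_h z_h\|_{L^2(L^2)}$ exactly as you proposed, by testing \eqref{eq:fe-mono-weak-5} with $w=\dvr_h z_h$) and, from the total derivative term, the pointwise bound $\|z_h(t)\|$ --- hence $\|p_h(t)\|$ via the Darcy inf-sup --- all with $f,g\in H^1(0,T;L^2)$ only. Once you have this second energy identity, the rest of your Stage~3 (and the uniqueness remark) goes through.
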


\begin{proof}
It is convenient to use the weakly continuous normal stress and velocity formulation 
\eqref{eq:fe-mono-weak-1}--\eqref{eq:fe-mono-weak-5}. We differentiate \eqref{eq:fe-mono-weak-1} in time, combine it with \eqref{eq:fe-mono-weak-2}--\eqref{eq:fe-mono-weak-5}, and take test functions $(\tau,v,\xi,\zeta,w) = (\sigma_{h},\dt u_{h},\dt\gamma_{h},z_{h},p_{h})$ to get
\begin{align*}
& \inp[\dt A\left(\sigma_{h}+\a p_{h}I\right)]{\sigma_{h}+\a p_{h}I}+c_{0}\inp[\dt p_{h}]{p_{h}}+\inp[\K z_{h}]{z_{h}}=\left(f,\dt u_{h}\right)+\left(q,p_{h}\right).
\end{align*}
The above equation can be rewritten as
\begin{equation}
\frac{1}{2}\dt\left(\|A^{\frac{1}{2}}\left(\sigma_{h}+\a p_{h}I\right)\|^{2}+c_{0}\|p_{h}\|^{2}\right)+\|K^{-\frac{1}{2}}z_{h}\|^{2}=\dt\left(f,u_{h}\right)-\left(\dt f,u_{h}\right)+\left(g,p_{h}\right).\label{eq:stab-bnd-1}
\end{equation}
For any $t\in(0,T]$, we integrate equation (\ref{eq:stab-bnd-1}) with respect to time from $0$ to $t$ to get
\begin{align*}
& \frac{1}{2}\left(\|A^{\frac{1}{2}}\left(\sigma_{h}+\a p_{h}I\right)(t)\|^{2}+c_{0}\|p_{h}(t)\|^{2}\right)+\int_{0}^{t}\|K^{-\frac{1}{2}}z_{h}\|^{2}ds\\
& =\frac{1}{2}\left(\|A^{\frac{1}{2}}\left(\sigma_{h}+\a p_{h}I\right)(0)\|^{2}+c_{0}\|p_{h}(0)\|^{2}\right)+\int_{0}^{t}\left(\left(g,p_{h}\right)-\left(\dt f,u_{h}\right)\right)ds+\left(f,u_{h}\right)(t)-\left(f,u_{h}\right)(0).
\end{align*}
Applying the Cauchy-Schwartz and Young's inequalities, we get, for any $\epsilon > 0$,
\begin{align}
& \|A^{\frac{1}{2}}\left(\sigma_{h}+\a p_{h}I\right)(t)\|^{2}+c_{0}\|p_{h}(t)\|^{2}+2\int_{0}^{t}\|K^{-\frac{1}{2}}z_{h}\|^{2}ds\nonumber \\
	& \quad \le\|A^{\frac{1}{2}}\left(\sigma_{h}+\a p_{h}I\right)(0)\|^{2}+c_{0}\|p_{h}(0)\|^{2}+\epsilon\left(\int_{0}^{t}\left(\|p_{h}\|^{2}+\|u_{h}\|^{2}\right)ds+\|u_{h}(t)\|^{2}\right)\label{eq:stab-bnd-2}\\
	& \qquad +\frac{1}{\epsilon}\left(\int_{0}^{t}\left(\|g\|^{2}+\|\dt f\|^{2}\right)ds+\|f(t)\|^{2}\right)+\|f(0)\|^{2}+\|u_{h}(0)\|^{2}.\nonumber 
	\end{align}
A bound for $\|u_{h}\|$ and $\|\gamma_{h}\|$ follows from the inf-sup
condition (\ref{eq:inf-sup-elast-weak}) and \eqref{eq:fe-mono-weak-1}:
\begin{align}
& \|u_{h}\|+\|\gamma_{h}\| \le C_{E}\sup_{0\ne\tau\in\mathbb{X}_{h,0}}\frac{\inp[u_{h}]{\dvr_h{\tau}} + \inp[\gamma_{h}]{\tau}}{\|\tau\|_{\X_{h}}}
= C_{E}\sup_{0\ne\tau\in\mathbb{X}_{h,0}}\frac{\inp[A(\sigma_{h} + \a p_{h}I)]{\tau}}{\|\tau\|_{\X_{h}}}\le C\|\sigma_{h}+\a p_{h}I\|.\label{eq:stab-bnd-3}
\end{align}
Next, choose test functions $(\tau,v,\xi)=(\sigma_{h},u_{h},\gamma_{h})$
in \eqref{eq:fe-mono-weak-1}--\eqref{eq:fe-mono-weak-3}, 
combine the equations and use the Cauchy-Schwartz and Young's inequalities to get 
\[
\|\sigma_{h}\|^{2}\le C\left(\|p_{h}^{2}\|+\epsilon\|u_{h}\|^{2} + \frac{1}{\epsilon}\|f\|^{2}\right).
\]
Combining the above inequality with \eqref{eq:stab-bnd-3} and taking $\epsilon$ small enough
yields 
\begin{equation}
\int_{0}^{t}\left(\|u_{h}\|^{2}+\|\gamma_{h}\|^{2}\right)ds\le C\int_{0}^{t}\left(\|p_{h}^{2}\| + \|f\|^{2}\right)ds.
\label{eq:stab-bnd-4-1}
\end{equation}
Bound for $\|p_{h}\|$ can be obtained from the inf-sup condition
(\ref{eq:inf-sup-darcy-weak}) and equation \eqref{eq:fe-mono-weak-4} as follows:
\begin{align}
||p_{h}|| & \le C_{D}\sup_{0\ne\zeta_{h}\in Z_{h,0}}\frac{\sum_{i=1}^{N}\left(\dvr\zeta_{h},p_{h}\right)_{\Omega_{i}}}{||\zeta_{h}||_{Z_{h}}} = C_{D}\sup_{0\ne\zeta_{h}\in Z_{h,0}}\frac{\inp[\K z_{h}]{\zeta_h}}{||\zeta_{h}||_{Z_{h}}}\le C \|z_h\|. \label{eq:inf-stab-bnd-5}
\end{align}
Further, taking test function $v = \text{div}_h \sigma_{h}$ in \eqref{eq:fe-mono-weak-2} yields
\begin{equation}
\|\text{div}_h\,\sigma_{h}\|^2\le\|f\|^2.\label{eq:stab-bnd-6}
\end{equation}
Combining inequalities \eqref{eq:stab-bnd-2}--\eqref{eq:stab-bnd-6} and taking
$\epsilon$ small enough, we obtain
\begin{align}
 & \|\left(\sigma_{h} + \a p_{h}I\right)(t)\|^2 +
\|\text{div}_h\sigma_{h}(t)\|^2 + \|u_{h}(t)\|^{2}+\|\gamma_{h}(t)\|^{2}+c_{0}\|p_{h}(t)\|^{2}\nonumber \\
& \qquad + \int_{0}^{t}\left(\|\sigma_{h}\|^2 + \|\text{div}_h\sigma_{h}\|^2 + \|u_{h}\|^{2}+\|\gamma_{h}\|^{2} + \|z_{h}\|^{2}+\|p_{h}\|^{2}\right)ds\nonumber \\
& \quad \le C\bigg(\int_{0}^{t}\left(\|g\|^{2} + \|\dt\|^{2} + \|f\|^{2}\right) ds + \|f(t)\|^{2}+\|\sigma_{h}(0)\|^{2} + \|u_{h}(0)\|^{2}+\|p_{h}(0)\|^{2}+\|f(0)\|^{2}\bigg).
\label{eq:stab-bnd-6-0-1}
\end{align}

\medskip
\noindent
{\bf Bound on $\|\dvr_h z_{h}\|$.}

We continue with deriving a bound for $\displaystyle \int_0^t\|\text{div}_h \, z_{h}\|^2 ds$. In the process we also obtain bounds on $\|z_{h}(t)\|$ and $\|p_{h}(t)\|$ for all $t\in(0,t]$, which are independent of $c_{0}$. We start by choosing test function $w=\text{div}_hz_{h}$ in \eqref{eq:fe-mono-weak-5} to obtain
\begin{equation}
  \|\text{div}_h \, z_{h}\| \le C\left(c_{0}\|\dt p_{h}\|
  + \|\dt \left(\sigma_{h} + \a p_{h}I\right)\| + \|q\|\right).\label{eq:stab-bond-6-1}
\end{equation}
To bound the time-derivative terms on the right hand side of (\ref{eq:stab-bond-6-1}), differentiate
equations \eqref{eq:fe-mono-weak-1}--\eqref{eq:fe-mono-weak-4} with respect to time and take 
$(\tau,v,\xi,\zeta,w)=(\dt\sigma_{h},\dt u_{h},\dt\gamma_{h},z_{h},\dt p_{h})$ in the differentiated equations and equation \eqref{eq:fe-mono-weak-5}. Combining the resulting equations and integrating in time from $0$ to $t\in(0,T]$, we obtain, similarly to equations (\ref{eq:stab-bnd-1})$\--$(\ref{eq:stab-bnd-2}),
\begin{align}
& 2\int_{0}^{t}\left(\|\dt A^{\frac{1}{2}}\left(\sigma_{h}+\a p_{h}I\right)\|^{2}+c_{0}\dt\|p_{h}\|^{2}\right)ds+\|K^{-\frac{1}{2}}z_{h}(t)\|^{2}\nonumber \\
& \qquad \le\epsilon\left(\int_{0}^{t}\|\dt u_{h}\|^{2}ds+\|p_{h}(t)\|^{2}\right) + \frac{1}{\epsilon}\left(\int_{0}^{t}\|\dt f\|^{2}ds + \|g(t)\|^{2}\right)\nonumber \\
& \qquad \quad +\int_{0}^{t}\left(\|p_{h}\|^{2}+\|\dt g\|^{2}\right)ds+\|K^{-\frac{1}{2}}z_{h}(0)\|^{2}+\|p_{h}(0)\|^{2}+\|g(0)\|^{2}.\label{eq:stab-bnd-7}
\end{align}
To bound $\|\dt u_{h}\|$, we use the inf-sup condition (\ref{eq:inf-sup-elast-weak}) and the time-differentiated \eqref{eq:fe-mono-weak-1} to obtain
\begin{equation}
\|\dt u_{h}\|+\|\dt\gamma_{h}\|\le C\|\dt \left(\sigma_{h}+\a p_{h}I\right)\|.\label{eq:stab-bnd-8}
\end{equation}
Combining inequalities (\ref{eq:inf-stab-bnd-5}), (\ref{eq:stab-bnd-7}),
and (\ref{eq:stab-bnd-8}) and taking $\epsilon$ small enough gives 
\begin{align}
& \int_{0}^{t}\left(\|\dt \left(\sigma_{h}+\a p_{h}I\right)\|^{2}+c_{0}\|\dt p_{h}\|^{2} + \|\dt u_{h}\|^{2}+\|\dt\gamma_{h}\|^{2}\right)ds + \|z_{h}(t)\|^{2}+\|p_{h}(t)\|^{2}\nonumber \\
	& \quad\le C\left(\int_{0}^{t}\left(\|\dt f\|^{2}+\|p_{h}\|^{2}+\|\dt g\|^{2}\right)ds+\|g(t)\|^{2}+\|p_{h}(0)\|^{2}+\|g(0)\|^{2}+\|z_{h}(0)\|^{2}\right).\label{eq:stab-bnd-9}
\end{align}
Integrating (\ref{eq:stab-bond-6-1}) with respect to time
from $0$ to $t\in(0,T]$ and combining it with (\ref{eq:stab-bnd-9}) and \eqref{eq:stab-bnd-6-0-1} gives
  \begin{align}
    & \|p_{h}(t)\|^{2}+\|z_{h}(t)\|^{2} +
    \int_{0}^{t}\|\text{div}_h\,z_{h}\|^{2}ds \le C \bigg(\int_{0}^{t}\big(\|f\|^{2} +
    \|\dt f\|^{2} + \|g\|^{2} + \|\dt g\|^{2}\big)ds  \nonumber \\
    & \qquad 
    + \|f(t)\|^{2}+\|g(t)\|^{2} +\|\sigma_{h}(0)\|^{2} + \|u_h(0)\|^2 +\|p_{h}(0)\|^{2} + \|z_{h}(0)\|^{2} +\|f(0)\|^{2}+\|g(0)\|^{2} \bigg). \label{eq:stab-bnd-10}
\end{align}
We further note that \eqref{eq:stab-bnd-6-0-1} and \eqref{eq:stab-bnd-10} provide bounds on
$\|\left(\sigma_{h}+\a p_{h}I\right)(t)\|$ and $\|p_{h}(t)\|$, which also gives a bound on $\|\sigma_{h}(t)\|$, using
\begin{align}
\|\sigma_{h}\|\le C \left(\|\left(\sigma_{h}+\a p_{h}I\right)(t)\| +\|p_{h}(t)\|\right).\label{eq:stab-bnd-12}
\end{align}

\medskip
\noindent
{\bf Bounds on } $\|\lHu\|$ and $\|\lHp\|$.
    
It remains to bound the Lagrange multipliers $\|\lHu\|$ and $\|\lHp\|$. For this purpose we utilize equations \eqref{eq:monodd-mmmfe1} and \eqref{eq:monodd-mmmfe4}. Combining the inf-sup bound (\ref{eq:mortar-u-inf-sup}) and (\ref{eq:monodd-mmmfe1}) gives 
\begin{align}
\|\lHu\|_{\G} & \le \beta_E \sup_{0\ne\tau\in\X_{h}}\frac{\sum_{i=1}^{N}\gnp[\tau n_{i}]{\lHu}_{\G_{i}}}{\|\tau\|_{\X_{h}}}
= \beta_E \sup_{0\ne\tau\in\X_{h}}\frac{1}{\|\tau\|_{\X_{h}}}\left(\inp[A\left(\sigma_{h}+\a p_{h}I\right)]{\tau} + \inp[u_{h}]{\dvr_h\,{\tau}} + \inp[\gamma_{h}]{\tau}\right) \nonumber \\
& \le C\left(\|\sigma_{h}\| + \|p_{h}\| + \|u_{h}\| + \|\g_{h}\|\right).
\label{lhu-bound}
\end{align}
Similarly, we bound $\|\lHp\|$ by combining the inf-sup bound (\ref{eq:mortar-p-inf-sup})
and (\ref{eq:monodd-mmmfe4}):
\begin{align}
\|\lHp\|_{\G} & \le \beta_D\sup_{0\ne\zeta\in Z_{h}}\frac{\sum_{i=1}^{N}\gnp[\zeta\cdot n_{i}]{\lHp}_{\G_{i}}}{\|\zeta\|_{Z_{h}}}
= \beta_D \sup_{0\ne\zeta\in Z_{h}}\frac{1}{\|\zeta\|_{Z_{h}}}\left(-\inp[\K z_{h}]{\zeta} + \inp[p_{h}]{\dvr_h \,{\zeta}}\right) \nonumber \\
& \le C\left(\|z_{h}\|+\|p_{h}\|\right).
\label{lhp-bound}
\end{align}

\medskip
\noindent
{\bf Bound on the initial data.}
	
In order to bound the initial data $\sigma_{h}(0),\,u_{h}(0),\,z_{h}(0)$,
and $p_{h}(0)$, recall that the discrete initial data is obtained by
taking elliptic projection of the continuous initial data,
cf. (\ref{eq:well-pose-initial-data}). Further note that the continuous
initial data is constructed from the original pressure initial data
$p_{0}$ using the procedure described in the proof of Lemma~\ref{lem:IC}.
Employing the steady-state version of the arguments used in the proof above for the weakly continuous normal stress and velocity formulation of \eqref{eq:well-pose-initial-data}
gives 
\begin{align}
& \|\sigma_{h}(0)\|+\|u_{h}(0)\|+\|\gamma_{h}(0)\|+\|z_{h}(0)\|+\|p_{h}(0)\|\le C\left(\|\s_{0}\|+\|u_{0}\|+\|\g_{0}\|+\|z_{0}\|+\|p_{0}\|\right)\nonumber \\
& \quad\quad\quad\le C\left(\|p_{0}\|_{H^{1}(\Omega)}+\|K\nabla p_{0}\|_{H(\text{div};\Omega)}\right).\label{eq:stab-bnd-13}
\end{align}
	
Finally, bound \eqref{eq:stability-mono-mortar} follows by combining inequalities (\ref{eq:stab-bnd-6-0-1}) and  (\ref{eq:stab-bnd-10})$\--$(\ref{eq:stab-bnd-13}).
\end{proof}

\section{Error analysis}
\label{sec:error}

In this section we establish a combined a priori error estimate
for all unknowns in the method.

\begin{thm}
\label{thm:Error-theorem}Let $\ensuremath{(\sigma_{h}(t),u_{h}(t),\gamma_{h}(t),z_{h}(t),p_{h}(t),\lHu(t),\lHp(t))}\in\X_{h}\times V_{h}\times\mathbb{Q}_{h}\times Z_{h}\times W_{h}\times\Lambda_{H}^u\times\Lambda_H^p$
be the solution to the system of equations \eqref{eq:monodd-mmmfe1}$\--$\eqref{eq:monodd-mmmfe7}
under the assumption \eqref{eq:mortar_assumption} for $t\in[0,T]$,
and suppose that the solution of \eqref{eq:cts1-1}$\--$\eqref{eq:cts5-1}
is sufficiently smooth. Then there exists a positive constant $C$,
independent of $h$, $H$, and $c_{0}$ such that
\begin{align*}
& \|\s-\s_{h}\|_{L^{\infty}\left(0,T;\X_h\right)}+\|u-u_{h}\|_{L^{\infty}\left(0,T;L^{2}\left(\Omg\right)\right)}+\|\g-\g_{h}\|_{L^{\infty}\left(0,T;L^{2}\left(\Omg\right)\right)}+\|z-z_{h}\|_{L^{\infty}\left(0,T;L^{2}\left(\Omg\right)\right)}\\
&\qquad +\|p-p_{h}\|_{L^{\infty}\left(0,T;L^{2}\left(\Omg\right)\right)}+\|u-\lHu\|_{L^{\infty}\left(0,T;L^{2}\left(\G\right)\right)}+\|p-\lHp\|_{L^{\infty}\left(0,T;L^{2}\left(\G\right)\right)}+\|\s-\s_{h}\|_{L^{2}\left(0,T;\X_h\right)}\\
&\qquad +\|u-u_{h}\|_{L^{2}\left(0,T;L^{2}\left(\Omg\right)\right)}+\|\g-\g_{h}\|_{L^{2}\left(0,T;L^{2}\left(\Omg\right)\right)}+\|z-z_{h}\|_{L^{2}\left(0,T;Z_h\right)}+\|p-p_{h}\|_{L^{2}\left(0,T;L^{2}\left(\Omg\right)\right)}\\
&\qquad +\|u-\lHu\|_{L^{2}\left(0,T;L^{2}\left(\G\right)\right)}+\|p-\lHp\|_{L^{2}\left(0,T;L^{2}\left(\G\right)\right)}\\
& \quad
  \le C\Big(h^{k_{1}}\|\s\|_{H^{1}\left(0,T;H^{k_{1}}\left(\Omg\right)\right)}	+h^{k_{2}}H^{\half}\|\s\|_{H^{1}\left(0,T;H^{k_{2}+\half}\left(\Omg\right)\right)} +h^{l_{1}}\|\dvr\s\|_{L^{\infty}\left(0,T;H^{l_{1}}\left(\Omg\right)\right)}
  \\
  & \qquad
  +h^{l_{2}}\|\dvr\s\|_{L^{2}\left(0,T;H^{l_{2}}\left(\Omg\right)\right)}  +h^{l_{3}}\|u\|_{L^{2}\left(0,T;H^{l_{3}}\left(\Omg\right)\right)}+h^{l_{4}}\|u\|_{L^{\infty}\left(0,T;H^{l_{4}}\left(\Omg\right)\right)}+h^{j_{1}}\|\g\|_{H^{1}\left(0,T;H^{j_{1}}\left(\Omg\right)\right)}\\
& \qquad +h^{r_{1}}\|z\|_{H^{1}\left(0,T;H^{r_{1}}\left(\Omg\right)\right)}+h^{r_{2}}H^{\half}\|z\|_{H^{1}\left(0,T;H^{r_{2}+\half}\left(\Omg\right)\right)}+h^{s_{1}}\|\dvr z\|_{L^{2}\left(0,T;H^{s_{1}}\left(\Omg\right)\right)}\\
& \qquad +h^{s_{2}}\|p\|_{H^{1}\left(0,T;H^{s_{2}}\left(\Omg\right)\right)}+H^{m_{1}-\half}\|u\|_{H^{2}\left(0,T;H^{m_{1}+\half}\left(\Omg\right)\right)}+H^{m_{2}-\half}\|p\|_{H^{1}\left(0,T;H^{m_{2}+\half}\left(\Omg\right)\right)}\Big),\\
& \quad\quad\quad\qquad 0 < k_{1}, k_2 \le k+1, \quad\,0\le l_{1},l_{2},l_{3},l_{4}\le l+1,\quad\,0\le j_{1}\le j+1,\\
& \quad\quad\quad\quad\quad 0 < r_{1},r_{2}\le r+1,\,\quad0\le s_{1},s_{2}\le s+1,\,\quad0\le m_{1},m_{2}\le m+1.
\end{align*}
\end{thm}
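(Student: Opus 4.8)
The plan is to follow the standard energy approach for mixed finite element error estimates, closely paralleling the stability proof of Theorem~\ref{thm:stability-mono-mortar}, but now driven by projection and interface consistency errors instead of the source data. First I would split each error into an approximation part and a discrete part: $\s-\s_{h}=(\s-\Pi_0^\sigma\s)+(\Pi_0^\sigma\s-\s_{h})=:\psi_\sigma+\phi_\sigma$ with $\phi_\sigma\in\X_{h,0}$, and likewise $z-z_{h}=:\psi_z+\phi_z$ using $\Pi_0^z$ with $\phi_z\in Z_{h,0}$ (Lemmas~\ref{lem:operator_elast}--\ref{lem:operator_darcy}); $u-u_{h}=:\psi_u+\phi_u$, $\g-\g_{h}=:\psi_\gamma+\phi_\gamma$, $p-p_{h}=:\psi_p+\phi_p$ using the $L^2$-projections ${\cal P}_h^u$, ${\cal R}_h$, ${\cal P}_h^p$; and the mortar errors $u|_\G-\lHu=:\psi_{\lambda^u}+\phi_{\lambda^u}$, $p|_\G-\lHp=:\psi_{\lambda^p}+\phi_{\lambda^p}$ using the Scott--Zhang interpolants ${\cal I}_{H}^{u}$, ${\cal I}_{H}^{p}$. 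This choice is dictated by two needs: that $\phi_\sigma,\phi_z$ lie in the weakly continuous spaces so that the inf-sup conditions \eqref{eq:inf-sup-elast-weak}--\eqref{eq:inf-sup-darcy-weak} are applicable, and that the orthogonalities $(\dvr_h\psi_\sigma,v)_{\Oi}=0$, $(\psi_\sigma,\xi)=0$, $(\psi_u,v)=0$, $(\psi_\gamma,\xi)=0$, $(\dvr_h\psi_z,w)_{\Oi}=0$, $(\psi_p,w)=0$ hold (from \eqref{eq:pi-weak-1}--\eqref{eq:pi-weak-2}, \eqref{eq:pi-weak-d-1}, \eqref{eq:disp-project-1}, \eqref{eq:disp-project-1-1}, \eqref{eq:pressure-project}). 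The error equations are obtained by subtracting \eqref{eq:monodd-mmmfe1}--\eqref{eq:monodd-mmmfe7} from \eqref{eq:cts1-1}--\eqref{eq:cts5-1} tested subdomain-by-subdomain and integrated by parts, using that the exact $\s$ has continuous normal components and the exact $u,p$ single-valued traces across $\G$; after inserting the decomposition, all $\psi$-terms cancel except $\inp[A(\psi_\sigma+\a\psi_p I)]{\tau}$, $\inp[\K\psi_z]{\zeta}$, and the nonconformity terms $\sum_i\gnp[u-{\cal I}_{H}^{u}u]{\tau n_i}_{\G_i}$, $\sum_i\gnp[p-{\cal I}_{H}^{p}p]{\zeta\cdot n_i}_{\G_i}$ (the $\phi_{\lambda^\star}$ parts of the mortar errors vanish against $\tau\in\X_{h,0}$, resp.\ $\zeta\in Z_{h,0}$, and the constraint equations \eqref{eq:monodd-mmmfe6}--\eqref{eq:monodd-mmmfe7} become vacuous since $\Pi_0^\sigma\s$, $\Pi_0^z z$ are already weakly continuous).

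Next I would run the energy argument exactly as in \eqref{eq:stab-bnd-1}--\eqref{eq:stab-bnd-12}: differentiate the first error equation in time and test with $\phi_\sigma$; test the time-differentiated second and third error equations with $\partial_t\phi_u$, $\partial_t\phi_\gamma$; test the fourth with $\phi_z$ and the fifth with $\phi_p$; add, integrate in time, and apply Cauchy--Schwarz and Young. This controls $\|(\phi_\sigma+\a\phi_p I)(t)\|^2+c_0\|\phi_p(t)\|^2+\int_0^t\|\phi_z\|^2$, with the $\X_{h,0}$ inf-sup \eqref{eq:inf-sup-elast-weak} supplying $\|\phi_u\|+\|\phi_\gamma\|\le C\|\phi_\sigma+\a\phi_p I\|+(\text{approx})$, the $Z_{h,0}$ inf-sup \eqref{eq:inf-sup-darcy-weak} supplying $\|\phi_p\|\le C\|\phi_z\|+(\text{approx})$, and \eqref{eq:monodd-mmmfe2} together with \eqref{eq:pi-weak-div} controlling $\|\dvr_h\phi_\sigma\|$. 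To bound $\int_0^t\|\dvr_h\phi_z\|^2$ — needed for the $\X_h$ and $Z_h$ norms in the statement — I would differentiate the first four error equations once more in time and repeat the argument as in \eqref{eq:stab-bnd-7}--\eqref{eq:stab-bnd-10}; this second round is the origin of the $H^{2}(0,T;\cdot)$ norm on $u$ and the $H^{1}(0,T;\cdot)$ norms on the remaining variables. A Gronwall argument (absorbing the $\int_0^t(\|\phi_p\|^2+\|\phi_u\|^2)$ contributions from the Young estimates) then closes the estimate for $\phi_\sigma,\phi_u,\phi_\gamma,\phi_z,\phi_p$ in terms of the approximation and nonconformity quantities.

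For the mortar errors I would use the \emph{full} (not weakly continuous) first and fourth error equations together with the mortar inf-sup conditions \eqref{eq:mortar-u-inf-sup}--\eqref{eq:mortar-p-inf-sup}, exactly as in \eqref{lhu-bound}--\eqref{lhp-bound}, to get $\|\phi_{\lambda^u}\|_\G\le C(\|\phi_\sigma\|+\|\dvr_h\phi_\sigma\|+\|\phi_p\|+\|\phi_u\|+\|\phi_\gamma\|)+(\text{approx})$ and $\|\phi_{\lambda^p}\|_\G\le C(\|\phi_z\|+\|\dvr_h\phi_z\|+\|\phi_p\|)+(\text{approx})$. Finally, the triangle inequality combined with the approximation bounds \eqref{eq:inter-proj-3}--\eqref{eq:inter-proj-11}, \eqref{eq:pi-weak-5}, \eqref{eq:pi-weak-div}, \eqref{eq:pi-weak-d-4}, \eqref{eq:pi-weak-d-div}, and \eqref{eq:inter-proj-1}--\eqref{eq:inter-proj-1a} (with $t=0$ for $\psi_{\lambda^u},\psi_{\lambda^p}$) delivers the stated rates: the $h^{k_1}$, $h^{j_1}$, $h^{r_1}$, $h^{s_2}$, etc.\ terms from the $L^2$-type approximation bounds, the $h^{k_2}H^{\frac12}$ and $h^{r_2}H^{\frac12}$ terms from the special-projection bounds \eqref{eq:pi-weak-5} and \eqref{eq:pi-weak-d-4}, and the $H^{m_1-\frac12}$ and $H^{m_2-\frac12}$ terms from the nonconformity contributions (bounded via the $H^{\frac12}(\dOi)$ duality \eqref{eq:trace-2}, the Scott--Zhang bound \eqref{eq:inter-proj-1} with $t=\frac12$, and the trace inequality \eqref{eq:trace-1} to pass from $\G$-norms to $\Omega$-norms of $u$ and $p$).

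The main obstacle is the nonconformity: handling the interface consistency terms $\sum_i\gnp[\partial_t u-{\cal I}_{H}^{u}\partial_t u]{\phi_\sigma n_i}_{\G_i}$ and $\sum_i\gnp[p-{\cal I}_{H}^{p}p]{\phi_z\cdot n_i}_{\G_i}$ without losing powers of $h$, which forces the use of the $\X_h$- and $Z_h$-norm duality \eqref{eq:trace-2} (hence the need for the $\dvr_h\phi_\sigma$, $\dvr_h\phi_z$ control) and is precisely where the fractional $H^{\frac12}$ powers and the coarse-scale rates $H^{m_1-\frac12}$, $H^{m_2-\frac12}$ enter; this, together with obtaining all bounds robustly in $c_0$ (which requires the second time-differentiation and the $Z_{h,0}$ inf-sup as in the stability proof), follows the techniques of \cite{APWY,arbogast2000mixed,eldar_elastdd}. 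The mortar coarseness Assumption~\eqref{eq:mortar_assumption} is used throughout only indirectly, via the inf-sup conditions of Section~\ref{subsec:Existence-of-inf-sup} and the existence of the projections $\Pi_0^\sigma$, $\Pi_0^z$.
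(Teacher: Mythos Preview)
Your proposal is correct and follows essentially the same route as the paper's proof: the same error splitting via $\Pi_0^\sigma$, $\Pi_0^z$, and the $L^2$-projections; the same energy argument on the weakly continuous formulation with a first time-differentiation for the basic estimate and a second one (with the integration-by-parts-in-time trick on the $\langle\partial_t(u-{\cal I}_H^u u),\partial_t\phi_\sigma n_i\rangle$ term) to control $\|\dvr_h\phi_z\|$ robustly in $c_0$; the same use of the $\X_{h,0}$/$Z_{h,0}$ inf-sup conditions for $\phi_u,\phi_\gamma,\phi_p$ and the mortar inf-sup conditions for $\phi_{\lambda^u},\phi_{\lambda^p}$; and the same handling of the nonconformity terms via the $H^{1/2}$ duality \eqref{eq:trace-2}. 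Two cosmetic differences: the paper splits the mortar errors with the trace projections $\Qc_h^u,\Qc_h^p$ rather than the Scott--Zhang interpolants (your choice is arguably cleaner since then $\phi_{\lambda^\star}\in\Lambda_H^\star$ directly), and the paper avoids Gronwall entirely by substituting the inf-sup bounds $\|\phi_p\|\le C\|\phi_z\|+\cdots$ and $\|\phi_u\|+\|\phi_\gamma\|\le C\|\phi_\sigma+\alpha\phi_p I\|+\cdots$ back into the energy inequality rather than leaving those terms on the right.
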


\begin{proof}
First, note that the solution to \eqref{eq:cts1-1}$\--$\eqref{eq:cts5-1} satisfies,
for $1\le i \le N$,
\begin{align}
& \inp[A(\sigma + \a p I)]{\tau}_{\Oi} + \inp[u]{\dvr{\tau}}_{\Oi}+\inp[\gamma]{\tau}_{\Oi}-\gnp[u]{\t\,n_i}_{\Gamma_i}=0, & \forall\tau\in\X_i,\label{eq:cts1-sub}\\
& \inp[\dvr{\sigma}]{v}_{\Oi}=-\inp[f]{v}_{\Oi}, & \forall v\in V_i,\label{eq:cts2-sub}\\
& \inp[\sigma]{\xi}_{\Oi}=0, & \forall\xi\in\W_i,\label{eq:cts3-sub}\\
& \inp[\K z]{\zeta}_{\Oi}-\inp[p]{\dvr{\zeta}}_{\Oi}+\gnp[p]{\zeta\cdot n_i}_{\Gamma_i}=0, & \forall \zeta\in Z_i,\label{eq:cts4-sub}\\
& c_{0}\inp[\dt{p}]{w}_{\Oi}+\a\inp[\dt A(\sigma + \a pI)]{wI}_{\Oi}
+\inp[\dvr{z}]{w}_{\Oi}=\inp[g]{w}_{\Oi}, & \forall w\in W_i,\label{eq:cts5-sub}
\end{align}
Subtracting the weakly continuous normal stress and velocity system (\ref{eq:fe-mono-weak-1})$\--$(\ref{eq:fe-mono-weak-5})
from (\ref{eq:cts1-sub})$\--$(\ref{eq:cts5-sub}) gives
\begin{align}
  & \inp[A\left(\left(\s-\sigma_{h}\right)+\a\left(p-p_{h}\right)I\right)]{\tau}
  +\inp[u-u_{h}]{\dvr_h {\tau}}
  + \inp[\gamma-\gamma_{h}]{\tau} - \sum_{i=1}^{N}\gnp[u]{\tau n_i}_{\Gamma_{i}} = 0, &&  \forall\tau\in\X_{h,0},\label{eq:error-1}\\
& \inp[\dvr_h{\left(\s-\sigma_{h}\right)}]{v} = 0, &&  \forall v\in V_{h},\label{eq:error-2}\\
	& \inp[\left(\s-\s_{h}\right)]{\xi}=0, && \forall\xi\in\mathbb{Q}_{h},\label{eq:error-3}\\
  & \inp[\K\left(z-z_{h}\right)]{\zeta}-\inp[p-p_{h}]{\dvr_h{\zeta}}
  + \sum_{i=1}^{N}\gnp[p]{\zeta\cdot n_{i}}_{\Gamma_{i}} = 0, &&  \forall\zeta\in Z_{h,0},\label{eq:error-4}\\
  & c_{0}\inp[\dt(p-p_{h})]{w}
  +\a(\dt A((\s-\s_{h}) + \a(p-p_{h})I),wI)
  + (\dvr_h(z-z_{h}),w) = 0, && \forall w\in W_{h}. \label{eq:error-5}
\end{align}
Next, rewrite the above error equations in terms of the approximation
errors $\psi_{\star}$ and discretization errors $\phi_{\star}$, for $\star\in\{\s,u,\g,z,p,\lambda^u,\lambda^p\}$
as follows: 
\begin{align*}
\sigma-\sigma_{h} & =(\sigma-\Pi_{0}^{\s}\sigma)+(\Pi_{0}^{\s}\sigma-\sigma_{h}):=\psi_{\s}+\phi_{\s},\\
u-u_{h} & =(u-{\cal P}_{h}^{u}u)+({\cal P}_{h}^{u}u-u_{h}):=\psi_{u}+\phi_{u},\\
\gamma-\gamma_{h} & =(\gamma-{\cal R}_{h}\gamma)+({\cal R}_{h}\gamma-\gamma_{h}):=\psi_{\gamma}+\phi_{\gamma},\\
	z-z_{h} & =(z-\Pi_{0}^{z}z)+(\Pi_{0}^{z}z-z_{h}):=\psi_{z}+\phi_{z},\\
	p-p_{h} & =(p-{\cal P}_{h}^{p}p)+({\cal P}_{h}^{p}p-p_{h}):=\psi_{p}+\phi_{p},\\
	u-\lHu & =(u-\Quh u)+(\Quh u-\lHu):=\psi_{\lambda^u}+\phi_{\lambda^u},\\
	p-\lHp & =(p-\Qph p)+(\Qph p-\lHp):=\psi_{\lambda^p}+\phi_{\lambda^p}.
\end{align*}
Combining (\ref{eq:error-2}) with (\ref{eq:pi-weak-1}) gives 
\begin{equation}
  \dvr_h\phi_{\sigma}=0,
  \label{eq:err-proof-1}
\end{equation}
and (\ref{eq:error-3}) combined with (\ref{eq:pi-weak-2}) gives
\begin{equation}
\inp[\phi_{\s}]{\xi}=0\,\,\,\text{for }\xi\in\mathbb{Q}_{h}.\label{eq:err-proof-2}
\end{equation}
We rewrite error equation (\ref{eq:error-1}) as 
\begin{align}
	  & \inp[A\left(\phi_{\s} + \alpha\phi_{p}I\right)]{\tau}
          + \inp[\phi_u]{\dvr_h{\tau}}
          + \inp[\phi_{\gamma}]{\tau}
          \nonumber \\
          & \qquad
          = -\inp[A\left(\psi_{\s}+\alpha\psi_{p}I\right)]{\tau}-\inp[\psi_{\gamma}]{\tau}
          +\sum_{i=1}^{N}\gnp[u-{\cal I}_{H}^{u}u]{\tau n_i}_{\Gamma_{i}},
          \label{eq:err-proof-2-1}
\end{align}
where we have used that $\sum_{i=1}^{N}\gnp[{\cal I}_{H}^{u}u]{\tau n_i}_{\Gamma_{i}}=0$
for any $\tau\in\X_{h,0}$. Differentiating the above equation with
respect to time gives 
\begin{align}
& \inp[\partial_{t}A\left(\phi_{\s}+\alpha\phi_{p}I\right)]{\tau} + \inp[\partial_{t}\phi_u]{\dvr_h{\tau}} +\inp[\partial_{t}\phi_{\gamma}]{\tau}\nonumber \\
& \quad \quad = -\inp[\partial_{t}A\left(\psi_{\s}+\alpha\psi_{p}I\right)]{\tau} -\inp[\partial_{t}\psi_{\gamma}]{\tau} + \sum_{i=1}^{N}\gnp[\partial_{t}\left(u-{\cal I}_{H}^{u}u\right)]{\tau n_i}_{\Gamma_{i}}.\label{eq:err-proof-3}
\end{align}
Taking $\tau=\phi_{\s}$ in (\ref{eq:err-proof-3}) and using (\ref{eq:err-proof-1})
and (\ref{eq:err-proof-2}) gives 
\begin{align}
	& \inp[\partial_{t}A\left(\phi_{\s}+\alpha\phi_{p}I\right)]{\phi_{\s}}=-\inp[\partial_{t}A\left(\psi_{\s}+\alpha\psi_{p}I\right)]{\phi_{\s}}-\inp[\partial_{t}\psi_{\gamma}]{\phi_{\s}}+\sum_{i=1}^{N}\inp[\partial_{t}\left(u-{\cal I}_{H}^{u}u\right)]{\phi_{\s}n_i}_{\Gamma_{i}}.\label{eq:err-proof-4}
\end{align}
Error equation (\ref{eq:error-5}) can be written as 
\begin{align}
  c_{0}\inp[\partial_{t}\phi_{p}]{w} + \a\inp[\partial_{t}A\left(\phi_{\s} + \a\phi_{p}I\right)]{wI} + \inp[\dvr_h{\phi_{z}}]{w} = -\a\inp[\partial_{t}A\left(\psi_{\s} + \a\psi_{p}I\right)]{wI},
 \label{eq:err-proof-5} 
\end{align}
where we have used \eqref{eq:pressure-project} and \eqref{eq:pi-weak-d-1}.	
Taking $w=\phi_{p}$ in (\ref{eq:err-proof-5}) and combining the
resulting equation with (\ref{eq:err-proof-4}), we obtain
\begin{align}
  & \frac{1}{2}\partial_{t}\left(\|A^{\frac{1}{2}}\left(\phi_{\s}+\alpha\phi_{p}I\right)\|^{2}+c_{0}\|\phi_{p}\|^{2}\right) + \inp[\dvr_h{\phi_{z}}]{\phi_{p}} \nonumber \\
 & \qquad =-\inp[\partial_{t}A\left(\psi_{\s}+\alpha\psi_{p}I\right)]{\phi_{\s}+\alpha\phi_{p}I}
-\inp[\partial_{t}\psi_{\gamma}]{\phi_{\s}}+\sum_{i=1}^{N}\gnp[\partial_{t}\left(u-{\cal I}_{H}^{u}u\right)]{\phi_{\s}n_i}_{\Gamma_{i}}.\label{eq:err-proof-6}
\end{align}
Error equation (\ref{eq:error-4}) can be written as 
\begin{align}
& \inp[\K\phi_{z}]{\zeta}- \inp[\phi_{p}]{\dvr_h{\zeta}} =-\inp[\K\psi_{z}]{\zeta}+\sum_{i=1}^{N}\gnp[{\cal I}_{H}^{p}p-p]{\zeta\cdot n_{i}}_{\Gamma_{i}},\label{eq:err-proof-8}
\end{align}
where we have used \eqref{eq:pressure-project} and
\begin{equation}
\sum_{i=1}^{N}\gnp[{\cal I}_{H}^{p}p]{\zeta\cdot n_{i}}_{\Gamma_{i}}=0 \quad \forall \zeta \in Z_{h,0}. \label{eq:cont_lagrang_p}
\end{equation}
Taking test function $\zeta=\phi_{z}$ in equation (\ref{eq:err-proof-8}) 
and combining the resulting equation with (\ref{eq:err-proof-6}) gives
\begin{align}
& \frac{1}{2}\partial_{t}\left(\|A^{\frac{1}{2}}\left(\phi_{\s}+\alpha\phi_{p}I\right)\|^{2}+c_{0}\|\phi_{p}\|^{2}\right)+\|K^{-\half}\phi_{z}\|^{2} = -\inp[\partial_{t}A\left(\psi_{\s} + \alpha\psi_{p}I\right)]{\phi_{\s} + \alpha\phi_{p}I}\nonumber \\
  & \quad\quad -\inp[\partial_{t}\psi_{\gamma}]{\phi_{\s}} - \inp[\K\psi_{z}]{\phi_{z}}
  -\sum_{i=1}^{N}\gnp[\partial_{t}({\cal I}_{H}^{u}u-u)]{\phi_{\s}n_i}_{\Gamma_{i}}
+\sum_{i=1}^{N}\gnp[{\cal I}_{H}^{p}p-p]{\phi_{z}\cdot n_{i}}_{\Gamma_{i}}.\label{eq:err-proof-9}
\end{align}
We bound the first three terms on the right hand side of (\ref{eq:err-proof-9}) as follows:
\begin{align}
& |\inp[\partial_{t}A\left(\psi_{\s}+\alpha\psi_{p}I\right)]{\phi_{\s}+\alpha\phi_{p}I}|
  + |\inp[\partial_{t}\psi_{\gamma}]{\phi_{\s}}| + |\inp[\K\psi_{z}]{\phi_{z}}| \nonumber \\
& \quad\quad \le\|\partial_{t}A\left(\psi_{\s}+\alpha\psi_{p}I\right)\|\|\phi_{\s}+\alpha\phi_{p}I\|+\|\partial_{t}\psi_{\gamma}\|\|\phi_{\s}\|+\|\K\psi_{z}\|\|\phi_{z}\|\nonumber \\
	& \quad\quad\quad\quad\le \frac{C}{\epsilon}\left(\|\partial_{t}\psi_{\s}\|^{2}+\|\partial_{t}\psi_{p}\|^{2}+\|\partial_{t}\psi_{\gamma}\|^{2}+\|\psi_{z}\|^{2}\right)+\epsilon\left(\|\phi_{\s}\|^{2}+\|\phi_{p}\|^{2}+\|\phi_{z}\|^{2}\right),\label{eq:err-proof-10}
\end{align}
where we have used Young's inequality for a some $\epsilon > 0$. 
	
Next, we give a bound on the last two boundary terms in the right
hand side of equation (\ref{eq:err-proof-9}). For this, we note that
the following bounds hold for any $(\tau,v)\in H(\text{div};\Omega,\mathbb M) \times H^1_0(\Omega,\mathbb R^d)$ and $(\zeta,w)\in H(\text{div};\Omega) \times H^1_0(\Omega)$:
\begin{align}
\gnp[{\cal I}_{H}^{u}v-v]{\tau n_i}_{\Gamma_{i}} & = \gnp[E_{i}({\cal I}_{H}^{u}v-v)]{\tau n_{i}}_{\dOi} \nonumber\\
& \le C\|E_{i}({\cal I}_{H}^{u}v-v)\|_{\frac{1}{2},\dOi}\|\tau\|_{H(\text{div};\Oi)} 
\le C\|{\cal I}_{H}^{u}v-v\|_{\frac{1}{2},\Gi}\|\tau\|_{H(\text{div};\Oi)},
\label{eq:err-proof-10-1} \\
\gnp[{\cal I}_{H}^{p}w-w]{\zeta \cdot n_i}_{\Gamma_{i}} &
= \gnp[E_{i}({\cal I}_{H}^{p}\zeta-\zeta)]{\zeta \cdot n_{i}}_{\dOi} \nonumber \\
& \le C\|E_{i}({\cal I}_{H}^{p}\zeta-\zeta)\|_{\frac{1}{2},\dOi}\|\zeta\|_{H(\text{div};\Oi)} 
\le C\|{\cal I}_{H}^{p}\zeta-\zeta\|_{\frac{1}{2},\Gi}\|\zeta\|_{H(\text{div};\Oi)}, \label{eq:err-proof-10-2}
\end{align}
where $E_{i}$ denotes the extension by zero from $\Gamma_i$ to 
$\dOi$, which is continuous in the $H^{\frac12}$-norm for functions that are zero on $\partial\Gamma$, and we have used the trace inequalities in (\ref{eq:trace-2}). Taking $(\tau,v)=(\fs,\dt u)$ and $(\zeta,w)=(\fz,p)$ in (\ref{eq:err-proof-10-1}) and (\ref{eq:err-proof-10-2}), respectively, and using Young's inequality gives
\begin{align}        
  & \gnp[\partial_{t}({\cal I}_{H}^{u}u-u)]{\phi_{\s}n_i}_{\Gamma_{i}}
  \le \frac{C}{\epsilon}\|\partial_{t}({\cal I}_{H}^{u}u-u)\|_{\frac{1}{2},\Gi}^2 + \epsilon \|\phi_{\s}\|_{\Omega_i}^2, \label{bound-gamma-1}\\
  & \gnp[{\cal I}_{H}^{p}p-p]{\phi_{z}\cdot n_{i}}_{\Gamma_{i}}
  \le \frac{C}{\epsilon}\|{\cal I}_{H}^{p}p-p\|_{\frac{1}{2},\Gi}^2
  + \epsilon(\|\phi_{z}\|_{\Omega_i}^2 + \|\dvr \phi_z\|_{\Omega_i}^2),
  \label{bound-gamma-2}
\end{align}
where we also used \eqref{eq:err-proof-1}. Combining inequalities \eqref{eq:err-proof-9}--\eqref{bound-gamma-1} and integrating with respect to time from $0$ to $t\in(0,T]$ gives 
\begin{align}
& \|A^{\frac{1}{2}}\left(\phi_{\s} +\alpha\phi_{p}I\right)(t)\|^{2} + c_{0}\|\phi_{p}(t)\|^{2} + \int_{0}^{t}\|K^{-\frac12}\phi_{z}\|^{2}\nonumber \\
  & \quad \le C\int_{0}^{t}\left(\|\partial_{t}\psi_{\s}\|^{2} +\|\partial_{t}\psi_{p}\|^{2} +\|\partial_{t}\psi_{\gamma}\|^{2} +\|\psi_{z}\|^{2} + \|{\cal I}_{H}^{u}\dt u-\dt u\|_{\frac{1}{2},\G}^{2}
  + \|{\cal I}_{H}^{p}p-p\|_{\frac{1}{2},\G}^{2} \right)ds   \nonumber \\
  & \qquad
  +\epsilon\int_{0}^{t}\left(\|\phi_{\s}\|^{2} +\|\phi_{p}\|^{2} +\|\phi_{z}\|^{2}\right)ds
  + C\int_{0}^{t}\|\text{div}_h\phi_{z}\|^2 ds 
+\|A^{\frac{1}{2}}\left(\phi_{\s}+\alpha\phi_{p}I\right)(0)\|^{2}+c_{0}\|\phi_{p}(0)\|^{2}.
\label{eq:err-proof-13}
\end{align}
	
Next, we bound the errors of the form $\phi_{\star}$ for $\star\in\{\s,\gamma,u,p\}$. 
Using the inf-sup condition (\ref{eq:inf-sup-elast-weak}), the error equation \eqref{eq:err-proof-2-1}, and
(\ref{eq:err-proof-1}) gives
\begin{align}
& \|\phi_{u}\|+\|\phi_{\gamma}\| \le C_E \sup_{0\ne\tau\in\mathbb{X}_{h,0}}\frac{\inp[\phi_{u}]{\dvr_h{\tau}} +\inp[\phi_{\gamma}]{\tau}}{\|\tau\|_{\X_{h}}}\nonumber \\
  & \quad = C_E \sup_{0\ne\tau\in\mathbb{X}_{h,0}}\frac{1}{\|\tau\|_{\X_{h}}}\Big(\inp[A\left(\phi_{\s} +\alpha\phi_{p}I\right)]{\tau} +\inp[A\left(\psi_{\s}+\alpha\psi_{p}I\right)]{\tau}+\inp[\psi_{\gamma}]{\tau}
  -\sum_{i=1}^{N}\inp[\left({\cal I}_{H}^{u}u-u\right)]{\tau n_i}_{\Gamma_{i}}\Big)\nonumber \\
  & \quad\le C\Big(\|\phi_{\s}+\alpha\phi_{p}I\|+\|\psi_{\s}\|+\|\psi_{\gamma}\| +\|\psi_{p}\|+\|{\cal I}_{H}^{u}u-u\|_{\frac{1}{2},\G}\Big),
  \label{eq:err-proof-14}
\end{align}
where we have used
(\ref{eq:err-proof-10-1}) with $v=u$ in the last inequality. The above inequality implies
\begin{align}
  & \int_{0}^{t}\left(\|\phi_{u}\|^{2} +\|\phi_{\gamma}\|^{2}\right)ds \le C\int_{0}^{t}\Big(\|\phi_{\s}\|^{2} + \|\phi_{p}\|^{2}
+  \|\psi_{\s}\|^{2}+\|\psi_{\gamma}\|^{2}+\|\psi_{p}\|^{2}+\|{\cal I}_{H}^{u}u-u\|_{\frac{1}{2},\G}^{2}\Big)ds.\label{eq:err-proof-15}
\end{align}
To bound $\|\phi_{p}\|$, we use the inf-sup condition (\ref{eq:inf-sup-darcy-weak}) and the error equation \eqref{eq:err-proof-8} to get 
\begin{align}
& \|\phi_{p}\|\le C_D\sup_{0\ne\zeta\in Z_{h,0}}\frac{\sum_{i=1}^{N}\left(\dvr\zeta,\phi_{p}\right)_{\Omega_{i}}}{\|\zeta\|_{Z_{h}}}\nonumber \\
	& \quad\quad = C_D \sup_{0\ne\zeta\in Z_{h,0}}\frac{\inp[\K\phi_{z}]{\zeta}+\inp[\K\psi_{z}]{\zeta}-\sum_{i=1}^{N}\gnp[{\cal I}_{H}^{p}p-p]{\zeta\cdot n_{i}}_{\Gamma_{i}}}{||\zeta||_{Z_{h}}}\nonumber \\
& \quad\quad\le C\Big(\|\psi_{z}\| + \|\phi_{z}\|+\|{\cal I}_{H}^{p}p-p\|_{\frac{1}{2},\G}\Big), \label{eq:err-proof-16}
\end{align}
where we have used (\ref{eq:err-proof-10-2}) with $w=p$
to obtain the last inequality. The above inequality yields
\begin{equation}
\int_{0}^{t}\|\phi_{p}\|^{2}ds\le C\int_{0}^{t}\Big(\|\psi_{z}\|^{2}+\|\phi_{z}\|^{2} +\|{\cal I}_{H}^{p}p-p\|_{\frac{1}{2},\G}^{2}\Big)ds.\label{eq:err-proof-17}
\end{equation}
	
To bound the term $\displaystyle\intg\|\fs\|^{2}ds$, which appears on the right-hand side of \eqref{eq:err-proof-15}, we take $\tau=\fs$ in (\ref{eq:err-proof-2-1})
and $\xi=\fg$ in (\ref{eq:error-3}), and use \eqref{eq:err-proof-1}$\--$\eqref{eq:err-proof-2} to get
\begin{align}
  & \|\Ahalf\fs\|^{2}=-\inp[\Ahalf\alpha\phi_{p}I]{\fs}-\inp[A\left(\psi_{\s}+\alpha\psi_{p}I\right)]{\fs}-\inp[\psi_{\gamma}]{\fs}-\sum_{i=1}^{N}\gnp[{\cal I}_{H}^{u}u-u]{\fs n_i}_{\Gamma_{i}}
+\inp[\psi_{\s}]{\fg}
  \nonumber \\
  & \qquad \le C \Big( \big(\|\fp\|+\|\ss\|+\|\sp\|+\| \sg \|
  + \|{\cal I}_{H}^{u}u-u\|_{\frac{1}{2},\G} \big)\| \fs \|
    +\|\ss\|\|\fg\|\Big)
  \nonumber \\
  & \qquad
  \le \frac{C}{\epsilon}
  \Big(
  \|\fp\|^{2}
  + \|\ss\|^{2}+\|\sp\|^{2}+\|\sg\|^{2}
  +\|{\cal I}_{H}^{u}u-u\|_{\frac{1}{2},\G}^{2}\Big)
  +\epsilon\left(\|\fs\|^{2}+\|\fg\|^{2}\right),\label{eq:err-proof-18}
\end{align}
where we have used (\ref{eq:err-proof-10-1}), \eqref{eq:err-proof-1}, and Young's inequality. Integrating (\ref{eq:err-proof-18}) with respect to time from $0$ to $t\in(0,T]$, and taking $\epsilon$ small enough, we get 
\begin{align}
  & \intg\|\fs\|^{2}ds\le C\intg\Big(
  \|\fp\|^{2} +
  \|\ss\|^{2}+\|\sp\|^{2}+\|\sg\|^{2}+\|{\cal I}_{H}^{u}u-u\|_{\frac{1}{2},\G}^{2}
  \Big)ds
    +\epsilon\intg\|\fg\|^{2}ds.\label{eq:err-proof-19}
\end{align}
	
Combining (\ref{eq:err-proof-13})$\--$(\ref{eq:err-proof-19}) and
(\ref{eq:err-proof-1}), and taking $\epsilon$ small enough gives
\begin{align}
& \|\fs+\a\fp I\|^{2}+\|\fu\|^{2}+\|\fg\|^{2}+ c_0\|\fp\|^{2}
+\|\dvr_h\fs\|^{2}\nonumber \\
& \qquad +\intg\left(\|\fs\|^{2}+\|\fu\|^{2}+\|\fg\|^{2}+\|\fz\|^{2}
+\|\fp\|^{2}+\|\dvr_h\fs\|^{2}\right) ds \nonumber \\
& \quad \le C \bigg( \intg\left(\|\dt\ss\|^{2} +\|\dt\sp\|^{2}+\|\dt\sg\|^{2}+\|\ss\|^{2}+\|\sp\|^{2}+\|\sg\|^{2}+\|\sz\|^{2}\right)ds
\nonumber \\
& \qquad
+ \|\ss\|^{2} +\|\sp\|^{2}+\|\sg\|^{2}
+ \|({\cal I}_{H}^{u}u-u)(t)\|_{\frac{1}{2},\G}^{2} + \|({\cal I}_{H}^{p}p-p)(t)\|_{\frac{1}{2},\Gi}^{2}
\nonumber \\
& \qquad + \intg\left(\|{\cal I}_{H}^{u}\dt u-\dt u\|_{\frac{1}{2},\G}^{2}
+\|{\cal I}_{H}^{u}u-u\|_{\frac{1}{2},\G}^{2} +\|{\cal I}_{H}^{p}p-p\|_{\frac{1}{2},\G}^{2}\right)ds
+ \|\fs(0)\|^{2}+\|\fp(0)\|^{2} \bigg).\label{eq:err-proof-19-1}
\end{align}
\textbf{Bound on $\|\dvr_h\phi_{z}\|$.}

\smallskip
Next, we obtain a $L^2(0,T)$ bound on the error in $\dvr_h z_h$, as well as bounds on the error in $\|z_{h}(t)\|$ and $\|p_{h}(t)\|$ for all $t\in(0,t]$, which are independent of $c_{0}$. We start by taking $w=\phi_{z}$ in (\ref{eq:error-5}) to get
\begin{align*}
& \|\dvr_h\phi_{z}\|^{2} = -\inp[c_{0}\dt\fp]{\dvr_h\fz} -\inp[c_{0}\dt\sp]{\dvr_h\fz} -\a\inp[\dt A\left(\fs+\a\fp I\right)]{(\dvr\fz) I}\\
  & \qquad -\a\inp[\dt A\left(\ss+\a\sp I\right)]{(\dvr_h\fz) I} - \inp[\sz]{\dvr_h\fz}\\
& \quad
  = -\inp[c_{0}\dt\fp]{\dvr_h\fz} -\a\inp[\dt A\left(\fs+\a\fp I\right)]{(\dvr_h\fz) I}
  - \a\inp[\dt A\left(\ss+\a\sp I\right)]{(\dvr_h\fz) I},
\end{align*}
where the last equality follows from (\ref{eq:pressure-project})
and (\ref{eq:pi-weak-d-1}). The above inequality implies
\begin{align}
  \int_0^T\|\dvr_h\phi_{z}\|^{2} ds \le C\int_0^T\left(c_0\|\dt\fp\|^{2} + \|\dt(\fs+\a\fp I)\|^{2} + \|\sp\|^{2}
  +\|\sz\|^2 +\|\ss\|^{2}\right) ds.\label{eq:err-proof-21}
\end{align}
In order to bound $c_0\|\dt\fp\|^{2}$ and $\|\dt(\fs+\a\fp I)\|^{2}$, we 
differentiate in time (\ref{eq:err-proof-2}) and (\ref{eq:err-proof-8}), combine them with (\ref{eq:err-proof-3}) and \eqref{eq:err-proof-5}, and take test functions $\t=\dt\fs$, $\xi=\dt\fg$, $\zeta=\phi_{z}$, and $w=\dt\fp$ to get the following time differentiated version of
(\ref{eq:err-proof-9}): 
\begin{align}
  & \|\partial_{t}A^{\frac{1}{2}}\left(\phi_{\s}+\alpha\phi_{p}I\right)\|^{2} +c_{0}\|\dt\phi_{p}\|^{2}+\half\dt\|K^{-\half}\phi_{z}\|^{2}
\nonumber \\  & \quad =-\inp[\partial_{t}A\left(\psi_{\s}+\alpha\psi_{p}I\right)]{\dt\left(\phi_{\s}+\alpha\phi_{p}I\right)}
-\inp[\partial_{t}\psi_{\gamma}]{\dt\left(\phi_{\s}+\alpha\phi_{p}I\right)}-\inp[\dt\K\psi_{z}]{\phi_{z}} \nonumber \\
& \qquad -\sum_{i=1}^{N}\gnp[\partial_{t}\left({\cal I}_{H}^{u}u-u\right)]{\dt\phi_{\s}n_i}_{\Gamma_{i}}
+\sum_{i=1}^{N}\gnp[{\cal I}_{H}^{p}\dt p-\dt p]{\phi_{z}\cdot n_{i}}_{\Gamma_{i}},
\label{eq:err-proof-22}
\end{align}
where we have used the fact that $\inp[\partial_{t}\psi_{\gamma}]{\dt\alpha\phi_{p}I}=0$
to write
\begin{equation*}	\inp[\partial_{t}\psi_{\gamma}]{\dt\phi_{\s}}=\inp[\partial_{t}\psi_{\gamma}]{\dt\left(\phi_{\s}+\alpha\phi_{p}I\right)}.
\end{equation*}
Using the Cauchy-Schwarz and Young's inequalities for the first three terms on the right in \eqref{eq:err-proof-22} with $\epsilon > 0$ and taking $\epsilon$ small enough results in
\begin{align}
  & \|\partial_{t}(\phi_{\s}+\alpha\phi_{p}I)\|^{2} + c_{0}\|\dt\phi_{p}\|^{2} + \dt\|\phi_{z}\|^{2}
  \le C
\bigg( \|\dt\psi_{\s}\|^2 + \|\dt\psi_p\|^2 + \|\dt\psi_\gamma\|^2 + \|\dt\psi_z\|^2
+ \epsilon\|\phi_z\|^2
\nonumber \\
& \qquad
  + \bigg|\sum_{i=1}^{N}\gnp[\partial_{t}\left({\cal I}_{H}^{u}u-u\right)]{\dt\phi_{\s}n_i}_{\Gamma_{i}}\bigg|
+ \bigg|\sum_{i=1}^{N}\gnp[{\cal I}_{H}^{p}\dt p-\dt p]{\phi_{z}\cdot n_{i}}_{\Gamma_{i}}\bigg| \bigg).
\label{eq:err-proof-22a}
\end{align}
To bound $\gnp[\partial_{t}\left({\cal I}_{H}^{u}u-u\right)]{\dt\phi_{\s}n_i}_{\Gamma_{i}}$,
we use integration by parts to rewrite it as
\begin{align}
  & \gnp[\partial_{t}\left({\cal I}_{H}^{u}u-u\right)]{\dt\phi_{\s}n_i}_{\Gamma_{i}}=\ddt\left(\gnp[\partial_{t}\left({\cal I}_{H}^{u}u-u\right)]{\phi_{\s}n_i}_{\Gamma_{i}}\right)
  -\gnp[\partial_{t}^{2}\left({\cal I}_{H}^{u}u-u\right)]{\phi_{\s}n_i}_{\Gamma_{i}}.\label{eq:err-proof-23}
\end{align}
To bound the last term on the right in \eqref{eq:err-proof-23}
we take $(\tau,v)=(\fs,\dt^{2}u)$ in (\ref{eq:err-proof-10-1}) and use \eqref{eq:err-proof-1} to get 
\begin{equation}
  \Big|\gnp[\partial_{t}^{2}\left({\cal I}_{H}^{u}u-u\right)]{\phi_{\s}n_i}_{\Gamma_{i}}\Big|
  \le C \|{\cal I}_{H}^{u}\dt^{2}u-\dt^{2}u\|_{\frac{1}{2},\Gi} \|\fs\|_{L^2(\Oi)}
\le \frac{C}{\epsilon}\|{\cal I}_{H}^{u}\dt^{2}u-\dt^{2}u\|_{\frac{1}{2},\Gi}^2 + \epsilon\|\fs\|_{L^2(\Oi)}^2.\label{eq:err-proof-24}
\end{equation}
To bound the term $\gnp[{\cal I}_{H}^{p}\dt p-\dt p]{\phi_{z}\cdot n_{i}}_{\Gamma_{i}}$,
in \eqref{eq:err-proof-22a} we take $(\zeta,w)=(\fz,\dt p)$ in (\ref{eq:err-proof-10-2}) to get 
\begin{align}
  & \Big|\gnp[{\cal I}_{H}^{p}\dt p-\dt p]{\fz\cdot n_{i}}_{\Gamma_{i}}\Big|
  \le C \|{\cal I}_{H}^{p}\dt p-\dt p\|_{\frac{1}{2},\Gi} \|\fz\|_{H(\dvr;\Oi)}
\le \frac{C}{\epsilon}\|{\cal I}_{H}^{p}\dt p-\dt p\|_{\frac{1}{2},\Gi}^2 + \epsilon \|\fz\|_{H(\dvr;\Oi)}^2.\label{eq:err-proof-25}
\end{align}
Combining (\ref{eq:err-proof-22a})$\--$(\ref{eq:err-proof-25}),
integrating with respect to time from $0$ to $t\in(0,T]$, and using \eqref{bound-gamma-2} for the first term on the right in \eqref{eq:err-proof-23}, we obtain
\begin{align}
& \|\phi_{z}\|^{2} +\intg\left(\|\partial_{t}(\phi_{\s}+\alpha\phi_{p}I)\|^{2} +c_0\|\dt\phi_{p}\|^{2}\right)ds \nonumber \\
  & \ \ \le C\intg\Big(\|\dt\ss\|^{2}+\|\dt\sp\|^{2}+\|\dt\sg\|^{2}+\|\dt\sz\|^{2}+\|{\cal I}_{H}^{u}\dt^{2}u-\dt^{2}u\|_{\frac{1}{2},\G}^{2}
  +\|{\cal I}_{H}^{p}\dt p-\dt p\|_{\frac{1}{2},\G}^{2}\Big)ds
  \nonumber \\
  & \qquad
  +C\|({\cal I}_{H}^{u}\dt u -\dt u)(t)\|_{\frac{1}{2},\G}^{2}
+\epsilon\left(\intg\left(\|\fs\|^{2}+\|\fz\|^{2}+\|\dvr_h\fz\|^{2}\right)ds+\|\fs(t)\|^{2}\right)\nonumber \\
& \qquad +C\bigg(\|\phi_{z}(0)\|^{2} + \|\fs(0)\|^{2}
+\|({\cal I}_{H}^{u}\dt u -\dt u)(0)\|_{\half,\G}^{2}\bigg).\label{eq:err-proof-26}
\end{align}
Combining (\ref{eq:err-proof-21}) and (\ref{eq:err-proof-26}) and taking $\epsilon$ small enough implies
\begin{align}
  & \|\phi_{z}\|^{2} + \intg\|\dvr_h\phi_{z}\|^{2}ds\nonumber \\
  & \ \ \le C\intg\Big(\|\dt\ss\|^{2}+\|\dt\sp\|^{2}+\|\dt\sg\|^{2}+\|\dt\sz\|^{2}
  + \|\psi_p\|^2 + \|\psi_\sigma\|^2
  \nonumber \\
  & \ \ \
  +\|{\cal I}_{H}^{u}\dt^{2}u-\dt^{2}u\|_{\frac{1}{2},\G}^{2}
  +\|{\cal I}_{H}^{p}\dt p-\dt p\|_{\frac{1}{2},\G}^{2}\Big)ds
  +C\|({\cal I}_{H}^{u}\dt u -\dt u)(t)\|_{\frac{1}{2},\G}^{2}
  \nonumber \\
  & \ \ \
  +\epsilon\Big(\intg\left(\|\fs\|^{2}+\|\fz\|^{2}\right)ds +\|\fs(t)\|^{2}\Big)
  +C\Big(\|\phi_{z}(0)\|^{2} + \|\fs(0)\|^{2}
+\|({\cal I}_{H}^{u}\dt u-\dt u)(0)\|_{\half,\G}^{2}\Big).\label{eq:err-proof-27}
\end{align}
Finally, combining \eqref{eq:err-proof-27} with \eqref{eq:err-proof-19-1} taking $\epsilon$ small enough, and using \eqref{eq:err-proof-16} and the inequality
\begin{equation*}
  \|\fs\|\le C\left(\|\phi_{\s} +\alpha\phi_{p}I\| + \|\fp\|\right),%\label{eq:err-proof-30}
\end{equation*}
we arrive at
\begin{align}
& 
\|\fs(t)\|_{\X_{h}}^{2}+\|\fu(t)\|^{2}+\|\fg(t)\|^{2} +\|\fz(t)\|^{2} + \|\fp(t)\|^{2}
\nonumber \\
&\qquad\qquad
+\intg \left(\|\fs\|_{\X_{h}}^{2}
+\|\fu\|^{2}+\|\fg\|^{2}+\|\fz\|_{Z_h}^{2}
+\|\fp\|^{2} \right) ds 
\nonumber \\
& \quad \le C \bigg(\intg\Big(\|\dt\ss\|^{2}+\|\dt\sp\|^{2}+\|\dt\sg\|^{2}+\|\dt\sz\|^{2}
+\|\ss\|^{2} +\|\sp\|^{2}+\|\sg\|^{2}+\|\sz\|^{2}\Big)ds 
\nonumber \\
&\qquad
+\intg\Big(\|{\cal I}_{H}^{p}u-u\|_{\frac{1}{2},\Gamma}^{2}
          +\|\dt({\cal I}_{H}^{u} u-u)\|_{\frac{1}{2},\Gamma}^{2}
	  +\|\dt^{2}({\cal I}_{H}^{u}u-u)\|_{\frac{1}{2},\Gamma}^{2}\nonumber \\
          &\qquad +\|{\cal I}_{H}^{p}p-p\|_{\frac{1}{2},\Gamma}^{2}
          +\|\dt({\cal I}_{H}^{p}p-p)\|_{\frac{1}{2},\Gamma}^{2}\Big)ds
	+ \|\ss(t)\|^{2}+\|\sp(t)\|^{2}+\|\sg(t)\|^{2} + \|\sz(t)\|^{2}\nonumber \\
	  & \qquad + \|({\cal I}_{H}^{u}u-u)(t)\|_{\frac{1}{2},\Gamma}^{2}
          +\|\dt({\cal I}_{H}^{u} u- u)(t)\|_{\frac{1}{2},\Gamma}^{2}
          +\|({\cal I}_{H}^{p}p-p)(t)\|_{\frac{1}{2},\Gamma}^{2}\nonumber \\
	  & \qquad +\|\fs(0)\|^{2}+\|\fp(0)\|^{2}+\|\phi_{z}(0)\|^{2}
          +\|\dt({\cal I}_{H}^{u} u-u)(0)\|_{\frac{1}{2},\Gamma}^{2}\bigg).
          \label{eq:err-proof-38}
\end{align}
\textbf{Bound on $\|\phi_{\lambda^u}\|_{\G}$ and $\|\phi_{\lambda^p}\|_{\G}$.}
	
In order to bound the error in $\|\lHu\|_{\G}$, we take the difference between
equations (\ref{eq:cts1-sub}) and (\ref{eq:monodd-mmmfe1}) to get
\begin{align*}
  & \inp[A\left(\left(\s-\sigma_{h}\right)+\a\left(p-p_{h}\right)I\right)]{\tau}
  +\inp[u-u_{h}]{\dvr_h{\tau}} +\inp[\gamma-\gamma_{h}]{\tau}
  =\sum_{i=1}^{N}\gnp[u-\lambda_{H}^{u}]{\t\,n_{i}}_{\Gamma_{i}}, \quad \forall\tau\in\X_{h}.
\end{align*}
We can split the error terms in the above equation and use (\ref{eq:motor-project-1})
to rewrite it as
\begin{align*}
  \sum_{i=1}^{N}\gnp[\flu]{\t\,n_{i}}_{\Gamma_{i}} & =\inp[A\left(\fs+\a\fp\right)]{\tau}+\inp[A\left(\ss+\a\sp\right)]{\tau}
  +\inp[\fu]{\dvr_h{\tau}}\nonumber \\
  & \quad +\inp[\su]{\dvr_h{\tau}} +\inp[\fg]{\tau}+\inp[\sg]{\tau}, \quad \forall\tau\in\X_{h}.%\label{eq:err-proof-31}
\end{align*}
The inf-sup stability bound (\ref{eq:mortar-u-inf-sup}) combined with the above equation implies 
\begin{align}
 \|\flu\|_{\G} & \le \beta_E \sup_{0\ne\tau\in\X_{h}}\frac{\sum_{i=1}^{N}\gnp[\tau n_{i}]{\flu}_{\G_{i}}}{\|\tau\|_{\X_{h}}}
 = \beta_E\sup_{0\ne\tau\in\X_{h}}\frac{1}{\|\tau\|_{\X_{h}}}\Big(\inp[A\left(\fs+\a\fp\right)]{\tau} +\inp[A\left(\ss+\a\sp\right)]{\tau} \nonumber\\
 & \qquad + \inp[\fu]{\dvr_h{\tau}} +\inp[\su]{\dvr_h{\tau}} +\inp[\fg]{\tau}+\inp[\sg]{\tau} \Big) \nonumber\\
 & 
  \le C\big(\|\phi_{\s}\|+ \|\phi_{p}\|+\|\fu\|+\|\fg\| + \|\ss\|+\|\sp\|+\|\su\|+\|\sg\|  \big).
\end{align}
To bound the error in $\|\lHp\|_{\G}$ we take the difference between (\ref{eq:cts4-sub})
and (\ref{eq:monodd-mmmfe4}) and use (\ref{eq:motor-project-2}) to obtain
\begin{align*}
  & \inp[\K\fz]{\zeta}+\inp[\K\sz]{\zeta}-\inp[\fp]{\dvr_h{\zeta}}
  - \inp[\sp]{\dvr_h{\zeta}}
  =\sum_{i=1}^{N}-\gnp[\flp]{\zeta\cdot n_{i}}_{\Gamma_{i}}, \quad \forall\zeta\in Z_{h}.%\label{eq:err-proof-34}
\end{align*}
The inf-sup stability bound (\ref{eq:mortar-p-inf-sup}) combined with the above equation implies 
\begin{align}
\|\flp\|_{\G}& \le \beta_D \sup_{0\ne\zeta\in Z_{h}}\frac{\sum_{i=1}^{N}\gnp[\zeta\cdot n_{i}]{\flp}_{\G_{i}}}{\|\zeta\|_{Z_{h}}}\nonumber \\
& = \beta_D\sup_{0\ne\zeta\in Z_{h}} \frac{\inp[\K\fz]{\zeta}+\inp[\K\sz]{\zeta}-\sum_{i=1}^{N}\inp[\fp]{\dvr{\zeta}}_{\Omega_{i}}-\sum_{i=1}^{N}\inp[\sp]{\dvr{\zeta}}_{\Omega_{i}}}{\|\zeta\|_{Z_{h}}}\nonumber \\
& \le C\left(\|\fz\|+\|\fp\| +\|\sz\|+\|\sp\|\right). \label{eq:err-proof-35}
\end{align}

\noindent
{\bf Bound on the initial errors.}

In order to bound the initial errors $\|\fs(0)\|,\,\|\fp(0)\|$,
and $\|\phi_{z}(0)\|$ that appear in \eqref{eq:err-proof-38}, we recall that we obtain the discrete initial data from the elliptic projection of the continuous initial data, cf.
(\ref{eq:well-pose-initial-data}). Following the arguments similar
to the ones used to arrive at (\ref{eq:stab-bnd-13}), we get 
\begin{equation}
\|\fs(0)\|+\|\fp(0)\|+\|\phi_{\g}(0)\|+\|\phi_{z}(0)\| + \|\phi_u(0)\| \le C\left(\|\ss(0)\|+\|\sp(0)\|+\|\sg(0)\|+\|\sz(0)\|+\|\su(0)\|\right).\label{eq:err-proof-41}
\end{equation}

To bound terms $\|{\cal I}_{H}^{u}v-v\|_{\frac{1}{2},\Gamma}$ and $\|{\cal I}_{H}^{p}w-w\|_{\frac{1}{2},\Gamma}$ that appear in \eqref{eq:err-proof-38}, we use (\ref{eq:inter-proj-1})--\eqref{eq:inter-proj-1a} and (\ref{eq:trace-1})
	to obtain 
\begin{align}
& \|{\cal I}_{H}^{u}v-v\|_{\frac{1}{2},\Gamma}\le CH^{\hat{m}-\half}\|v\|_{\hat{m}+\half,\Omega}, & \frac12\le\hat{m}\le m+1, \label{eq:err-proof-39}\\
& \|{\cal I}_{H}^{p}w-w\|_{\frac{1}{2},\Gamma}\le CH^{\hat{m}-\half}\|w\|_{\hat{m}+\half,\Omega}, & \frac12\le\hat{m}\le m+1. \label{eq:err-proof-40}
	\end{align}

Finally, the assertion of the theorem follows by combining bounds (\ref{eq:err-proof-38})$\--$(\ref{eq:err-proof-40}) with the approximation results (\ref{eq:inter-proj-3})$\--$(\ref{eq:inter-proj-9}),
(\ref{eq:pi-weak-5})--\eqref{eq:pi-weak-div} and (\ref{eq:pi-weak-d-4})--\eqref{eq:pi-weak-d-div}.
\end{proof}

\begin{remark}\label{rem:err-order} 	
	The above theorem implies that for sufficiently smooth solution variables,
	the error in using our method is of ${\cal O}\left(h^{k+1}+h^{l+1}+h^{j+1}+h^{r+1}+h^{s+1}+H^{m+\half}\right)$.
	Assuming we use inf-sup stable pairs of FE spaces containing polynomials
	of degree $l=j=s$, and $k=r$, and $l\le k$, we could choose $H={\cal O}\left(h^{\frac{l+1}{m+1/2}}\right)$
	to get a total error bound of order ${\cal O}\left(h^{l+1}\right)$.
	For example, for the choice of $l=0$ and $m=1$, we could choose
	$H={\cal O}\left(h^{\frac{2}{3}}\right)$ and for $l=0$ and $m=2$,
	we could choose $H={\cal O}\left(h^{\frac{2}{5}}\right)$ to obtain
	a total convergence rate of ${\cal O}(h).$ We will demonstrate the
	results for different choices of $H(h)$ in the numerical results
	section.	
\end{remark}

\section{Non-overlapping domain decomposition algorithm}\label{sec:Implementation-Mortar}

In this section, we discuss the implementation of the multiscale mortar mixed finite element method
using a non-overlapping domain decomposition method. First, we present a fully discrete version of the system (\ref{eq:monodd-mmmfe1})$\--$(\ref{eq:monodd-mmmfe7})
using backward Euler time discretization. Then we describe the reduction of the algebraic system at each time step to a mortar interface problem, which can be solved using an iterative
solver like GMRES. Finally, we discuss the use of a multiscale basis to increase the efficiency of the method. 

\subsection{Time discretization}

For time discretization, we use the backward Euler method. Let $\{t_{n}\}_{n=0}^{N_{T}}$,
$t_{n}=n\Delta t$, $\Delta t=T/N_{T}$, be a uniform partition of
$(0,T)$. We discretize a related formulation to the system
\eqref{eq:monodd-mmmfe1}--\eqref{eq:monodd-mmmfe7}, in which the constitutive elasticity
equation \eqref{eq:monodd-mmmfe1} is differentiated in time. The reason for this is that this approach results in a positive definite interface problem; details can be found in \cite{dd-biot}.
We introduce the variables $\dot{u}_h=\dt u_h$, $\dot{\g}_h=\dt\g_h$, and $\dot\lambda_{H}^{u}=\dt\lHu$ representing the time derivatives of the displacement, rotation, and displacement-Lagrange multiplier, respectively. In addition, in order to make more clear the incorporation of boundary conditions in the domain decomposition algorithm, we present the method for non-homogeneous Dirichlet boundary conditions
$$
u = g_u \ \ \mbox{on} \ \Gamma^u_D, \quad p = g_p \ \ \mbox{on} \ \Gamma^p_D.
$$
The fully discrete multiscale mortar MFE method reads as follows: for $0\le n\le N_{T}-1$ and $1\le i\le N$, find
$(\sigma_{h,i}^{n+1},\dot{u}_{h,i}^{n+1},\dot{\g}_{h,i}^{n+1},z_{h,i}^{n+1},p_{h,i}^{n+1},
\dot\lambda_{H}^{u,n+1},\lambda_{H}^{p,n+1})\in\X_{h,i}\times V_{h,i}\times\W_{h,i}\times Z_{h,i}\times W_{h,i}\times\Lambda_{H}^u\times \Lambda_H^p$ such that:
\begin{align}
& \inp[A(\sigma_{h,i}^{n+1}+\a p_{h,i}^{n+1}I)]{\tau}_{\Oi}+\Delta t\inp[\dot{u}_{h,i}^{n+1}]{\dvr{\tau}}_{\Oi}+\Delta t\inp[\dot{\gamma}_{h,i}^{n+1}]{\tau}_{\Oi}\nonumber \\
  & \quad\quad= 
  \Delta t\gnp[\lambda_{H}^{\dot{u},n+1}]{\t\,n_{i}}_{\Gamma_{i}}
+ \Delta t\gnp[\dt g_{u}^{n+1}]{\t\,n_{i}}_{\dO_{i}\cap\Gd^{u}}
  +\inp[A(\sigma_{h,i}^{n}+\a p_{h,i}^{n}I)]{\tau}_{\Oi}, && \forall\tau\in\X_{h,i},\label{eq:mortar-time-diff-elast}\\  
& \inp[\dvr{\sigma_{h,i}^{n+1}}]{v}_{\Oi}=-\inp[f^{n+1}]{v}_{\Oi}, &  & \forall v\in V_{h,i},\label{eq:mortar-dd1-mfe2-dsc}\\
& \inp[\sigma_{h,i}^{n+1}]{\xi}_{\Oi}=0, &  & \forall\xi\in\W_{h,i},\label{eq:mortar-dd1-mfe3-dsc}\\
    & \inp[\K z_{h,i}^{n+1}]{\zeta}_{\Oi}-\inp[p_{h,i}^{n+1}]{\dvr{\zeta}}_{\Oi}=
  -\gnp[\lambda_{H}^{p,n+1}]{\zeta\cdot n_i}_{\Gamma_{i}}
-\gnp[g_{p}^{n+1}]{\zeta\cdot n_i}_{\dOi\cap\Gd^{p}}, &  & \forall \zeta\in Z_{h,i},\label{eq:mortar-dd1-mfe4-dsc}\\
    & c_{0}\inp[p_{h,i}^{n+1}]{w}_{\Oi} +\a\inp[A(\sigma_{h,i}^{n+1}+\a p_{h,i}^{n+1}I)]{wI}_{\Oi}
    +\Delta t\inp[\dvr{z_{h,i}^{n+1}}]{w}_{\Oi} \nonumber \\
& \qquad
    = c_{0}\inp[p_{h,i}^{n}]{w}_{\Oi} +\a\inp[A(\sigma_{h,i}^{n}+\a p_{h,i}^{n}I)]{wI}_{\Oi}
    + \Delta t\inp[g^{n+1}]{w}_{\Oi}, &  & \forall w\in W_{h,i},\label{eq:mortar-dd1-mfe5-dsc}\\
& \sum_{i=1}^{N}\gnp[\sigma_{h,i}^{n+1}\,n_{i}]{\mu^{u}}_{\G_{i}}=0, &  & \forall\mu^{u}\in\Lambda_{H}^{u},\label{eq:mortar-dd1-mfe6-dsc}\\
& \sum_{i=1}^{N}\gnp[z_{h,i}^{n+1}\cdot n_{i}]{\mu^{p}}_{\G_{i}}=0, &  & \forall\mu^{p}\in\Lambda_{H}^{p}.\label{eq:mortar-dd1-mfe7-dsc}
\end{align}
The original variables can be recovered using\begin{equation}\label{init-recover} u_h^n = u_h^0 + \Delta t \sum_{k=1}^n \dot{u}_h^k, \quad \gamma_h^n = \gamma_h^0 + \Delta t \sum_{k=1}^n \dot{\gamma}_h^k, \quad \lambda_H^{u,n} = \lambda_H^{u,0} + \Delta t\sum_{k=1}^n \dot \lambda_H^{u,k}. \end{equation}

\subsection{Reduction to an interface problem}

We solve the system resulting from \eqref{eq:mortar-time-diff-elast}--\eqref{eq:mortar-dd1-mfe7-dsc}
at each time step by reducing it to an interface problem for the mortar variables. To simplify the notation, define
$$
\lambda_{H}=\begin{pmatrix}\lambda_{H}^{u}\\
\lambda_{H}^{p}
\end{pmatrix}, \quad
\Lambda_{H}=\begin{pmatrix}\Lambda_{H}^{u}\\
\Lambda_{H}^{p}
\end{pmatrix}.
$$
and let $\lHi$ and $\Lambda_{H,i}$ denote the restrictions of $\lH$
and $\Lambda_{H}$ to $\Gi$, respectively. We introduce two sets of complementary subdomain problems. 

The first set of problems reads as follows: for $1\le i\le N$, find
$(\sb,\ubd,\gbd,\zb,\pb)\in\X_{h,i}\times V_{h,i}\times\W_{h,i}\times Z_{h,i}\times W_{h,i}$
such that 
\begin{align}
& \inp[A(\sb+\a\pb I)]{\tau}_{\Oi}+\Delta t\inp[\ubd]{\dvr{\tau}}_{\Oi}+\Delta t\inp[\gbd]{\tau}_{\Oi}\nonumber \\
  & \quad\quad
  = \Delta t\gnp[\dt g_{u}^{n+1}]{\t\,n_{i}}_{\dO_{i}\cap\Gd^{u}} + 
  \inp[A(\sigma_{h,i}^{n}+\a p_{h,i}^{n}I)]{\tau}_{\Oi}, &  & \forall\tau\in\X_{h,i},\label{eq:mortar-bar-1-1}\\
& \inp[\dvr{\sb}]{v}_{\Oi}=-\inp[f^{n+1}]{v}_{\Oi}, &  & \forall v\in V_{h,i},\label{eq:mortar-bar-2-1}\\
& \inp[\sb]{\xi}_{\Oi}=0, &  & \forall\xi\in\W_{h,i},\label{eq:mortar-bar-3-1}\\
  & \inp[\K\zb]{\zeta}_{\Oi}-\inp[\pb]{\dvr{\zeta}}_{\Oi}= -\gnp[g_{p}^{n+1}]{\zeta\cdot n_i}_{\dOi\cap\Gd^{p}},
  &  & \forall \zeta\in Z_{h,i},\label{eq:mortar-bar-4-1}\\
& c_{0}\inp[\pb]{w}_{\Oi}+\a\inp[A(\sb+\a\pb I)]{wI}_{\Oi}+\Delta t\inp[\dvr{\zb}]{w}_{\Oi}\nonumber \\
& \quad\quad=\Delta t\inp[g^{n+1}]{w}_{\Oi}+c_{0}\inp[p_{h,i}^{n}]{w}_{\Oi}+\a\inp[A(\sigma_{h,i}^{n}+\a p_{h,i}^{n}I)]{wI}_{\Oi}, &  & \forall w\in W_{h,i}.\label{eq:mortar-bar-5-1}
\end{align}
Note that these subdomain problems have zero Dirichlet data on the
subdomain interfaces, the true source terms $f$ and $g$ and outside
boundary conditions $g_{u}$ and $g_{p}$, and previous time step data $\sigma_{h,i}^{n}$
and $p_{h,i}^{n}$.

The second set of equations reads as follows: given $\lambda_{H}\in\Lambda_{H}$,
for $1\le i\le N$, find $(\sss(\lambda_{H,i}),$ $\usd(\lambda_{H,i}),\gsd(\lambda_{H,i}),\zs(\lambda_{H,i}),\ps(\lambda_{H,i})) \in\X_{h,i}\times V_{h,i}\times\W_{h,i}\times Z_{h,i}\times W_{h,i}$
such that:
\begin{align}
& \inp[A\big(\sss(\lambda_{H,i})+\a\ps(\lambda_{H,i})I\big)]{\tau}_{\Oi}+\Delta t\inp[\usd(\lambda_{H,i})]{\dvr{\tau}}_{\Oi}\nonumber \\
& \qquad\quad+\Delta t\inp[\gsd(\lambda_{H,i})]{\tau}_{\Oi}=\Delta t\left\langle \lambda_{H,i}^{\dot{u}},\t\,n_{i}\right\rangle _{\Gamma_{i}}, & \forall\tau\in\X_{h,i},\label{eq:mortar-star-1-1}\\
& \left(\dvr\sss(\lambda_{H,i}),v\right)_{\Oi}=0, & \forall v\in V_{h,i},\label{eq:mortar-star-2-1}\\
& \left(\sss(\lambda_{H,i}),\xi\right)_{\Oi}=0, & \forall\xi\in\W_{h,i},\label{eq:mortar-star-3-1}\\
  & \left(\K\zs(\lambda_{H,i}),\zeta\right)_{\Oi}-\left(\ps(\lambda_{H,i}),\dvr{\zeta}\right)_{\Oi}
  =-\gnp[\lambda_{H,i}^{p}]{\zeta\cdot n_i}_{\Gamma_{i}}, & \forall \zeta\in Z_{h,i},\label{eq:mortar-star-4-1}\\
& c_{0}\left(\ps(\lambda_{H,i}),w\right)+\a\left(A\big(\sss(\lambda_{H,i})+\a\ps(\lambda_{H,i})I\big),wI\right)_{\Oi}\nonumber \\
& \quad\qquad+\Delta t\left(\dvr\zs(\lH)\right)=0, & \forall w\in W_{h,i}.\label{eq:mortar-star-5-1}
\end{align}
Note that these problems have $\lambda_{H,i}$ as Dirichlet
boundary data on the interfaces $\G$, zero source terms, zero
boundary data on the outside boundary $\dO$, and zero data
from the previous time step.

Define the bilinear forms $a_{H,i}^{n+1}:\Lambda_{H,i}\times\Lambda_{H,i}\to\R$, $1\le i\le N$,
$a_{H}^{n+1}:\LH\times\LH\to\R$, and the linear functional $g_{H}^{n+1}:\LH\to\R$
for all $0\le n\le N_{T}-1$ by
\begin{gather*} a_{H,i}^{n+1}(\lambda_{H,i},\mu_i)=\gnp[\sss(\lambda_{H,i})\,n_{i}]{\mu_i^{u}}_{\Gamma_{i}}-\gnp[\zs(\lambda_{H,i})\cdot n_{i}]{\mu_i^{p}}_{\Gamma_{i}},\quad a_{H}^{n+1}(\lambda_{H},\mu)=\sum_{i=1}^{N}a_{H,i}^{n+1}(\lambda_{H,i},\mu_i),\\
g_{H}^{n+1}(\mu)=\sum_{i=1}^{N}\left(-\gnp[\sb\,n_{i}]{\m^{u}}_{\Gamma_{i}}+\gnp[\zb\cdot n_{i}]{\m^{p}}_{\Gamma_{i}}\right).
\end{gather*}
It follows from (\ref{eq:mortar-dd1-mfe6-dsc})$\--$(\ref{eq:mortar-dd1-mfe7-dsc})
that the solution to the global problem
(\ref{eq:mortar-time-diff-elast})--(\ref{eq:mortar-dd1-mfe7-dsc})
is equivalent to solving the following interface problem for $\lambda_{H}^{n+1}\in\Lambda_{H}$:
\begin{equation}
a_{H}^{n+1}(\lambda_{H}^{n+1},\mu)=g_{H}^{n+1}(\m),\quad\forall\m\in\Lambda_{H},\label{eq:mortar-int-problm-1}
\end{equation}
and setting 
\begin{align*}
& \s_{h,i}^{n+1}=\sss(\lambda_{H}^{n+1})+\sb,\quad\dot{u}_{h,i}^{n+1}=\usd(\lambda_{H}^{n+1})+\ubd,\quad\dot{\g}_{h,i}^{n+1}=\gsd(\lambda_{H}^{n+1})+\gbd,\\
& z_{h,i}^{n+1}=\zs(\lambda_{H}^{n+1})+\zb,\quad p_{h,i}^{n+1}=\ps(\lambda_{H}^{n+1})+\pb.
\end{align*}

\subsection{Solution of the interface problem}

We introduce the linear operators $\AHi:\Lambda_{H,i}\to\Lambda_{H,i}'$, for $1\le i\le N$, and $\AH:\Lambda_{H}\to\Lambda_{H}'$ such that
for any $\lH\in\LH$,
$$
\langle\AHi\lHi,\,\mu_i\rangle = a_{H,i}^{n+1}(\lambda_{H,i},\mu_i) \quad \forall\mu_i\in\Lambda_{H,i}, \qquad
\langle\AH\lH,\mu\rangle = \sumsubd\langle\AHi\lHi,\mu_i\rangle \quad \forall\mu\in\Lambda_{H}.
$$
We also define the functional $\GH\in\LH'$ such that
\begin{equation*}
  \gnp[\GH]{\mu} = g_{H}^{n+1}(\mu)
  \qquad\forall\mu\in\Lambda_{H,i}.
\end{equation*}
The interface problem (\ref{eq:mortar-int-problm-1}) can now be reformulated
as finding $\lH^{n+1}\in\LH$ such that
\begin{equation}
\AH\lH^{n+1}=\GH.\label{eq:mortar-Matrix-int-problm}
\end{equation}
Consider the $L^{2}$-orthogonal projections $\mathcal{Q}_{h,i}^{u,T}:\X_{h,i}n_{i}\to\Lambda_{H}^{u}$
and $\mathcal{Q}_{h,i}^{p,T}:Z_{h,i}\cdot n_{i}:\to\Lambda_{H}^{p}$, which are the adjoint operators of $\mathcal{Q}_{h,i}^{u}$ and $\mathcal{Q}_{h,i}^{p}$, respectively, introduced in \eqref{eq:motor-project-1}--\eqref{eq:motor-project-2}.
Using this notation, we have
\begin{align}
& \AHi\lHi=\begin{pmatrix}\mathcal{Q}_{h,i}^{u,T}\sss(\lambda_{H,i})\,n_{i}\\
-\mathcal{Q}_{h,i}^{p,T}\zs(\lambda_{H,i})\cdot n_{i}
  \end{pmatrix}, \quad i=1,\ldots N.
\label{eq:Matrix-projection-def}
\end{align}

It is shown in \cite[Lemma~3.1]{dd-biot} that in the case of matching grids the interface bilinear form $a_H^{n+1}(\cdot,\cdot)$ is positive definite. The proof can be easily extended to the current setting using mortar variable. Consequently, we use GMRES to solve the interface problem (\ref{eq:mortar-Matrix-int-problm}). The action of the interface operator $\AH$ required at each GMRES iteration is computed using the steps described in Algorithm \ref{alg:Solving-interface-problem}.
\\
\\
\begin{algorithm}[h]
  \caption{\label{alg:Solving-interface-problem}
Computation of $\AH\lH$ at each GMRES iteration.}
\begin{enumerate}
\item Project the mortar data $\lH$ onto the subdomain boundary spaces: $\lambda_{H,i}^u\rightarrow\mathcal{Q}_{h,i}^u\lambda_{H,i}^u$,
$\lambda_{H,i}^p\rightarrow\mathcal{Q}_{h,i}^p\lambda_{H,i}^p$.
\item Solve the second set of subdomain problems (\ref{eq:mortar-star-1-1})$\--$(\ref{eq:mortar-star-5-1}) using the projected functions $\mathcal{Q}_{h,i}^u\lambda_{H,i}^u,\mathcal{Q}_{h,i}^p\lambda_{H,i}^p$ 
as Dirichlet boundary data on $\Gamma_i$ to obtain $\sss(\lambda_{H,i})$ and $\zs(\lambda_{H,i})$.
\item Project the subdomain solutions to the mortar space: $\sss(\lambda_{H,i})\,n_{i}\rightarrow\mathcal{Q}_{h,i}^{u,T}\sss(\lambda_{H,i})\,n_{i}$
and $\zs(\lambda_{H,i})\cdot n_{i}\longrightarrow\mathcal{Q}_{h,i}^{p,T}\zs(\lambda_{H,i})\cdot n_{i}$.
\item Compute the action $\AH\lambda_{H}$ using (\ref{eq:Matrix-projection-def}).
\end{enumerate}
\end{algorithm}

\begin{remark}\label{rem:DD-cost}
The solution algorithm for the multiscale mortar MFE method has the performance advantage over the similar method for matching grids discussed in \cite{dd-biot} that a coarse mortar mesh could be used to obtain a smaller interface problem due to the reduction in the mortar degrees of freedom. Moreover, as discussed in Theorem~\ref{thm:Error-theorem} and Remark~\ref{rem:err-order}, optimal order accuracy on the fine scale can be maintained with a suitable choice of the mortar space polynomial degree.
\end{remark}

\subsection{Implementation with multiscale stress--flux basis} \label{subsec:Implementation-of-multiscale}

As noted in Remark~\ref{rem:DD-cost}, a coarser mortar mesh can lead to a smaller interface problem, but even in that case the number of subdomain solves of the type (\ref{eq:mortar-star-1-1})$\--$(\ref{eq:mortar-star-5-1})
is directly proportional to both the number of time steps and the number of GMRES iterations at each time step. Following \cite{ganis2009implementation,eldar_elastdd}, we propose the construction
and use of a multiscale stress--flux basis (MSB), which makes the number
of subdomain solves independent of the number of GMRES iterations required
for the interface problem and the number of time steps.

Let $\left\{ \BHi\right\} _{k=0}^{N_{H,i}}$ be a basis for $\LHi$,
where $N_{H,i}$ denotes the number of degrees of freedom associated
with the finite element space $\LHi$. We calculate and store the
action of the interface operator of the form 
\begin{align}
& \mathcal{A}_{H,i}\BHi=\mathcal{Q}_{h,i}^{T}\begin{pmatrix}\s_{h,i}^{*}(\BHi)\,n_{i}\\
-z_{h,i}^{*}(\BHi)\cdot n_{i}
\end{pmatrix}, \quad k=1,\ldots N_{H,i}, \label{eq:int-op-matrix-1-generic}
\end{align}
where $\s_{h,i}^{*}(\BHi)$ and $z_{h,i}^{*}(\BHi)$ are obtained by solving (\ref{eq:mortar-star-1-1})$\--$(\ref{eq:mortar-star-5-1})
with $\BHi$ as the Dirichlet boundary data. A detailed description
of the construction of the multiscale basis elements $\left\{ \PHi\right\} _{k=0}^{N_{H}}$,
where $\PHi=\mathcal{A}_{H,i}\BHi$ is given in Algorithm \ref{alg:Construction-of-multiscale};
see \cite{ganis2009implementation,eldar_elastdd} for similar
constructions. We use the notation $Q_{h,i} = \begin{pmatrix}Q_{h,i}^u\\Q_{h,i}^p\end{pmatrix}$.

\begin{algorithm}[h]
  \caption{\label{alg:Construction-of-multiscale}Construction of a multiscale stress--flux basis
$\PHi = \mathcal{A}_{H,i}\BHi$
  }
	
\textbf{for} $k=1,\ldots,N_{H,i}$: 
\begin{enumerate}
\item Project $\BHi$ onto the subdomain boundary space: $\BHi\rightarrow\mathcal{Q}_{h,i}\BHi$.
\item Solve the system (\ref{eq:mortar-star-1-1})$\--$(\ref{eq:mortar-star-5-1})
using the projected function $\mathcal{Q}_{h,i}\BHi$
as Dirichlet boundary data, to obtain $\s_{h,i}^{*}(\BHi)$ and $z_{h,i}^{*}(\BHi)$. 
		\item Project the solution variables to the mortar space to obtain $\PHi=\begin{pmatrix}\mathcal{Q}_{h,i}^{u,T}\s_{h,i}^{*}(\BHi)\,n_{i}\\
		-\mathcal{Q}_{h,i}^{p,T}z_{h,i}^{*}(\BHi)\cdot n_{i}
		\end{pmatrix}$.
	\end{enumerate}
	\textbf{end for}
\end{algorithm}

For any $\lambda_{H,i} \in\LHi$, consider the mortar basis decomposition, $\lambda_{H,i}=\sum_{k=0}^{N_{H,i}}\lambda_{k,i}\BHi$. Using the multiscale basis, Algorithm~\ref{alg:Solving-interface-problem} for computing the action
of the interface operator $\AH\lH$ can be replaced by computing a linear combination of the multiscale basis as follows:
\begin{equation}
  \mathcal{A}_{H,i}\lambda_{H,i}
  =\sum_{k=0}^{N_{H,i}}\lambda_{k,i}\mathcal{A}_{H,i}\BHi
  =\sum_{k=0}^{N_{H,i}}\lambda_{k,i}\PHi.
  \label{eq:multi-basis-use}
\end{equation}

\begin{remark}
The multiscale stress--flux basis is computed and saved once and can be reused over
all time steps and all GMRES iterations, which gains a significant performance advantage in
the case of time-dependent problems like the one we consider. We illustrate the efficiency of using the multiscale stress--flux basis in Example 2 in the numerical
section.
\end{remark}

\section{Numerical Results\label{sec:Numerical-results-mortar}}
In this section, we report the results of several numerical tests
designed to illustrate the well-posedness, stability, and
convergence of the multiscale mortar non-overlapping domain decomposition
method for the Biot system of poroelasticity that we have developed.
We further discuss the computational efficiency of the method, including
the advantage of using a multiscale basis. The numerical schemes are implemented using the finite element package deal.II \cite{dealII90,BangerthHartmannKanschat2007}.

We use the finite element triplet $\X_{h}\times V_{h}\times\W_{h}=(\mathcal{BDM}_{1})^{2}\times (Q_{0})^{2}\times Q_{0}$
\cite{Awanou-rect-weak,ArnAwaQiu} for elasticity and the finite element pair $Z_{h}\times W_{h}=\mathcal{BDM}_{1}\times Q_{0}$ 
\cite{brezzi1991mixed} for Darcy on rectangular meshes. Here
$\mathcal{BDM}_{1}$ stands for the lowest-order Brezzi-Douglas-Marini space \cite{brezzi1991mixed} and $Q_{k}$ denotes polynomials of degree $k$ in each variable. For
the mortar spaces, $\Lambda_H^u$ is taken to be $(DQ_{m})^{2}$, and $\Lambda_H^p$
is taken to be $DQ_{m}$ with $m=1\text{ or }2$, where $DQ_{k}$ represents the discontinuous
finite element spaces containing polynomials of degree $k$, which
lives on the subdomain interfaces. The polynomial degrees of the finite element spaces used in the numerical examples are given in Table~\ref{tab:Degree-poly-numerics}. For solving
the interface problem, we use non-restarted unpreconditioned GMRES
with a tolerance $10^{-6}$ on the relative residual $\frac{r_{k}}{r_{0}}$ as
the stopping criteria.

\begin{table}[h]
	
	\caption{\label{tab:Degree-poly-numerics}Degree of polynomials associated
		with FEM spaces used for numerical experiments.}
	
\begin{centering}
\begin{tabular}{|c|c|c|c|c|c|}
\hline 
$\X_{h}:k$ & $V_{h}:l$ & $\W_{h}:j$ & $Z_{h}:r$ & $W_{h}:s$ & $\Lambda_{H}:m$
\tabularnewline
\hline 
1 & 0 & 0 & 1 & 0 & 1 or 2\tabularnewline
\hline 
\end{tabular}
\par\end{centering}
\end{table}

\begin{figure}[h]
  \begin{minipage}{0.5\textwidth}
 	\begin{centering}   
\begin{tabular}{c|c} \hline Parameter & Value
			\tabularnewline \hline Permeability tensor $(K)$ & $I$
			\tabularnewline Lame coefficient $(\mu)$ & $100.0$
			\tabularnewline Lame coefficient $(\lambda)$ & $100.0$
			\tabularnewline Mass storativity $(c_{0})$ & $1.0, 10^{-3}$
			\tabularnewline Biot-Willis constant $(\alpha)$ & $1.0$
			\tabularnewline Time step $(\Delta t)$ & $10^{-3}, 10^{-4}$
			\tabularnewline Number of time steps & $100$
			\tabularnewline 
			\hline 
		\end{tabular}          
\par\end{centering}
\end{minipage}
  \begin{minipage}{0.5\textwidth}
	\begin{centering}
	  \includegraphics[width=.8\columnwidth]{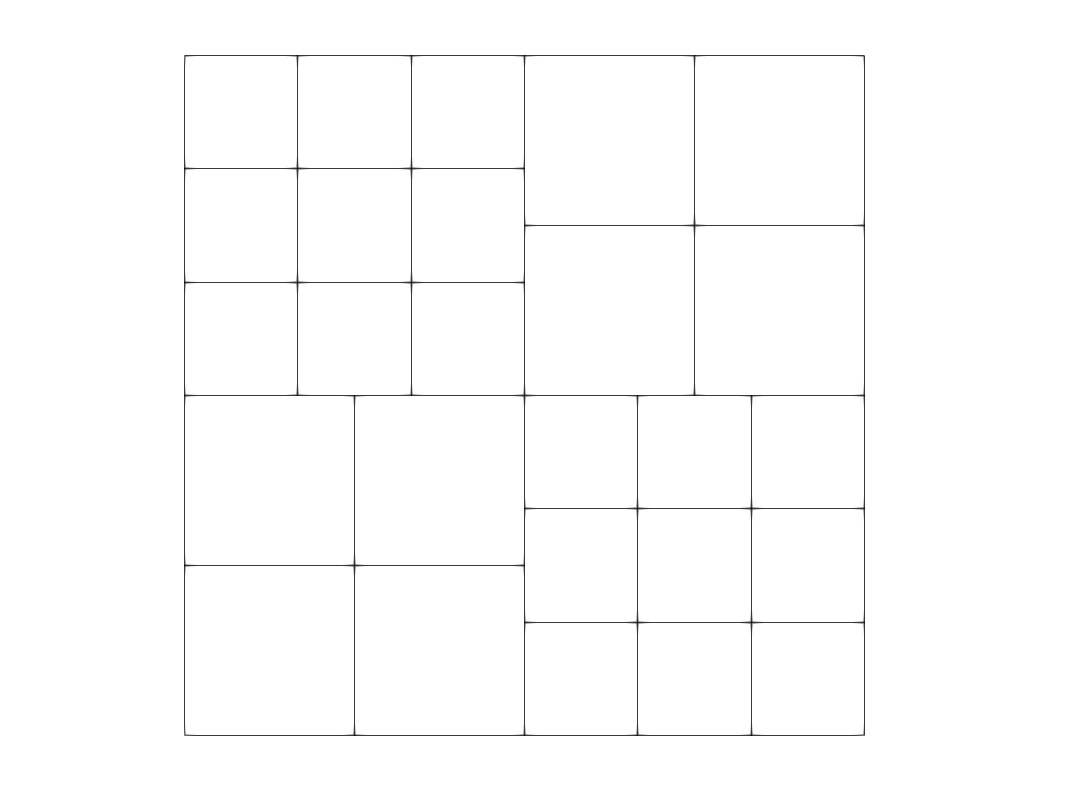}
          \end{centering}
            \end{minipage}
  \caption{\label{fig:multi-grid-mesh}Example 1, left: physical and numerical parameters;
right: coarsest non-matching subdomain grids.}	
\end{figure}

In Example 1, we test the stability, convergence, and
efficiency of the multiscale mortar MFE method using linear ($m=1$)
or quadratic ($m=2$) mortar spaces by solving the problem
with a known solution on successively refined meshes. In Example 2, we apply the multiscale mortar MFE method to solve a benchmark problem with a highly heterogeneous medium. We compare the efficiency of the multiscale versus fine scale methods and study the computational
advantage of constructing the multiscale stress--flux basis (MSB) discussed
in Section~\ref{subsec:Implementation-of-multiscale}.

\subsection{Example 1: convergence rates}

In this example we test the solvability, stability, and convergence
of the multiscale mortar MFE method. The global computational domain $\Omg$ is taken
to be the unit square $(0,1)^{2}$. We consider the following analytical
solution 
\[
p=\exp(t)(\sin(\pi x)\cos(\pi y)+10),\quad u=\exp(t)\begin{pmatrix}x^{3}y^{4}+x^{2}+\sin((1-x)(1-y))\cos(1-y)\\
(1-x)^{4}(1-y)^{3}+(1-y)^{2}+\cos(xy)\sin(x)
\end{pmatrix}.
\]
The physical and numerical parameters are given in Figure~\ref{fig:multi-grid-mesh} (left).
Using this information, we derive the right hand side and initial and boundary
conditions. We partition $\Omg$ into four square subdomains using a checkerboard 
global mesh with non-matching grids on all subdomain interfaces.
In particular, the coarsest mesh has subdomain mesh-sizes $\frac{1}{4}:\frac{1}{6}:\frac{1}{6}:\frac{1}{4}$
as shown in Figure~\ref{fig:multi-grid-mesh} (right). The corresponding coarsest
mortar interface mesh consists of two elements with mesh size $\frac{1}{2}$.

We consider two different cases, with linear $(m=1)$ or quadratic $(m=2)$ mortar
spaces. To test the convergence, we successively
refine the subdomain and mortar meshes. In the linear mortar case,
we maintain a subdomain to mortar mesh ratio $H=2h$, and
in the quadratic mortar case, we maintain the ratio $H=\sqrt{h}$.
The convergence tables for the cases with linear and quadratic mortar
spaces with $\Delta t=10^{-4}$ and $c_{0}=1.0$ are given in Tables
\ref{tab:Example-1-lin-conv} and \ref{tab:Example-1-quad-conv},
respectively. Tables~\ref{tab:Example-1-lin-conv_c0_small} and \ref{tab:Example-1-quad-conv-small-c0}
present the convergence table in the case of linear mortar and quadratic
mortar spaces, respectively with $\Delta t=10^{-4}$ and $c_{0}=10^{-3}$.
We present the number of interface iterations, relative errors, and
convergence rates. Solution plots in the case
of linear mortar with an intermediate level of refinement, $h=1/32$,
and $c_{0}=1.0$ are shown in Figure~\ref{fig:Example-1,-computed_exact_mortar}.
The plots in the case of quadratic mortar space look similar. The plots demonstrate the efficacy of the method in enforcing continuity of the solution variables across non-matching subdomain interfaces, using weakly coarse mortar spaces.

\renewcommand{\tabcolsep}{4pt}
\begin{table}[h]
	\captionsetup{justification=centering}
	\caption{\label{tab:Example-1-lin-conv}Example 1, convergence for
		linear mortar ($m=1$) with $H=2h$, $\Delta t=10^{-4}$ and $c_{0}=1.0$.}
	
	\begin{centering}
		\begin{tabular}{|c|c|c|c|c|c|c|c|c|c|c|}
			\hline 
			$h$ & \multicolumn{2}{c|}{\# GMRES} & \multicolumn{2}{c|}{$\|\sigma-\sigma_{h}\|_{L^{\infty}(L^{2})}$} & \multicolumn{2}{c|}{$\|\dvr\,(\sigma-\sigma_{h})\|_{L^{\infty}(L^{2})}$} & \multicolumn{2}{c|}{$\|\gamma-\gamma_{h}\|_{L^{\infty}(L^{2})}$} & \multicolumn{2}{c|}{$\|u-u_{h}\|_{L^{\infty}(L^{2})}$}\tabularnewline
			\hline 
			$1/4$ & 16  & rate & 1.23e-01  & rate  & 6.09e-01  & rate  & 1.39e+00  & rate  & 5.78e-01  & rate \tabularnewline
			\cline{3-3} \cline{5-5} \cline{7-7} \cline{9-9} \cline{11-11} 
			$1/8$ & 28  & -0.81  & 3.24e-02  & 1.92  & 3.11e-01  & 0.97  & 7.07e-01  & 0.97  & 2.92e-01  & 0.99 \tabularnewline
			$1/16$ & 46  & -0.72  & 8.20e-03  & 1.98  & 1.56e-01  & 0.99  & 3.55e-01  & 0.99  & 1.46e-01  & 1.00 \tabularnewline
			$1/32$ & 73  & -0.67  & 2.08e-03  & 1.98  & 7.82e-02  & 1.00  & 1.78e-01  & 1.00  & 7.31e-02  & 1.00 \tabularnewline
			$1/64$ & 122  & -0.74  & 5.39e-04  & 1.94  & 3.91e-02  & 1.00  & 8.89e-02  & 1.00  & 3.65e-02  & 1.00 \tabularnewline
			\hline 
		\end{tabular}
		\par\end{centering}
	\centering{}%
	\begin{tabular}{|c|c|c|c|c|c|c|c|c|c|c|}
		\hline 
		$h$ & \multicolumn{2}{c|}{$\|z-z_{h}\|_{L^{\infty}(L^{2})}$} & \multicolumn{2}{c|}{$\|\dvr\,(z-z_{h})\|_{L^{2}(L^{2})}$} & \multicolumn{2}{c|}{$\|p-p_{h}\|_{L^{\infty}(L^{2})}$} & \multicolumn{2}{c|}{$\|u-\lambda^{u}{}_{H}\|_{L^{\infty}(L^{2})}$} & \multicolumn{2}{c|}{$\|p-\lambda^{p}{}_{H}\|_{L^{\infty}(L^{2})}$}\tabularnewline
		\hline 
		$1/4$ & 1.04e+00  & rate  & 4.15e-01  & rate  & 5.91e-02  & rate  & 7.50e-01  & rate  & 2.06e-01  & rate \tabularnewline
		\cline{3-3} \cline{5-5} \cline{7-7} \cline{9-9} \cline{11-11} 
		$1/8$ & 3.72e-01  & 1.48  & 1.89e-01  & 1.14  & 2.96e-02  & 1.00  & 1.90e-01  & 1.98  & 5.30e-02  & 1.96\tabularnewline
		$1/16$ & 1.19e-01  & 1.64  & 8.50e-02  & 1.15  & 1.48e-02  & 1.00  & 4.76e-02  & 1.99  & 1.33e-02  & 2.00\tabularnewline
		$1/32$ & 3.56e-02  & 1.74  & 3.97e-02  & 1.10  & 7.39e-03  & 1.00  & 1.19e-02  & 2.00  & 3.33e-03  & 2.00\tabularnewline
		$1/64$ & 1.08e-02  & 1.72  & 1.92e-02  & 1.05  & 3.70e-03  & 1.00  & 3.04e-03  & 1.97  & 8.37e-04  & 1.99\tabularnewline
		\hline 
	\end{tabular}
\end{table}

\begin{table}[h]
	\captionsetup{justification=centering}
	\caption{\label{tab:Example-1-quad-conv}Example 1, convergence for
		quadratic mortar ($m=2$) with $H=\sqrt{h}$, $\Delta t=10^{-4}$
		and $c_{0}=1.0$.}
	
	\begin{centering}
		\begin{tabular}{|c|c|c|c|c|c|c|c|c|c|c|}
			\hline 
			$h$ & \multicolumn{2}{c|}{\# GMRES} & \multicolumn{2}{c|}{$\|\sigma-\sigma_{h}\|_{L^{\infty}(L^{2})}$} & \multicolumn{2}{c|}{$\|\dvr\,(\sigma-\sigma_{h})\|_{L^{\infty}(L^{2})}$} & \multicolumn{2}{c|}{$\|\gamma-\gamma_{h}\|_{L^{\infty}(L^{2})}$} & \multicolumn{2}{c|}{$\|u-u_{h}\|_{L^{\infty}(L^{2})}$}\tabularnewline
			\hline 
			$1/4$ & 22  & rate & 1.26e-01  & rate & 6.09e-01  & rate & 1.39e+00  & rate & 5.79e-01  & rate\tabularnewline
			\cline{3-3} \cline{5-5} \cline{7-7} \cline{11-11} 
			$1/16$ & 40  & -0.43  & 8.25e-03  & 1.97 & 1.56e-01  & 0.98 & 3.55e-01  & 0.99  & 1.46e-01  & 0.99 \tabularnewline
			$1/64$ & 65  & -0.35  & 5.62e-04  & 1.93 & 3.91e-02  & 1.00  & 8.89e-02  & 1.00  & 3.65e-02  & 1.00 \tabularnewline
			\hline 
		\end{tabular}
		\par\end{centering}
	\centering{}%
	\begin{tabular}{|c|c|c|c|c|c|c|c|c|c|c|}
		\hline 
		$h$ & \multicolumn{2}{c|}{$\|z-z_{h}\|_{L^{\infty}(L^{2})}$} & \multicolumn{2}{c|}{$\|\dvr\,(z-z_{h})\|_{L^{2}(L^{2})}$} & \multicolumn{2}{c|}{$\|p-p_{h}\|_{L^{\infty}(L^{2})}$} & \multicolumn{2}{c|}{$\|u-\lambda^{u}{}_{H}\|_{L^{\infty}(L^{2})}$} & \multicolumn{2}{c|}{$\|p-\lambda^{p}{}_{H}\|_{L^{\infty}(L^{2})}$}\tabularnewline
		\hline 
		$1/4$ & 6.72e-01  & rate & 3.92e-01  & rate & 5.92e-02  & rate & 7.55e-01  & rate & 9.70e-02  & rate\tabularnewline
		\cline{3-3} \cline{5-5} \cline{7-7} \cline{9-9} \cline{11-11} 
		$1/16$ & 8.20e-02  & 1.52  & 8.36e-02  & 1.11 & 1.48e-02  & 1.00  & 4.82e-02  & 1.99  & 6.83e-03  & 1.91\tabularnewline
		$1/64$ & 7.03e-03  & 1.77  & 1.92e-02  & 1.06 & 3.70e-03  & 1.00  & 3.31e-03  & 1.93 & 5.91e-04  & 1.77\tabularnewline
		\hline 
	\end{tabular}
\end{table}

\begin{table}[h]
	\captionsetup{justification=centering}
	\caption{\label{tab:Example-1-lin-conv_c0_small}Example 1, convergence
		for linear mortar with $H=2h$, $\Delta t=10^{-4}$ and $c_{0}=10^{-3}$.}
	
	\begin{centering}
		\begin{tabular}{|c|c|c|c|c|c|c|c|c|c|c|}
			\hline 
			$h$ & \multicolumn{2}{c|}{\# GMRES} & \multicolumn{2}{c|}{$\|\sigma-\sigma_{h}\|_{L^{\infty}(L^{2})}$} & \multicolumn{2}{c|}{$\|\dvr\,(\sigma-\sigma_{h})\|_{L^{\infty}(L^{2})}$} & \multicolumn{2}{c|}{$\|\gamma-\gamma_{h}\|_{L^{\infty}(L^{2})}$} & \multicolumn{2}{c|}{$\|u-u_{h}\|_{L^{\infty}(L^{2})}$}\tabularnewline
			\hline 
			$1/4$ & 16  & rate & 1.25e-01  & rate & 6.09e-01  & rate & 1.39e+00  & rate & 5.78e-01  & rate\tabularnewline
			\cline{3-3} \cline{5-5} \cline{7-7} \cline{9-9} \cline{11-11} 
			$1/8$ & 29  & -0.86  & 3.30e-02  & 1.92  & 3.11e-01  & 0.97  & 7.07e-01  & 0.97  & 2.92e-01  & 0.99 \tabularnewline
			$1/16$ & 50  & -0.79  & 8.34e-03  & 1.98  & 1.56e-01  & 0.99  & 3.55e-01  & 0.99  & 1.46e-01  & 1.00 \tabularnewline
			$1/32$ & 87  & -0.80  & 2.09e-03  & 1.99  & 7.82e-02  & 1.00  & 1.78e-01  & 1.00  & 7.31e-02  & 1.00 \tabularnewline
			$1/64$ & 157  & -0.85  & 5.38e-04  & 1.96  & 3.91e-02  & 1.00  & 8.89e-02  & 1.00  & 3.65e-02  & 1.00 \tabularnewline
			\hline 
		\end{tabular}
		\par\end{centering}
	\centering{}%
	\begin{tabular}{|c|c|c|c|c|c|c|c|c|c|c|}
		\hline 
		$h$ & \multicolumn{2}{c|}{$\|z-z_{h}\|_{L^{\infty}(L^{2})}$} & \multicolumn{2}{c|}{$\|\dvr\,(z-z_{h})\|_{L^{2}(L^{2})}$} & \multicolumn{2}{c|}{$\|p-p_{h}\|_{L^{\infty}(L^{2})}$} & \multicolumn{2}{c|}{$\|u-\lambda^{u}{}_{H}\|_{L^{\infty}(L^{2})}$} & \multicolumn{2}{c|}{$\|p-\lambda^{p}{}_{H}\|_{L^{\infty}(L^{2})}$}\tabularnewline
		\hline 
		$1/4$ & 4.18e+01  & rate & 2.31e+00  & rate & 8.81e-01  & rate & 7.52e-01  & rate & 8.48e+00  & rate\tabularnewline
		\cline{3-3} \cline{5-5} \cline{7-7} \cline{9-9} \cline{11-11} 
		$1/8$ & 9.68e+00  & 2.11  & 7.14e-01  & 1.69  & 2.33e-01  & 1.92  & 1.90e-01  & 1.98  & 2.11e+00  & 2.00\tabularnewline
		$1/16$ & 2.31e+00  & 2.07  & 2.00e-01  & 1.84  & 5.93e-02  & 1.98  & 4.77e-02  & 1.99  & 5.08e-01  & 2.06\tabularnewline
		$1/32$ & 5.68e-01  & 2.02  & 6.02e-02  & 1.73  & 1.62e-02  & 1.87  & 1.19e-02  & 2.00  & 1.25e-01  & 2.02\tabularnewline
		$1/64$ & 1.42e-01  & 2.00  & 2.22e-02  & 1.44  & 5.22e-03  & 1.64  & 2.98e-03  & 2.00  & 3.12e-02  & 2.00\tabularnewline
		\hline 
	\end{tabular}
\end{table}

\begin{table}[h]
	\captionsetup{justification=centering}
	\caption{\label{tab:Example-1-quad-conv-small-c0}Example 1, convergence
		for quadratic mortar with $H=\sqrt{h}$, $\Delta t=10^{-4}$ and
		$c_{0}=10^{-3}$.}
	
	\begin{centering}
		\begin{tabular}{|c|c|c|c|c|c|c|c|c|c|c|}
			\hline 
			$h$ & \multicolumn{2}{c|}{\# GMRES} & \multicolumn{2}{c|}{$\|\sigma-\sigma_{h}\|_{L^{\infty}(L^{2})}$} & \multicolumn{2}{c|}{$\|\dvr\,(\sigma-\sigma_{h})\|_{L^{\infty}(L^{2})}$} & \multicolumn{2}{c|}{$\|\gamma-\gamma_{h}\|_{L^{\infty}(L^{2})}$} & \multicolumn{2}{c|}{$\|u-u_{h}\|_{L^{\infty}(L^{2})}$}\tabularnewline
			\hline 
			$1/4$ & 23  & rate & 1.28e-01  & rate  & 6.09e-01  & rate  & 1.39e+00  & rate  & 5.79e-01  & rate \tabularnewline
			\cline{3-3} \cline{5-5} \cline{7-7} \cline{11-11} 
			$1/16$ & 41  & -0.41  & 8.39e-03  & 1.97  & 1.56e-01  & 0.98 & 3.55e-01  & 0.96 & 1.46e-01  & 0.99\tabularnewline
			$1/64$ & 72  & -0.41  & 5.61e-04  & 1.95  & 3.91e-02  & 1.00  & 8.89e-02  & 1.00  & 3.65e-02  & 1.00 \tabularnewline
			\hline 
		\end{tabular}
		\par\end{centering}
	\centering{}%
	\begin{tabular}{|c|c|c|c|c|c|c|c|c|c|c|}
		\hline 
		$h$ & \multicolumn{2}{c|}{$\|z-z_{h}\|_{L^{\infty}(L^{2})}$} & \multicolumn{2}{c|}{$\|\dvr\,(z-z_{h})\|_{L^{2}(L^{2})}$} & \multicolumn{2}{c|}{$\|p-p_{h}\|_{L^{\infty}(L^{2})}$} & \multicolumn{2}{c|}{$\|u-\lambda^{u}{}_{H}\|_{L^{\infty}(L^{2})}$} & \multicolumn{2}{c|}{$\|p-\lambda^{p}{}_{H}\|_{L^{\infty}(L^{2})}$}\tabularnewline
		\hline 
		$1/4$ & 4.24e+01  & rate & 2.42e+00  & rate  & 9.97e-01  & rate  & 7.57e-01  & rate  & 1.07e+01  & rate\tabularnewline
		\cline{3-3} \cline{5-5} \cline{7-7} \cline{9-9} \cline{11-11} 
		$1/16$ & 2.33e+00  & 2.01  & 2.01e-01  & 1.79 & 6.01e-02  & 2.06 & 4.83e-02  & 1.98 & 5.17e-01  & 2.19\tabularnewline
		$1/64$ & 1.50e-01  & 1.97  & 2.25e-02  & 1.58 & 5.40e-03  & 1.74 & 3.26e-03  & 1.95 & 3.38e-02  & 1.97\tabularnewline
		\hline 
	\end{tabular}
\end{table}

\begin{figure}[h]
  \includegraphics[width=0.33\columnwidth]{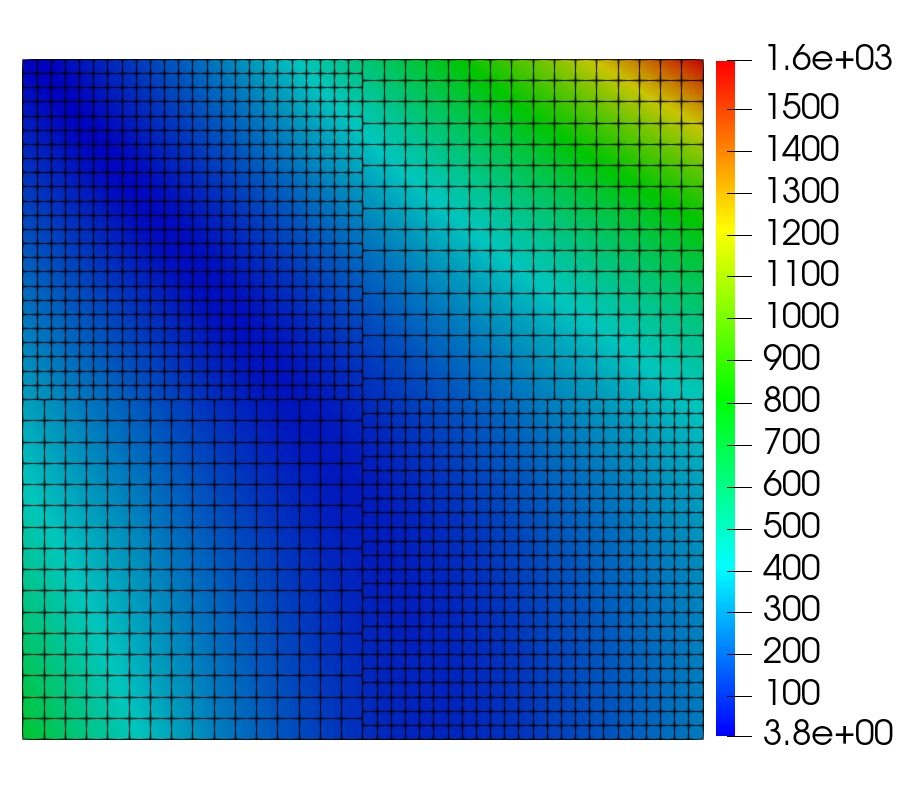}
  \hspace{-.3cm}
	\includegraphics[width=0.33\columnwidth]{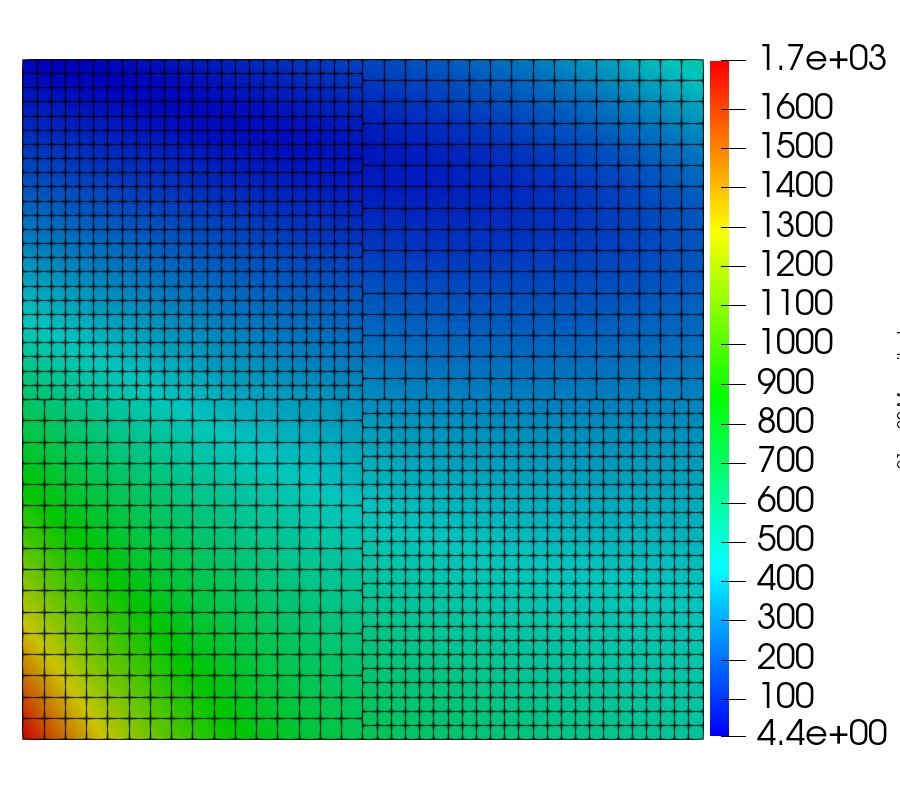}
        \hspace{-.3cm}
	\includegraphics[width=0.33\columnwidth]{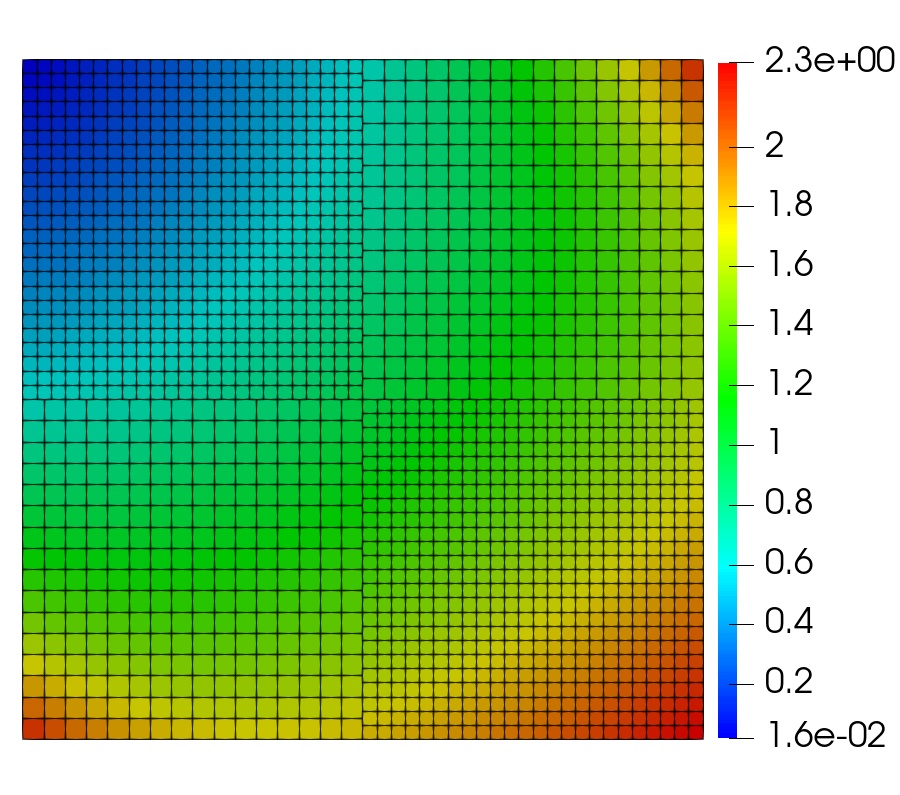}
	
	\includegraphics[width=0.33\columnwidth]{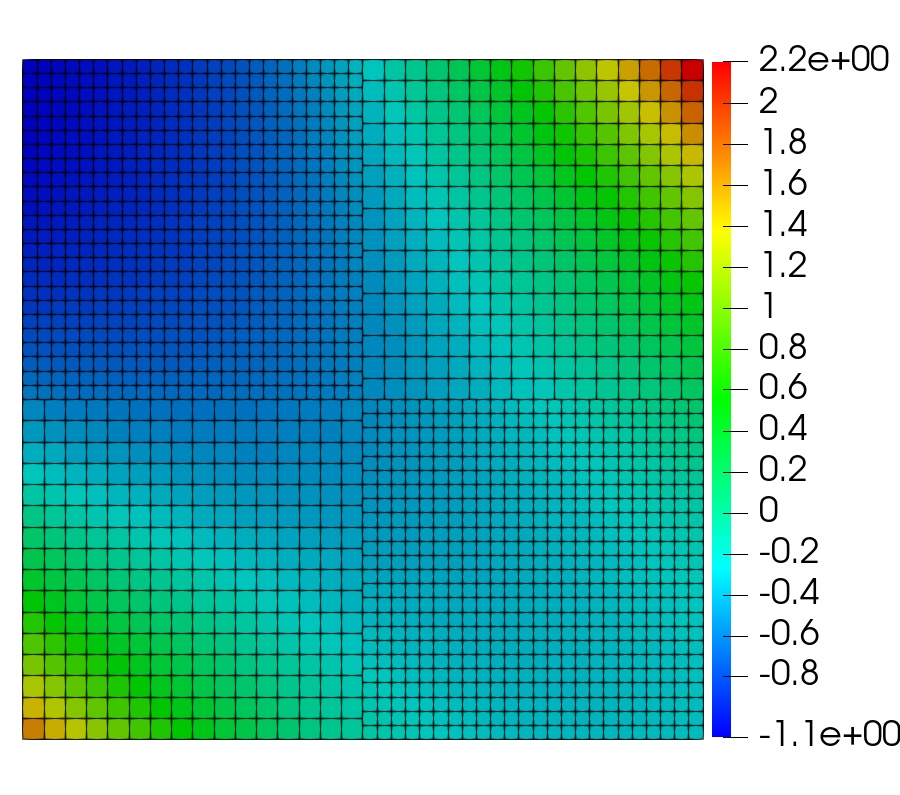}
        \hspace{-.3cm}
	\includegraphics[width=0.33\columnwidth]{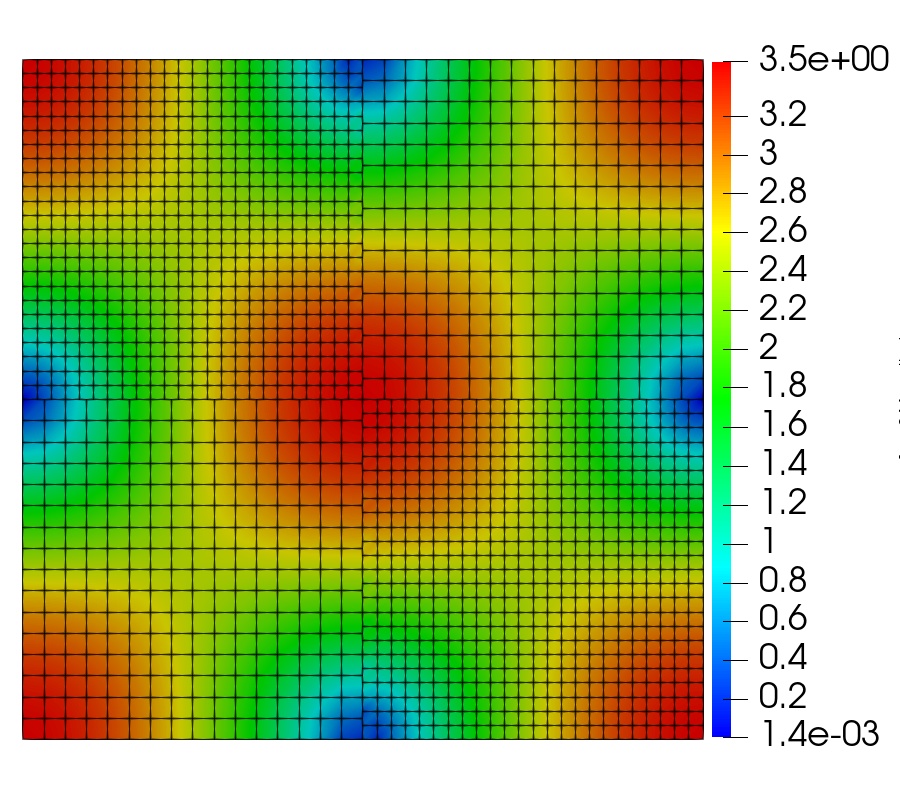}
        \hspace{-.3cm}
	\includegraphics[width=0.33\columnwidth]{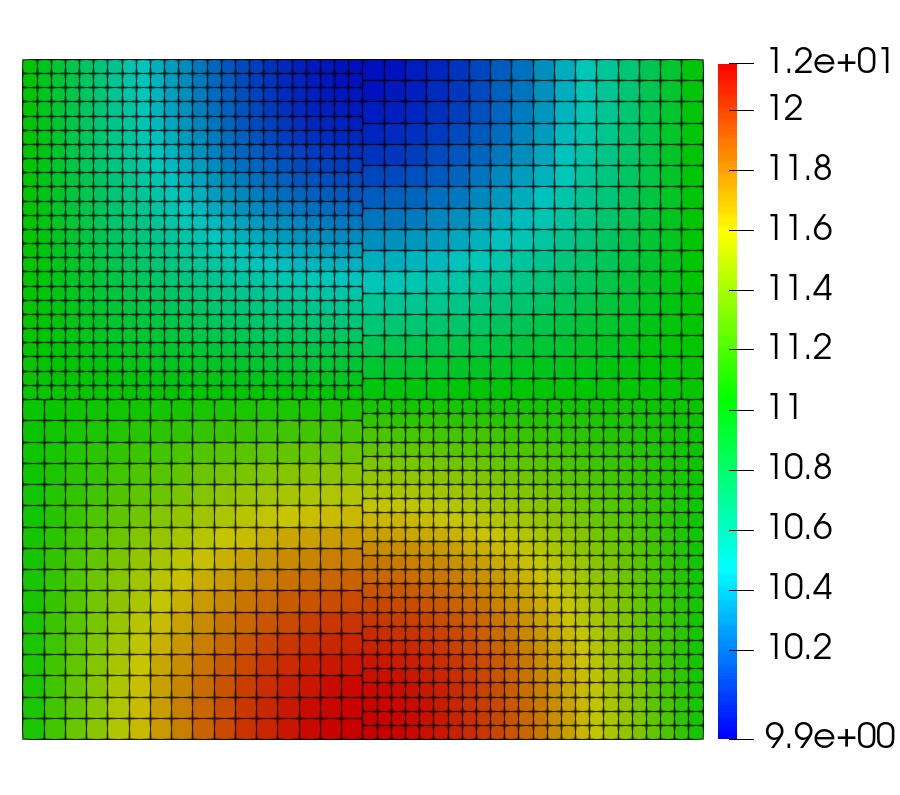}
	\caption{\label{fig:Example-1,-computed_exact_mortar}Example 1, computed solution
		at the final time step using a linear mortar on non-matching subdomain
		grids, $h=1/32$, $\Delta t=10^{-3}$ and $c_{0}=1.0$;
                top: $x$-stress (left), $y$-stress (middle), displacement (right); bottom: rotation (left), velocity (middle), pressure (right).}
\end{figure}

The numerical results observed in the tables are consistent with the theoretical
results from the previous sections. In particular,
we demonstrate the stability of the method over a $100$ time steps,
and Tables \ref{tab:Example-1-lin-conv} and \ref{tab:Example-1-quad-conv}
show convergence rates that follow from Theorem \ref{thm:Error-theorem}
and Table \ref{tab:Degree-poly-numerics}. With linear mortar ($m=1$) and $H=2h$, the interface error is ${\cal O}(h^{\frac{3}{2}})$. With quadratic mortar ($m=2$) and $H=\sqrt{h}$, the interface error is ${\cal O}(h^{\frac{5}{4}})$. In both the cases, it is dominated by the subdomain error, which is ${\cal O}(h)$. As a result, we expect at least ${\cal O}(h)$ convergence
in both cases, which is what we observe. The observed convergence rate for the errors $\|\sigma-\sigma_{h}\|_{L^{\infty}(L^{2})}$ and $\|z-z_{h}\|_{L^{\infty}(L^{2})}$ are close to ${\cal O}(h^2)$, which suggests that it may be possible to establish stress and velocity estimates that are independent of the approximation of the other variables. 

Comparison of the number
of interface iterations required in the case of linear and quadratic
mortars in Tables \ref{tab:Example-1-lin-conv} and \ref{tab:Example-1-quad-conv},
respectively shows that both mortar degrees result in similar accuracy
for the same level of subdomain mesh refinement. At the same time,
the quadratic mortar case requires fewer interface
iterations compared to the linear mortar case with the same level
of subdomain mesh refinement. This is due to the choice of a coarser
mortar mesh in the case of quadratic mortar case. This points towards
a way to decrease the number of interface iterations by using a coarser
mesh and higher mortar space degree, without a loss in accuracy.
Tables \ref{tab:Example-1-lin-conv_c0_small}$\--$\ref{tab:Example-1-quad-conv-small-c0}
indicate that the stability and convergence rates are not affected by smaller values of $c_{0}$, which is consistent with the theoretical bounds established in the previous sections.

\subsection{Example 2: heterogeneous medium}

In this example, we demonstrate the performance of the multiscale mortar method in a
practical application with highly heterogeneous medium. First, we
compare the accuracy and efficiency of the multiscale method with $H>h$
to a fine scale method with $H=h$. We then study the computational advantage
of using a multiscale stress--flux basis. We use
the porosity and the permeability data from the Society of Petroleum Engineers
10th Comparative Solution Project (SPE10)\footnote{https://www.spe.org/web/csp/datasets/set02.htm}.
The data are given on a $60\times220$ grid covering the rectangular region $(0,60)\times(0,220)$. We decompose the global domain into $3\times5$ subdomains consisting of identical rectangular blocks. The Young's modulus is obtained from the porosity field data using the relation
$E=10^{2}\left(1-\frac{\phi}{c}\right)^{2.1}$, where $c=0.5$ refers
to the porosity at which the Young's modulus vanishes, see \cite{kovavcik1999correlation}
for details. The input fields are presented in Figure~\ref{fig:Example-2,-input_fields_biot_mortar}.
The problem parameters and boundary conditions are given in Table~\ref{tab:Physical-Parameters_ex2_mortar} and the source terms are set to zero. These conditions describe a flow from left to right, driven by the gradient in the pressure. We use a compatible initial condition for pressure, $p_{0}=1-x$. To obtain 
discrete initial data, we set $p_{h}^{0}$ to be the $L^{2}$-projection of $p_{0}$ onto
$W_{h}$ and solve a mixed elasticity domain decomposition problem
at $t=0$ to obtain $\sigma_{h}^{0}$, $u_{h}^{0}$,
$\gamma_{h}^{0}$, and $\lambda_{H}^{u,0}$. 

We use a global $60\times220$ grid and solve the problem using both
fine scale ($H=h)$ and coarse $(H>h)$ mortar spaces. For the coarse
mortar case, we use both linear $(m=1)$ and quadratic $(m=2)$ mortars
with one and two mortar elements per subdomain interface. The comparison of
the computed solution using different choices of mortars is given
in Figures~\ref{fig:Example-2,-solution_plots-1}$\--$\ref{fig:Example-2,-solution_plots_last}.
The solution variables are very similar for all five cases, illustrating that the multiscale mortar method obtains comparable accuracy to the fine scale discretization, even in the case of the
coarsest mortar grid with one linear mortar per interface. On the other hand, the computational cost of the multiscale mortar method is smaller than the fine scale method. This is evident from Table \ref{tab:Example-2,-=000023GMRES_table}, where the number of GMRES iterations and subdomain solves are reported, noting that the number of subdomain solves dominates the computational complexity of the method. We observe that the multiscale mortar method requires much fewer number of GMRES iterations and solves compared to fine scale method. As a result, the multiscale mortar method obtains comparable accuracy to the fine scale method at a significantly reduced computational cost.

We further test the effect of using a multiscale stress--flux basis (MSB) on the computational cost of the method. If MSB is not used, the number of subdomain solves equals the total \#GMRES iterations across all time steps + $2\times$number of time steps,
where the last term comes from two extra solves required to solve the
system (\ref{eq:mortar-bar-1-1})$\--$(\ref{eq:mortar-star-5-1}) initially
and recovering the final solution after the interface GMRES converges. On the other hand, in the case of using MSB, total number of subdomain solves equals
the $\text{dim}(\Lambda_{H,i})$+$2\times$number of time steps. Note that the first term in the
cost in the case of no-MSB is directly proportional to the number of GMRES iterations and time steps, while the corresponding term in the case of MSB method is independent of both the number of GMRES iterations and the number of time steps, since the MSB can be reused over all time steps. Therefore the computational efficiency of the MSB for steady-state problems due to the independence of the global number of mortar degrees of freedom is further magnified by the number of time steps in the case of time-dependent problems.
These conclusions are illustrated in the last two columns of Table~\ref{tab:Example-2,-=000023GMRES_table}, where we observe that the number of solves in the case of no MSB is at least an order of magnitude larger than the MSB case. We conclude that 
the construction of MSB is an excellent tool to make the multiscale mortar method even more efficient than it already is compared to the fine scale method discussed in \cite{dd-biot}.

\begin{table}[h]
	\centering{}
	\caption{Example 2, parameters (left) and boundary conditions (right).\label{tab:Physical-Parameters_ex2_mortar}} \begin{tabular}{c|c}
		Parameter & Value\tabularnewline
		\hline 
		Mass storativity $(c_{0})$ & $1.0$\tabularnewline 
		Biot-Willis constant $(\alpha)$ & $1.0$\tabularnewline
		Time step $(\Delta t)$ & $10^{-3}$\tabularnewline 
		Total time $(T)$ & $0.1$\tabularnewline
	\end{tabular} % 
	\hspace{5 mm} 
	\begin{tabular}{c|c|c|c|c}
		Boundary & $\sigma$ & $u$ & $p$ & $z$\tabularnewline 
		\hline
		Left & $\sigma n=-\alpha p n$ & - & $1$ & -\tabularnewline
		Bottom & $\sigma n=0$ & - & - & $z\cdot n=0$\tabularnewline
		Right & $-$ & $0$ & $0$ & -\tabularnewline
		Top & $\sigma n=0$ & - & - & $z\cdot n=0$\tabularnewline
	\end{tabular}
\end{table}

\begin{table}[h]
	\caption{Example 2, \#GMRES iterations and maximum number of subdomain solves.\label{tab:Example-2,-=000023GMRES_table}}
	
	\centering{}%
	\begin{tabular}{|c|c|c|c|c|}
		\hline 
		mortar & Average \#GMRES & Total \#GMRES & \multicolumn{2}{c|}{Total \#Solves}\tabularnewline
		\hline 
		&  &  & No MSB & MSB\tabularnewline
		\cline{4-5} \cline{5-5} 
		linear fine scale & 343 & 34375 & 34575 & 968\tabularnewline
		1 linear per interface & 41 & 4149 & 4349 & 224\tabularnewline
		1 quadratic per interface & 61 & 6184 & 6384 & 236\tabularnewline
		2 linear per interface & 80 & 8010 & 8210 & 248\tabularnewline
		2 quadratic per interface & 123 & 12302 & 12502 & 272\tabularnewline
		\hline 
	\end{tabular}
\end{table}

\begin{figure}[h]
  \centering{}
  \includegraphics[width=0.31\columnwidth]{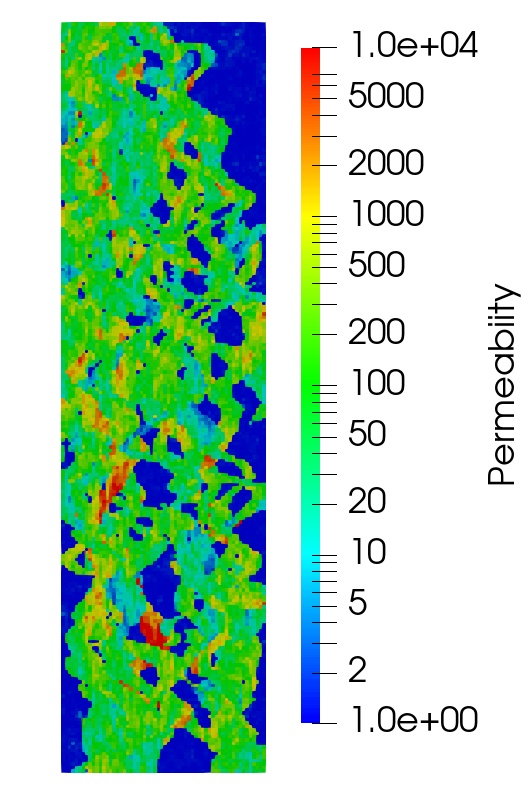}
  \includegraphics[width=0.31\columnwidth]{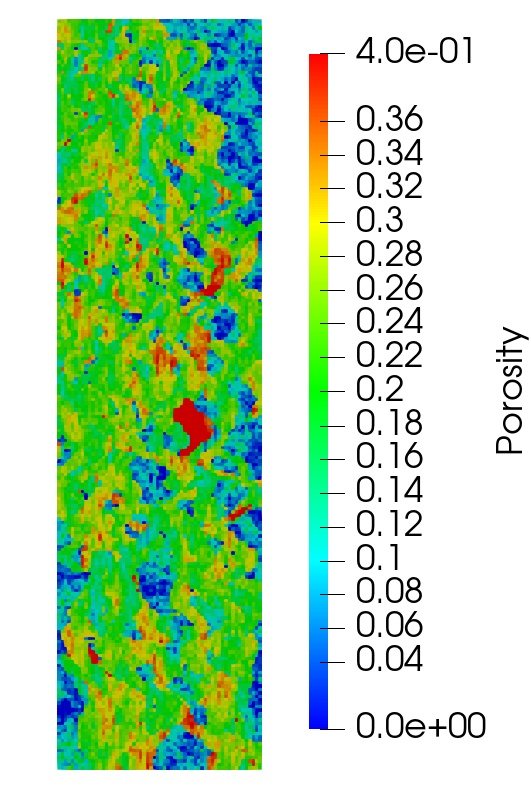}
  \includegraphics[width=0.31\columnwidth]{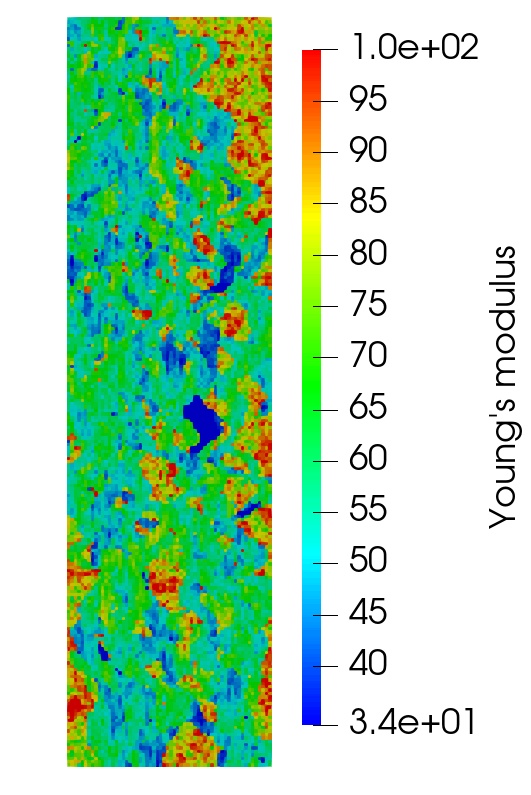}
  \caption{Example 2, permeability, porosity, and Young's modulus. \label{fig:Example-2,-input_fields_biot_mortar}}
\end{figure}

\begin{figure}[h]
  \centering{}
  \includegraphics[width=0.16\columnwidth]{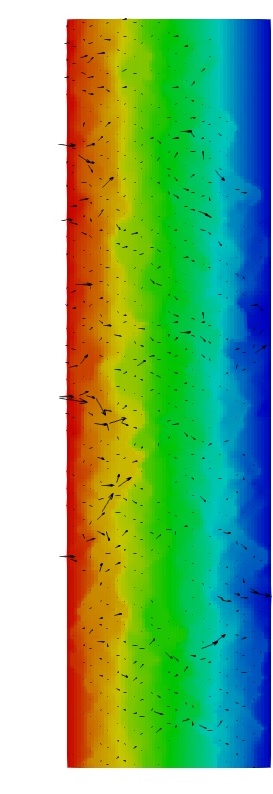}
  \includegraphics[width=0.16\columnwidth]{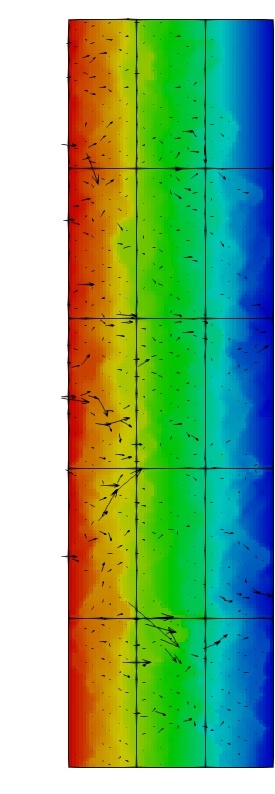}
  \includegraphics[width=0.16\columnwidth]{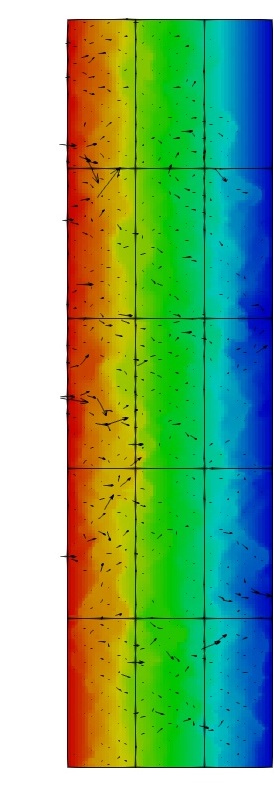}
  \includegraphics[width=0.16\columnwidth]{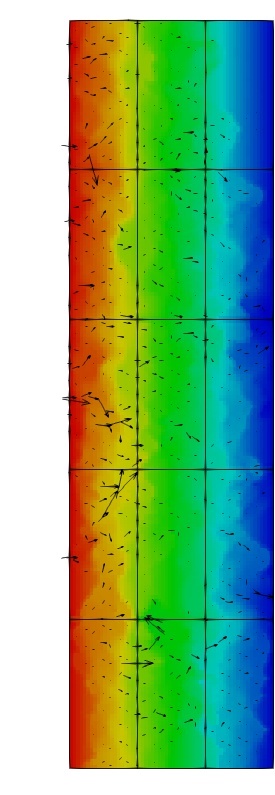}
  \includegraphics[width=0.16\columnwidth]{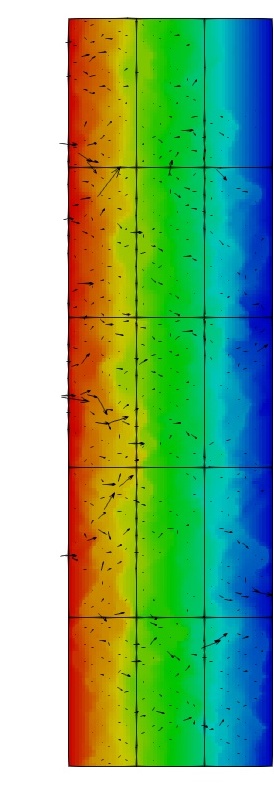}
  \includegraphics[width=0.15\columnwidth]{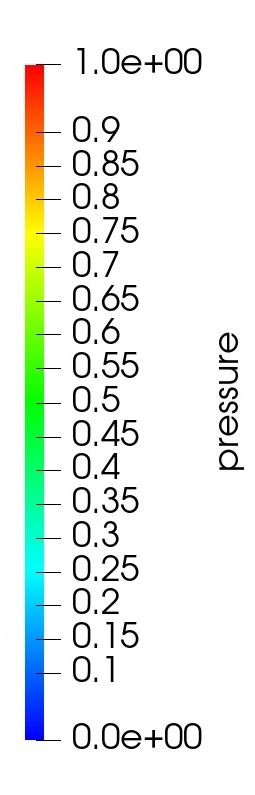}
  \caption{Example 2, pressure (color) and velocity (arrows); from left to right: fine scale, single linear mortar per interface, two linear
    mortars per interface, single quadratic mortar per interface (left), two quadratic mortars per interface.}
\label{fig:Example-2,-solution_plots-1}
\end{figure}

\begin{figure}[h]
    \centering{}
  \includegraphics[width=0.16\columnwidth]{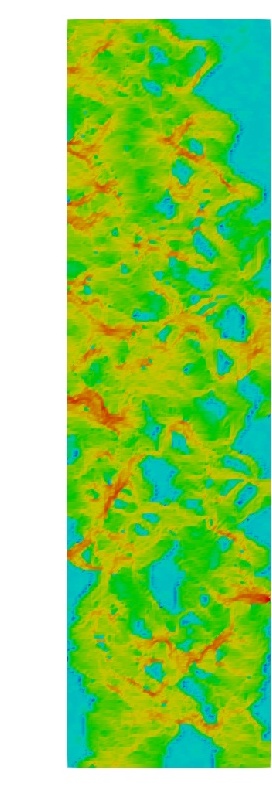}
  \includegraphics[width=0.16\columnwidth]{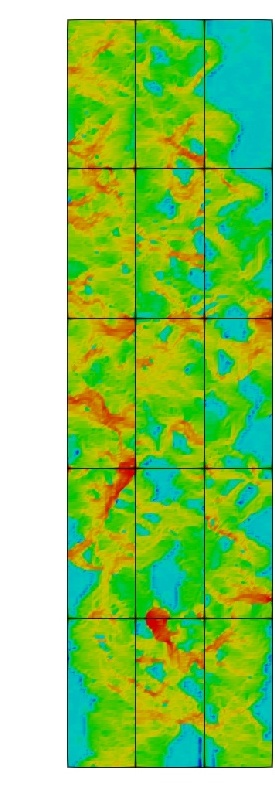}
  \includegraphics[width=0.16\columnwidth]{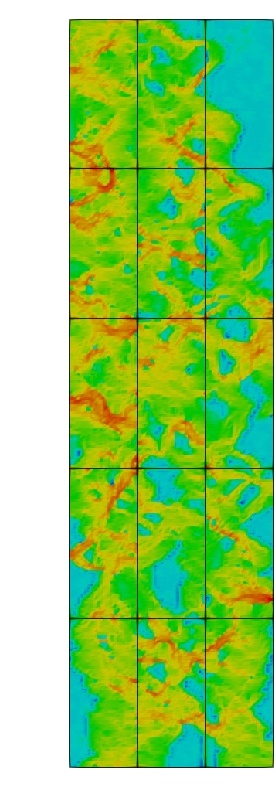}
  \includegraphics[width=0.16\columnwidth]{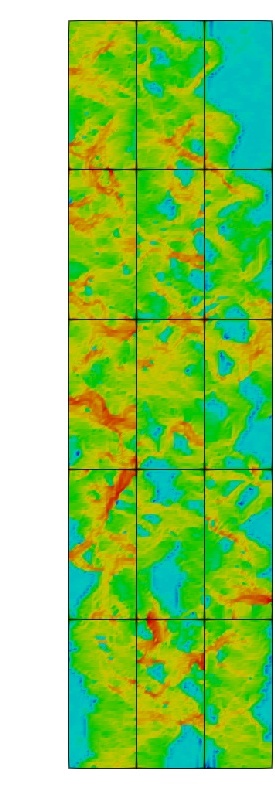}
  \includegraphics[width=0.16\columnwidth]{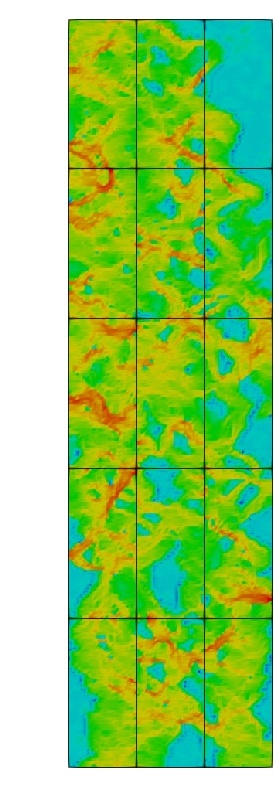}
  \includegraphics[width=0.15\columnwidth]{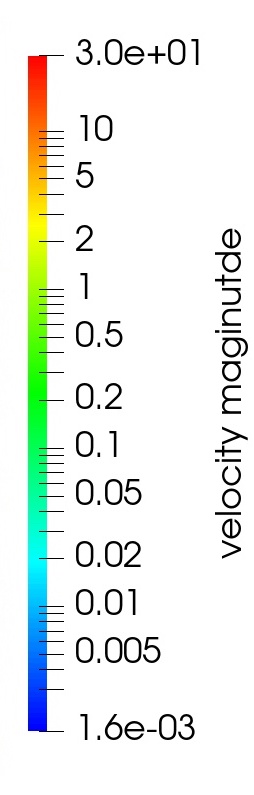}
  \caption{Example 2, velocity magnitude; from left to right: fine scale, single linear mortar per interface, two linear
mortars per interface, single quadratic mortar per interface (left), two quadratic mortars per interface.}
\end{figure}

\begin{figure}[h]
    \centering{}
  \includegraphics[width=0.16\columnwidth]{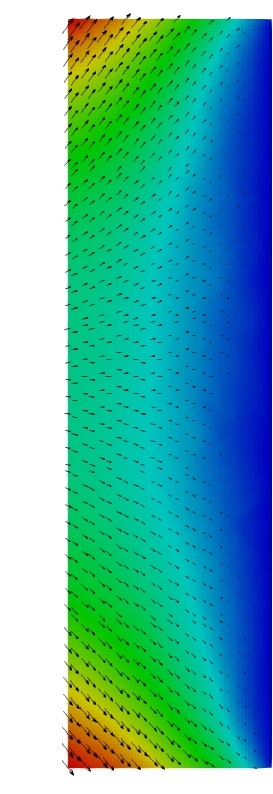}
  \includegraphics[width=0.16\columnwidth]{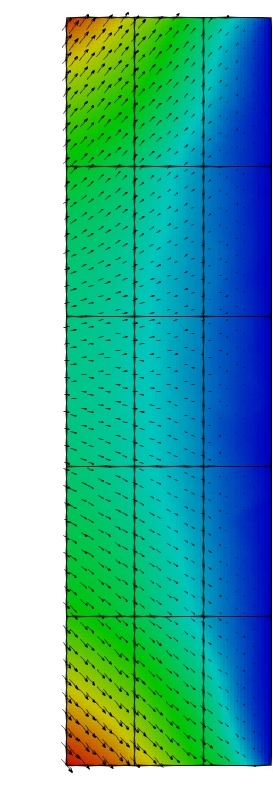}
  \includegraphics[width=0.16\columnwidth]{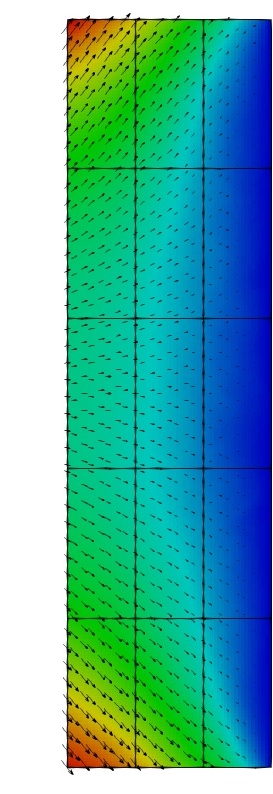}
  \includegraphics[width=0.16\columnwidth]{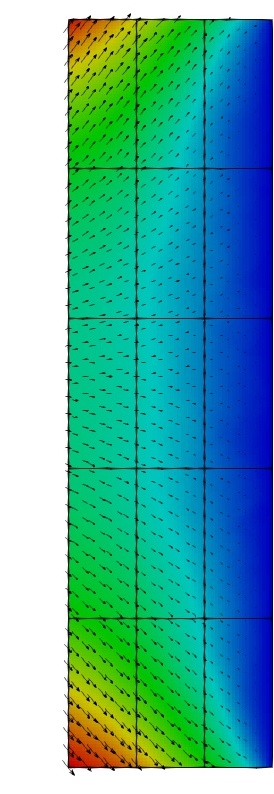}  
  \includegraphics[width=0.16\columnwidth]{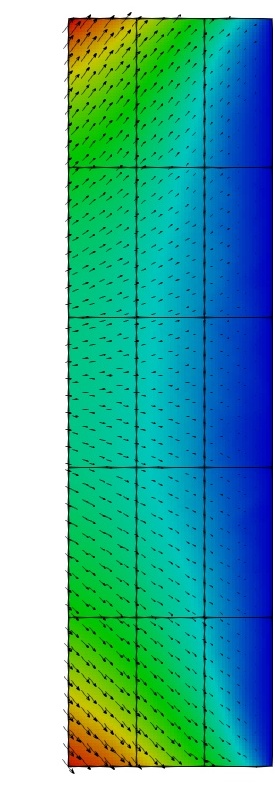}
  \includegraphics[width=0.15\columnwidth]{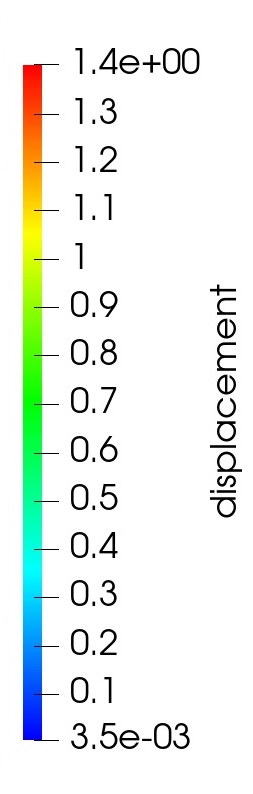}
  \caption{Example 2, displacement vector (arrows) and its magnitude; from left to right: fine scale, single linear mortar per interface, two linear
    mortars per interface, single quadratic mortar per interface (left), two quadratic mortars per interface.}
  \label{fig:Example-2,-solution_plots_last}
\end{figure}

\section{Conclusions}\label{sec:concl}

We presented a multiscale mortar mixed finite element method for the Biot system of poroelasticity in a five-filed fully mixed formulation. The method allows for non-matching subdomain grids at the interfaces, using a composite mortar Lagrange multiplier space that approximates the displacement and pressure on a (possibly coarse) mortar interface grid to impose weakly stress and flux continuity. We established the well-posedness of the method and carried out a multiscale a priori error analysis. The results are robust in the limit of small storativity coefficient. We further presented a non-overlapping domain decomposition algorithm based on a Schur complement reduction of the global system to a (coarse scale) mortar interface problem, which is solved with a Krylov space iterative method. Each iteration requires solving Dirichlet type subdomain problems, which can be performed in parallel. A series of numerical tests illustrates the stability and convergence properties of the method, as well as its computational efficiency. We observed, both theoretically and numerically, that fine scale order convergence can be obtained even for a coarse mortar mesh with a suitable choice of the mortar polynomial degree. An application of the method to a highly heterogeneous benchmark problem illustrates that the multiscale mortar method can achieve comparable accuracy to the fine scale method at a highly reduced computational cost. Moreover, the use of a pre-computed multiscale stress--flux basis further increases the efficiency, making the computational cost independent of the global number of interface degrees of freedom and weakly dependent on the number of time steps.

Several extensions of the presented work are possible. These include combining the multiscale mortar techniques developed here with splitting methods for the Biot system of poroelasticity studied, e.g., in \cite{Ahmed-FS-JCAM,YiSplit,RaduFS1,RaduFS2,RaduFS3,Almani-multirate,dd-biot}, as well as asynchronous and adaptive time stepping using space-time \cite{RaduFS1,space-time,spacetime-SD}, parallel-in-time \cite{RaduFS2}, a posteriori error estimation \cite{Ahmed-FS-JCAM,Ahmed-apost-CMAME}, and multirate \cite{Ahmed-FS-JCAM,Almani-multirate} techniques.

\bibliographystyle{abbrv}
\addcontentsline{toc}{section}{\refname}\bibliography{BiotMortar}

\end{document}